\newtheorem{theorem}{Theorem}[section]
\newtheorem{lemma}[theorem]{Lemma}
\newtheorem{remark}[theorem]{Remark}
\numberwithin{equation}{section}
\title[The generalized Kadomtsev-Petviashvili equation]{Existence and multiplicity of normalized solutions for the generalized Kadomtsev-Petviashvili equation in $\mathbb{R}^2$}
\author[C.O. Alves]{Claudianor O. Alves}
\address[C.O. Alves]{\newline\indent
    Unidade Acad\^{e}mica de Matem\'{a}tica
    \newline\indent
    Universidade Federal de Campina Grande
    \newline\indent
    PB CEP:58429-900, Brazil}
\email{\href{mailto: coalves@mat.ufcg.edu.br}{coalves@mat.ufcg.edu.br}}
\author[R. Ding]{Rui Ding}
\address[R. Ding]{\newline\indent
    School of Mathematics
    \newline\indent
    East China University of Science and Technology
    \newline\indent
    Shanghai 200237, PR China }
\email{\href{mailto:dingrui18363922468@outlook.com }{dingrui18363922468@outlook.com }}
\author[C. Ji]{Chao Ji}
\address[C. Ji]{\newline\indent
    School of Mathematics
    \newline\indent
    East China University of Science and Technology
    \newline\indent
    Shanghai 200237, PR China }
\email{\href{mailto:jichao@ecust.edu.cn}{jichao@ecust.edu.cn}}
\newcommand\R{\mathbb R}
\newcommand\N{\mathbb N}
\subjclass[2020]{35A15, 35A18.}
\date{\today}
\keywords{The generalized Kadomtsev-Petviashvili equation, Normalized solutions, The Gagliardo-Nirenberg inequality, Variational methods}
\begin{document}
\maketitle

    \begin{abstract}
        In this paper, we study the existence and {multiplicity} of nontrivial solitary waves for the generalized Kadomtsev-Petviashvili equation with prescribed {$L^2$-norm}
\begin{equation*}\label{Equation1}
    \left\{\begin{array}{l}
        \left(-u_{x x}+D_x^{-2} u_{y y}+\lambda u-f(u)\right)_x=0,{\quad x \in \mathbb{R}^2, } \\[10pt]
    \displaystyle   \int_{\mathbb{R}^2}u^2 d x=a^2,
    \end{array}\right.
\end{equation*}
where  $a>0$ and  $\lambda \in \mathbb{R}$ is an unknown parameter that appears as a Lagrange multiplier. For the case $f(t)=|t|^{q-2}t$, with $2<q<\frac{10}{3}$ ($L^2$-subcritical case) and $\frac{10}{3}<q<6$ ($L^2$-supercritical case), we establish the existence of normalized ground state solutions for the above equation. Moreover, when $f(t)=\mu|t|^{q-2}t+|t|^{p-2}t$, with $2<q<\frac{10}{3}<p<6$ and $\mu>0$,
we prove the existence of normalized ground state solutions  which corresponds to a local minimum of the associated energy
functional. In this case, we further show that there exists a sequence $(a_n) \subset (0,a_0)$ with $a_n \to 0$ as $n \to+\infty$, such that for each $a=a_n$, the problem admits a second solution with positive energy.  To the best of our knowledge, this is the first work that studies the existence of solutions for the generalized Kadomtsev-Petviashvili equations under the $L^2$-constraint, which we refer to them as the normalized solutions.
\end{abstract}
\thispagestyle{empty}

\noindent
\section{Introduction}
This paper concerns the existence and multiplicity of nontrivial solitary waves to the following
generalized Kadomtsev-Petviashvili equation with a prescribed {$L^2$-norm}
\begin{equation}\label{Equation}
    \left\{\begin{array}{l}
        \left(-u_{x x}+D_x^{-2} u_{y y}+\lambda u-f(u)\right)_x=0,{\quad (x,y) \in \mathbb{R}^2, } \\[10pt]
    \displaystyle   \int_{\mathbb{R}^2}|u|^2 d x=a^2,
    \end{array}\right.
\end{equation}
where $a>0$, $\lambda \in \mathbb{R}$ is unknown and will arise as a Lagrange multiplier and $D_x^{-1} h(x, y)=\int_{-\infty}^x h(s, y) d s$. Throughout this paper, we refer to solutions of this type as normalized solutions. Equation \eqref{Equation} arises when seeking solutions with a prescribed {$L^2$-norm} for the generalized Kadomtsev-Petviashvili equation
\begin{equation}\label{GKPN=2}
    \psi_t+\psi_{x x x}+(f(\psi))_x=D_x^{-1} \psi_{yy},
\end{equation}
where $(t, x, y) \in \mathbb{R}^{+} \times \mathbb{R} \times \mathbb{R}$. {This equation is a special case of the following equation
\begin{equation}\label{GKP}
        \psi_t+\psi_{x x x}+(f(\psi))_x=D_x^{-1} \Delta_y \psi,
\end{equation}
where $(t, x, y) \in \mathbb{R}^{+} \times \mathbb{R} \times \mathbb{R}^{N-1}$ and $\Delta_y=\sum_{i=1}^{N-1} \frac{\partial^2}{\partial y_i^2}$, $N \geq 2$. When $N=2$, equation \eqref{GKP} reduces to \eqref{GKPN=2}.

We are interested, in particular, in the existence of a {\it solitary wave} for \eqref{GKP}.
A solitary wave is a solution of the form {$\psi(t, x, y) = u(x-\lambda t, y)$, where $\lambda \in \mathbb{R}$ and $u: \mathbb{R}^N \rightarrow \mathbb{R}$ is a time-independent function}.
 Substituting in \eqref{GKP}, we obtain
$$
u_{x x x}-\lambda u_x+(f(u))_x=D_x^{-1} \Delta_y u, \quad x\in \mathbb{R}^N,
$$
or, equivalently,
\begin{equation}\label{Equation-l}
\left(-u_{x x}+D_x^{-2} \Delta_y u+\lambda u-f(u)\right)_x=0, \quad x\in \mathbb{R}^N.
\end{equation}

A possible choice is then to fix $\lambda \in \mathbb{R}$, and to search for solutions to \eqref{Equation-l}.
For this class of problems, we first mention the pioneering work
due to  De Bouard and Saut \cite{DS0, DS}, which treated a nonlinearity {$f(t)=\frac{1}{p+1} t^{p+1}$} assuming that $p=\frac{m}{n}$, with $m$ and $n$ relatively prime, and $n$ is odd and $1 \leq p<4$, if $N=2$, or $1 \leq p<\frac{4}{3}$, if $N=3$. In \cite{DS0, DS}, De Bouard and Saut obtained existence and nonexistence of solitary waves by using constrained minimization method and concentration-compactness principle \cite{LionsCCPLocal2}. Willem \cite{Willem} extended the results of \cite{DS0} to the case \( N = 2 \), where \( f(u) \) is a continuous function satisfying certain structural conditions, by applying the mountain pass theorem.
Then Wang and Willem \cite{WW} obtained multiple solitary waves in one-dimensional spaces via the Lyusternik-Schnirelman category theory.  In \cite{LS}, by using the variational method, Liang and Su proved the existence of nontrivial solitary waves for equation \eqref{Equation-l} with
$f(x, y, u)=Q(x, y)\vert u\vert^{p-2}u$ and $N\geq 2$, where $Q\in C(\mathbb{R}\times\mathbb{R}^{N-1}, \mathbb{R})$ satisfying some assumptions and $2<p<N^{*}=\frac{2(2N-1)}{2N-3}$, while Xuan \cite{Xu} dealt with the case where $f(u)$ satisfies some superlinear conditions in higher dimension. By applying the linking theorem established in \cite{SZ}, He and Zou \cite{HZ} studied existence of nontrivial solitary waves for the generalized Kadomtsev-Petviashvili equation in multi-dimensional spaces. Similar results can also be found in \cite{Z}.

Recently, Alves and Miyagaki in \cite{AM} established some results concerning the existence, regularity and concentration
phenomenon of nontrivial solitary waves for a class of generalized variable
coefficient Kadomtsev-Petviashvili equations in $\mathbb{R}^2$. After that, Figueiredo and Montenegro \cite{FM} proved the existence of multiple solitary waves for a generalized Kadomtsev-Petviashvili equation with a potential in $\mathbb{R}^2$. In \cite{FM}, the authors showed that the number of solitary waves corresponds to the number of global minimum points of the potential when a parameter is small enough. {In \cite{AJ}, Alves and Ji studied the existence and concentration of the nontrivial solitary waves for the generalized Kadomtsev-Petviashvili equation in $\mathbb{R}^2$ when the potential satisfies a local assumption due to  del Pino and Felmer \cite{DF}. For further results on the generalized Kadomtsev-Petviashvili equations,   we refer the reader to }\cite{AMP, AGM, BIN, B, F, GW, HNS, IM, KP1, S, SZ, TG, T, WW, XWD, ZXM} and the references therein.
While most existing works focused on solutions to \eqref{Equation-l} with fixed \( \lambda \in \mathbb{R} \), it is also meaningful to consider solutions to \eqref{Equation-l} with a prescribed \( L^2 \)-norm. In this setting, the parameter \( \lambda \) is unknown and arises as a Lagrange multiplier.
    We refer to such solutions as \emph{normalized solutions}. To the best of our knowledge, this is the first work that studies normalized solutions for the generalized Kadomtsev--Petviashvili equation. In particular, we focus on the case $N=2$ since the Pohozaev identity plays a crucial role in our analysis, while for $N\geq3$, whether the solutions satisfy the Pohozaev identity depends on their regularity, which is difficult to verify.
From a physical point of view, we would like to point out that solutions \( \psi(t,x,y) \) to \eqref{GKPN=2} satisfy the conservation of momentum (see \cite{SYM}, for instance), specifically,
    \[
    |\psi(t,\cdot,\cdot)|_2^2 = \int_{\mathbb{R}^2} |\psi(0,\cdot,\cdot)|^2\, dx\,dy, \quad \forall t \in \mathbb{R}.
    \]
When $f(t) = \frac{1}{2}t^2$, equation \eqref{GKPN=2} reduces to the Kadomtsev--Petviashvili I(KP-I), which models the propagation of weakly nonlinear dispersive long waves on the surface of a fluid, where the wave motion is predominantly one-dimensional with weak transverse effects along the $y$-axis (see \cite{KP1, PetviashviliYankov}).
    In this context, the momentum is conserved, reflecting a fundamental physical law.
This naturally leads to the important question of whether equation \eqref{Equation} admits solutions with a prescribed
$L^2$-norm, which can be found by seeking critical points of the associated energy functional
\begin{equation*}
    I(u)=\frac{1}{2}\int_{\mathbb{R}^{2}} \left(|u_{x}|^{2} + |D^{-1}_{x} u_{y}|^{2} \right)dx dy-\int_{\mathbb{R}^2} F(u) \,dxdy
\end{equation*}
on the constraint
$$
S(a)=\left\{u\in X, \int_{\mathbb{R}^{2}}|u|^{2}dxdy=a^{2}\, \right\}
$$
for the definition of space $X$, please see Section \ref{sec2}. Over the past decades, the normalized solutions have been extensively studied for the following nonlinear Schr\"{o}dinger equation
\begin{equation}\label{Equation-e}
    \left\{\begin{array}{l}
        -\Delta u=\lambda u+g(u), \quad x\in \mathbb{R}^N, \\[5pt]
    \displaystyle \int_{\mathbb{R}^N}|u|^2 d x=a^2.
    \end{array}\right.
\end{equation}
Equation \eqref{Equation-e} arises when one looks for solutions with prescribed {$L^2$-norm} for the nonlinear Schr\"{o}dinger equation
\begin{equation}\label{NOS}
i \frac{\partial \psi}{\partial t}+\Delta \psi+h\left(|\psi|^2\right) \psi=0, \quad x\in \mathbb{R}^N,
\end{equation}
where $h\left(|u|^2\right) u=g(u)$.
A stationary wave solution is a solution of the form $\psi(t, x)=e^{-i \lambda t} u(x)$, where $\lambda \in \mathbb{R}$ and $u: \mathbb{R}^N \rightarrow \mathbb{R}$ is a time-independent function that must solve the elliptic problem
\begin{equation}\label{Equation-e1}
-\Delta u=\lambda u+g(u), \quad x\in \mathbb{R}^N.
\end{equation}
 For some values of $\lambda$,  the existence of nontrivial solutions for \eqref{Equation-e1} are obtained as the critical points of the action functional $E_\lambda: H^1\left(\mathbb{R}^N\right) \rightarrow \mathbb{R}$ given by
$$
E_\lambda(u)=\frac{1}{2} \int_{\mathbb{R}^N}\left(|\nabla u|^2-\lambda |u|^2\right) d x-\int_{\mathbb{R}^N} G(u) d x
$$
where $G(t)=\int_0^t g(s) d s$.
Another important way to find the nontrivial solutions for \eqref{Equation-e1} is to search for solutions with prescribed $L^2$-norm, in which case $\lambda \in \mathbb{R}$ appears as part of the unknown. This approach seems to be particularly meaningful from the physical point of view, since it gives a better insight of the properties of the stationary solutions for \eqref{NOS}, such as stability or instability(see \cite{CL}). In particular, when focusing on the case $g(u)=|u|^{q-2} u$, the associated energy functional is given by
\begin{equation}\label{Eq-Eu}
E(u)=\frac{1}{2} \int_{\mathbb{R}^N}|\nabla u|^2d x-\int_{\mathbb{R}^N} |u|^qd x.
\end{equation}
We recall that the $L^2$-critical exponent
$\bar{q}:= 2 + \frac{4}{N}$, which is generated from the Gagliardo-Nirenberg inequality (see \cite{Cazenave}) and plays a special role.
In the $L^2$-subcritical case, namely $q\in(2,\bar{q})$, the corresponding energy functional on the constraint is coercive and bounded from below. Hence one can obtain the existence of a global minimizer by minimizing on the $L^2$-sphere, {cf.} \cite{Shibata}. In the $L^2$-supcritical case, that is $q\in(\bar{q},2^*)$, the functional on the $L^2$-sphere could not be bounded from below. {To prove the boundedness of the corresponding Palais-Smale sequence,} Jeanjean \cite{Jean} employed a mountain pass structure for an auxiliary functional. More precisely, he studied the following equation
$$
\left\{\begin{array}{l}
    -\Delta u=\lambda u+g(u), \quad x\in \mathbb{R}^N, \\[5pt]
\displaystyle   \int_{\mathbb{R}^N}|u|^2 d x=a^2,
\end{array}\right.
$$
with $N \geq 2$, where function $g: \mathbb{R} \rightarrow \mathbb{R}$ is an odd continuous function with subcritical growth that satisfies some technical conditions. One of these conditions is the following: $\exists(\alpha, \beta) \in \mathbb{R} \times \mathbb{R}$ satisfying
$$
\begin{cases}\frac{2 N+4}{N}<\alpha \leq \beta<\frac{2 N}{N-2}, & \text { for } N \geq 3, \\ \frac{2 N+4}{N}<\alpha \leq \beta, & \text { for } N=1,2,\end{cases}
$$
such that
$$
\alpha G(s) \leq g(s) s \leq \beta G(s) \quad \text { with } \quad G(s)=\int_0^s g(t) d t.
$$
An example of a function $g$ that satisfies the above condition is $g(s)=|s|^{q-2} s$ with $q \in\left(\bar{q}, 2^*\right)$ for $N \geq 3$. To address the lack of compactness of the Sobolev embedding on the whole space $\mathbb{R}^N$, the author worked within the radially symmetric subspace $H_{\text{rad}}^1\left(\mathbb{R}^N\right)$ to recover some compactness. For further results on normalized solutions of nonlinear Schr\"{o}dinger equations, we refer the reader to \cite{A21, AS2, CCM, AJ21, AlvesThin, Bartschmolle, BartschSaove,ThomaseNicola, valerio, Bartosz, BellazziniJeanjeanLuo, Cazenave, Jun, JeanjeanJendrejLeVisciglia, JeanjeanLu, JeanjeanLu2020,JeanjenaLu2, JeanjeanLe, Nicola1} and the references therein.

Inspired by the discussions above, we will first establish a Gagliardo-Nirenberg inequality suitable for the generalized Kadomtsev--Petviashvili equation (see Lemma \ref{L1} in Section \ref{sec2}). This inequality plays a crucial role in the variational analysis of normalized solutions for generalized Kadomtsev-Petviashvili equation in $\mathbb{R}^2$. Based on the Gagliardo-Nirenberg inequality and {setting the following special scaling}
    \[
    \mathcal{H}(u,t) = e^{t} u(e^{\frac{2}{3}t}x, e^{\frac{4}{3}t}y).
    \]
In analogy with the nonlinear Schr\"{o}dinger equations, when $ f(t)=|t|^{q-2}t $ in \eqref{Equation}, we define the $L^2$-critical exponent
for  Kadomtsev-Petviashvili equations in $\mathbb{R}^2$ as $q=\frac{10}{3}$. The case $q \in (2, \frac{10}{3})$ corresponds to the $L^2$-subcritical case, in which the energy functional under the $L^2$-constraint is bounded from below, and the case $q \in (\frac{10}{3}, 6)$ corresponds to  the $L^2$-supercritical case, where the energy functional under the $L^2$-constraint is unbounded from below.

{One of the main difficulties in studying normalized solutions of the generalized Kadomtsev-Petviashvili equations in $\mathbb{R}^2$ is the lack of the compactness. For the nonlinear Schr\"{o}dinger equations, this difficulty is often overcome by working within the radially symmetric subspace $H_{\text{rad}}^1\left(\mathbb{R}^N\right)$, which helps recover compactness (see \cite{Jean}). However, this approach is not applicable to the generalized Kadomtsev-Petviashvili equations in $\mathbb{R}^2$   by the following facts:
\begin{itemize}
	\item[(i)] If  $X_{\text{rad}}$ denotes the subspace of functions in $X$ that are radially symmetric with respect to 0, it is unclear whether  the embedding $X_{\text{rad}} \hookrightarrow L^{p}(\mathbb{R}^2)$ is compact for $p \in (2,6)$.
	\item[(ii)] If $u \in X_{\text{rad}}$, the scaling function $\mathcal{H}(u, t)(x, y) = e^{t}u(e^{\frac{2}{3} t}x, e^{\frac{4}{3} t}y)$ may not be radially symmetric.
	\item[(iii)] If $u \in X$ and $g \in O(2)$, we don't know if $\|g.u\|=\|u\|$, where $O(2)$ denotes  the orthogonal group of dimension 2,  $g.u(x,y):=u(g(x,y))$ for all $(x,y) \in \mathbb{R}^2$ and $\|\cdot \|$ is the norm of $X$ defined in Section 2. This property is crucial in order to apply the Palais' Principle of symmetric criticality, see Willem \cite[Chapter 1, Section 1.6]{Willem}.
\end{itemize}
To address this issue, we borrow the ideas developed by Jeanjean \cite{Jean} and provide some variational characterization of the mountain pass level.

{Our main results are as follows:}
\begin{theorem}\label{Th1}
    Assume that $ f(t)=|t|^{q-2}t $. If $2<q<\frac{10}{3}$, for any $a>0$, problem \eqref{Equation} admits a couple $\left({u}, \lambda\right) \in S(a) \times \R$ of weak solutions and $\lambda<0$. {In addition, ${u}$ is a normalized ground state solution of \eqref{Equation}}.
\end{theorem}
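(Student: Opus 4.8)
The plan is to obtain $u$ as a global minimizer of $I$ on the constraint $S(a)$ and to recover $\lambda$ as the associated Lagrange multiplier. Set $m(a):=\inf_{u\in S(a)}I(u)$, and write $\|u\|^2=\int_{\mathbb{R}^2}(|u_x|^2+|D_x^{-1}u_y|^2)$ for the $X$-norm. First I would invoke the Gagliardo--Nirenberg inequality of Lemma~\ref{L1}, which for $f(t)=|t|^{q-2}t$ gives an estimate of the form $\int_{\mathbb{R}^2}|u|^q\le C\|u\|^{\alpha}|u|_2^{\,q-\alpha}$ with $\alpha=\tfrac{3(q-2)}{2}$. Since $2<q<\tfrac{10}{3}$ is exactly the condition $\alpha<2$, on $S(a)$ one gets $I(u)\ge \tfrac12\|u\|^2-\tfrac{C}{q}a^{\,q-\alpha}\|u\|^{\alpha}$, so $I$ is coercive and bounded from below on $S(a)$; in particular $m(a)>-\infty$ and every minimizing sequence is bounded in $X$.

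Next I would check $m(a)<0$ by testing with the scaling $\mathcal H(u,t)(x,y)=e^{t}u(e^{\frac23 t}x,e^{\frac43 t}y)$, which preserves the $L^2$-norm. A direct computation gives $\|\mathcal H(u,t)\|^2=e^{\frac43 t}\|u\|^2$ and $\int_{\mathbb{R}^2}|\mathcal H(u,t)|^q=e^{(q-2)t}\int_{\mathbb{R}^2}|u|^q$, so
\[
I(\mathcal H(u,t))=\tfrac12 e^{\frac43 t}\|u\|^2-\tfrac1q e^{(q-2)t}\int_{\mathbb{R}^2}|u|^q .
\]
Because $q-2<\tfrac43$, factoring out $e^{(q-2)t}$ shows the bracketed term becomes negative as $t\to-\infty$, whence $I(\mathcal H(u,t))<0$ for $t$ sufficiently negative and $m(a)<0$.

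The main obstacle is compactness: as stressed in the introduction, radial symmetrization is not available here, so I would instead run the concentration--compactness principle for the densities $|u_n|^2$ along a minimizing sequence. The decisive ingredient is strict subadditivity of $\rho\mapsto m(\rho)$ with $\rho=a^2$. To obtain it, apply the amplitude scaling $w=\sqrt{\theta}\,u$ with $\theta>1$: since $I(u_n)\to m(\rho)<0$ forces $\liminf_n\int_{\mathbb{R}^2}|u_n|^q\ge -q\,m(\rho)>0$, the identity $I(\sqrt\theta u_n)=\theta I(u_n)-\tfrac{\theta(\theta^{q/2-1}-1)}{q}\int_{\mathbb{R}^2}|u_n|^q$ yields $m(\theta\rho)\le \theta^{q/2}m(\rho)<\theta m(\rho)$, and the standard rescaling argument then gives $m(\rho)<m(\rho_1)+m(\rho-\rho_1)$ for all $0<\rho_1<\rho$. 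This rules out dichotomy, while vanishing is excluded because it would force $\int_{\mathbb{R}^2}|u_n|^q\to 0$ and hence $\liminf_n I(u_n)\ge 0>m(a)$; the exclusion of vanishing relies on a Lions-type lemma in the anisotropic space $X$, using the embedding $X\hookrightarrow L^q(\mathbb{R}^2)$ for $q\in(2,6)$. Consequently the minimizing sequence is, up to translations in $(x,y)$, relatively compact, with limit $u\in S(a)$ satisfying $I(u)=m(a)$; weak lower semicontinuity of $\|\cdot\|$ together with $L^q$-convergence then upgrades this to strong convergence in $X$.

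Finally, $u$ being a constrained minimizer, the Lagrange multiplier rule produces $\lambda\in\mathbb{R}$ so that $(u,\lambda)$ is a weak solution of \eqref{Equation}. Testing the equation with $u$ gives $\|u\|^2-\int_{\mathbb{R}^2}|u|^q=\lambda a^2$, while $I(u)=\tfrac12\|u\|^2-\tfrac1q\int_{\mathbb{R}^2}|u|^q=m(a)<0$ forces $\int_{\mathbb{R}^2}|u|^q>\tfrac{q}{2}\|u\|^2>\|u\|^2$; subtracting yields $\lambda a^2=\|u\|^2-\int_{\mathbb{R}^2}|u|^q<0$, so $\lambda<0$. Since $u$ realizes the infimum of $I$ over all of $S(a)$ and every normalized solution lies on $S(a)$, $u$ has least energy among normalized solutions and is therefore a normalized ground state. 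I expect the technical heart of the argument to be the strict subadditivity together with the exclusion of vanishing in the anisotropic space $X$, since these replace the compactness that radial symmetry would otherwise provide.
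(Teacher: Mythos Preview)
Your proposal is correct and follows essentially the same route as the paper: Gagliardo--Nirenberg gives coercivity of $I$ on $S(a)$ (Lemma~\ref{L2}), the anisotropic scaling $\mathcal H(u,t)$ yields $m(a)<0$ (Lemma~\ref{L12}), the amplitude scaling $u\mapsto \xi u$ gives the strict subadditivity $m(\theta\rho)<\theta m(\rho)$ (Lemma~\ref{L3}), vanishing is excluded via the Lions-type lemma in $X$ together with $m(a)<0$, and dichotomy is ruled out by subadditivity (Lemma~\ref{C2}). Your explicit verification that $\lambda<0$---using $\lambda a^2=\|u\|^2-\int|u|^q$ and $I(u)<0$ to force $\int|u|^q>\|u\|^2$---is a point the paper's proof section actually leaves implicit, so your write-up is in that respect more complete.
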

{\begin{theorem}\label{Th-1}
        Assume that $ f(t)=|t|^{q-2}t $. If $q=\frac{10}{3}$, there exists $a^*>0$,
        such that for $a\in(0,a^*]$, problem \eqref{Equation} has no nontrivial solution.
\end{theorem}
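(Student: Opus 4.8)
The strategy is to confront a scaling (Pohozaev-type) identity satisfied by every solution with the critical Gagliardo--Nirenberg inequality of Lemma \ref{L1}. Writing $\|u\|_D^2:=\int_{\mathbb{R}^2}\big(|u_x|^2+|D_x^{-1}u_y|^2\big)\,dxdy$ for the kinetic energy, the point is that in the $L^2$-critical regime $q=\frac{10}{3}$ the terms $\|u\|_D^2$ and $\int_{\mathbb{R}^2}|u|^{q}\,dxdy$ scale identically under $\mathcal{H}$, so the two estimates collapse into a bare numerical inequality for $a$ that cannot hold once $a$ is small.

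Assume, towards a contradiction, that for some $a$ there is a nontrivial solution $(u,\lambda)\in S(a)\times\mathbb{R}$ of \eqref{Equation}. Because $\mathcal{H}(u,t)=e^{t}u(e^{\frac23 t}x,e^{\frac43 t}y)$ preserves the $L^2$-norm, the curve $t\mapsto\mathcal{H}(u,t)$ lies in $S(a)$; since the equation says exactly that $u$ is a critical point of $I$ constrained to $S(a)$, the scalar function $t\mapsto I(\mathcal{H}(u,t))$ is stationary at $t=0$. Feeding the homogeneities $\|\mathcal{H}(u,t)\|_D^2=e^{\frac43 t}\|u\|_D^2$ and $\int_{\mathbb{R}^2}|\mathcal{H}(u,t)|^{q}\,dxdy=e^{(q-2)t}\int_{\mathbb{R}^2}|u|^{q}\,dxdy$ into $I$ and differentiating at $t=0$ produces the Pohozaev-type identity
$$\frac{2}{3}\,\|u\|_D^2=\frac{q-2}{q}\int_{\mathbb{R}^2}|u|^{q}\,dxdy,$$
which for $q=\frac{10}{3}$ becomes $\int_{\mathbb{R}^2}|u|^{10/3}\,dxdy=\frac53\,\|u\|_D^2$. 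The Lagrange multiplier has dropped out, so the sign of $\lambda$ is irrelevant here.

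I would then invoke Lemma \ref{L1}, which for $q=\frac{10}{3}$ reads $\int_{\mathbb{R}^2}|u|^{10/3}\,dxdy\le C_{GN}\,\|u\|_D^{2}\,\|u\|_2^{4/3}=C_{GN}\,a^{4/3}\,\|u\|_D^2$. Comparing with the identity above gives $\frac53\,\|u\|_D^2\le C_{GN}\,a^{4/3}\,\|u\|_D^2$. A nontrivial $u$ satisfies $\|u\|_D^2>0$ (if $\|u\|_D=0$ then $u_x=0$, so $u$ is independent of $x$ and cannot lie in $L^2(\mathbb{R}^2)$ unless $u\equiv0$), so dividing yields $\frac53\le C_{GN}\,a^{4/3}$, i.e. $a\ge\big(\tfrac{5}{3C_{GN}}\big)^{3/4}$. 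This contradiction establishes nonexistence for every $a$ strictly below $a^{*}:=\big(\tfrac{5}{3C_{GN}}\big)^{3/4}$; to reach the closed interval $(0,a^{*}]$ I would additionally treat $a=a^{*}$ by analyzing the equality case of Lemma \ref{L1}, since a solution at $a=a^{*}$ would have to be an extremal of the Gagliardo--Nirenberg inequality.

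I expect the delicate point to be not the final arithmetic but the rigorous derivation of the Pohozaev identity. Identifying $\frac{d}{dt}I(\mathcal{H}(u,t))\big|_{t=0}$ with $I'(u)\big[\partial_t\mathcal{H}(u,t)|_{t=0}\big]$ and using criticality requires the infinitesimal generator $\partial_t\mathcal{H}(u,t)|_{t=0}$ --- which contains the unbounded weights $x\,u_x$ and $y\,u_y$ --- to be an admissible test function in $X$, and hence some regularity and decay of $u$. This is precisely where the restriction $N=2$ enters, as flagged in the introduction. I would therefore first secure the necessary regularity of solutions, after which the scaling computation and the comparison with Lemma \ref{L1} close the argument.
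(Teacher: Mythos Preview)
Your strategy---combine the Pohozaev-type identity $P(u)=0$ with the critical Gagliardo--Nirenberg inequality to force $\|u\|_D=0$---is exactly the paper's. Two small remarks: the paper obtains $P(u)=0$ by forward-referencing Lemma~\ref{Lem-poh}, whose proof rests on a rigorous Pohozaev identity taken from \cite{AGM} rather than on the scaling heuristic, so the regularity issue you flag does not arise there; and since the statement only asks for the \emph{existence} of some $a^*$, your concern about the endpoint is unnecessary---one may simply take $a^*$ strictly below the natural threshold $(5/(3C_{GN}))^{3/4}$, and then the strict-inequality argument already covers the closed interval $(0,a^*]$.
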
}

\begin{theorem}\label{Th2}
    Assume that $ f(t)=|t|^{q-2}t $. If $\frac{10}{3}<q<6$, for any $a>0$, problem \eqref{Equation} admits a couple $\left({u}, \lambda\right) \in S(a) \times \R$ of weak solutions and $\lambda<0$. In addition, ${u}$ is a normalized ground state solution of \eqref{Equation}.
\end{theorem}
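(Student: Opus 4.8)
The plan is to adapt Jeanjean's fibering approach \cite{Jean} to the anisotropic scaling $\mathcal{H}(u,t)(x,y)=e^{t}u(e^{\frac23 t}x,e^{\frac43 t}y)$, which preserves the constraint $S(a)$. Writing $\|u\|_X^2:=\int_{\mathbb{R}^2}(|u_x|^2+|D_x^{-1}u_y|^2)\,dx\,dy$, a direct change of variables gives
\[
I(\mathcal{H}(u,t))=\tfrac12 e^{\frac43 t}\|u\|_X^2-\tfrac1q e^{(q-2)t}\int_{\mathbb{R}^2}|u|^q\,dx\,dy .
\]
Since $\frac{10}{3}<q<6$ we have $q-2>\frac43$, so for each fixed $u\in S(a)$ the fibering map $t\mapsto I(\mathcal{H}(u,t))$ tends to $0^+$ as $t\to-\infty$, is positive for $t$ near $-\infty$, and tends to $-\infty$ as $t\to+\infty$; hence it has a unique maximum, attained at the unique $t_u$ for which $\mathcal{H}(u,t_u)$ lies on the Pohozaev set $\mathcal{P}(a):=\{u\in S(a):Q(u)=0\}$, where $Q(u):=\frac{d}{dt}I(\mathcal{H}(u,t))|_{t=0}=\frac23\|u\|_X^2-\frac{q-2}{q}\int|u|^q$. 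This produces the mountain-pass geometry of $I$ on $S(a)$ together with the characterization $\gamma_a:=\inf_{\mathcal{P}(a)}I=\inf_{u\in S(a)}\max_{t}I(\mathcal{H}(u,t))$. Using the Gagliardo--Nirenberg inequality of Lemma \ref{L1}, whose $X$-seminorm exponent $\frac{3(q-2)}{2}$ exceeds $2$ precisely because $q>\frac{10}{3}$, one checks that $\|u\|_X$ is bounded below on $\mathcal{P}(a)$, so $\gamma_a>0$.

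First I would construct, via Jeanjean's minimax scheme for the augmented functional $(u,t)\mapsto I(\mathcal{H}(u,t))$ on $S(a)\times\mathbb{R}$, a Palais--Smale sequence $(u_n)\subset S(a)$ for $I|_{S(a)}$ at level $\gamma_a$ which additionally satisfies $Q(u_n)\to0$. The boundedness of $(u_n)$ is the payoff of the extra condition: eliminating $\int|u_n|^q$ between $I(u_n)\to\gamma_a$ and $Q(u_n)\to0$ gives $I(u_n)=\big(\frac12-\frac{2}{3(q-2)}\big)\|u_n\|_X^2+o(1)$, and the coefficient $\frac12-\frac{2}{3(q-2)}$ is strictly positive exactly when $q>\frac{10}{3}$; hence $\|u_n\|_X$ is bounded, and since $\|u_n\|_2=a$ the full sequence is bounded in $X$. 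Writing the constrained equation as $I'(u_n)-\lambda_n u_n\to0$ in $X^*$, the multipliers $\lambda_n$ are bounded and, up to a subsequence, $\lambda_n\to\lambda$.

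The main obstacle is compactness, since, as recorded in Remarks (i)--(iii), the radial reduction used in the Schr\"odinger setting is unavailable here. I would instead exploit translation invariance in $(x,y)$, under which $I$, $Q$, and $\|\cdot\|_X$ are invariant. A Lions-type lemma in $X$ rules out vanishing: if $\sup_{z}\int_{B_1(z)}|u_n|^2\to0$ then $\int|u_n|^q\to0$, contradicting $Q(u_n)\to0$ together with the lower bound $\|u_n\|_X\ge c>0$ inherited from $\gamma_a>0$. Hence, after translating, I may assume $u_n\rightharpoonup u\neq0$ in $X$, and $u$ solves $-u_{xx}+D_x^{-2}u_{yy}-\lambda u=f(u)$ weakly. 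Because in $\mathbb{R}^2$ every such solution satisfies the Pohozaev identity $Q(u)=0$, combining $\int|u|^q=\frac{2q}{3(q-2)}\|u\|_X^2$ with the Nehari identity $\|u\|_X^2-\int|u|^q=\lambda\|u\|_2^2$ yields $\lambda\|u\|_2^2=\frac{q-6}{3(q-2)}\|u\|_X^2<0$, so $\lambda<0$.

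It remains to show that no mass escapes to infinity, i.e. $\|u\|_2=a$, and that the convergence is strong. The key extra ingredient is the strict monotonicity of $a\mapsto\gamma_a$: for $0<a_1<a_2$, inserting the dilations $\frac{a_2}{a_1}u$ of competitors $u\in S(a_1)$ into $\gamma_{a_2}$ and re-optimizing in $t$ gives $\gamma_{a_2}\le\mu\,\gamma_{a_1}$ with $\mu=\big(\tfrac{a_2}{a_1}\big)^{\frac{2(q-6)}{3q-10}}<1$ (using $\frac{10}{3}<q<6$), whence $\gamma_{a_2}<\gamma_{a_1}$. Then a Brezis--Lieb splitting of $I$, $Q$ and the $L^2$-norm along $u_n=u+v_n$ ($v_n\rightharpoonup0$) gives $Q(v_n)\to0$ and $\liminf I(v_n)\ge0$, so $I(u)\le\gamma_a$; but $u\in\mathcal{P}(c)$ with $c:=\|u\|_2\le a$ forces $I(u)\ge\gamma_c$, and strict monotonicity makes $c<a$ impossible. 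Thus $c=a$, $I(u)=\gamma_a$, $\lim I(v_n)=0$, and — using again the positivity of $\frac12-\frac{2}{3(q-2)}$ together with $Q(v_n)\to0$ — we get $\|v_n\|_X\to0$, i.e. $u_n\to u$ strongly. Finally $u\in S(a)$ with $I(u)=\gamma_a=\inf_{\mathcal{P}(a)}I$ and $\lambda<0$; since every normalized solution lies in $\mathcal{P}(a)$, $u$ is a normalized ground state, completing the proof. I expect the compactness step — ruling out vanishing through a tailored Lions lemma in $X$ and excluding dichotomy through the strict monotonicity of $\gamma_a$ — to be the decisive difficulty, precisely because the symmetry tools available for Schr\"odinger equations fail here.
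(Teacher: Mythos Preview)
Your proposal is correct and follows the same overall framework as the paper: Jeanjean's augmented functional on $S(a)\times\mathbb{R}$ with the anisotropic scaling $\mathcal{H}$, the Pohozaev-manifold characterization $\gamma_a=\inf_{\mathcal{P}(a)}I$, and a bounded Palais--Smale sequence satisfying $Q(u_n)\to 0$. Your derivation of $\lambda<0$ and of boundedness via the coefficient $\tfrac12-\tfrac{2}{3(q-2)}>0$ matches the paper's Lemma~\ref{Bounded}.

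The genuine difference is in the compactness step. The paper invokes a full profile decomposition (Lemma~\ref{profile}) to split $u_n$ into finitely many translated profiles $\widetilde u_0,\dots,\widetilde u_\ell$, each a constrained critical point on $S(b_i)$ with $a^2=\sum b_i^2$, and then excludes $\ell\ge 1$ via the strict subadditivity $\gamma(a)<\sum_i\gamma(b_i)$ of Lemma~\ref{subadd}, itself proved through the inequality $\gamma(\theta a)<\theta^2\gamma(a)$. You instead perform a single translation to secure a nontrivial weak limit $u$, use Br\'ezis--Lieb to split $I$, $Q$ and the $L^2$-norm along $u_n=u+v_n$, observe that $Q(u)=0$ (Pohozaev) forces $Q(v_n)\to 0$ and hence $\liminf I(v_n)\ge 0$, and rule out mass loss by the strict monotonicity $\gamma_{a_2}\le (a_2/a_1)^{2(q-6)/(3q-10)}\gamma_{a_1}<\gamma_{a_1}$, obtained by an explicit fibering computation for $\theta u$. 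Your monotonicity inequality is in fact sharper than the paper's $\gamma(\theta a)<\theta^2\gamma(a)$ (the exponent $\tfrac{2(q-6)}{3q-10}$ is negative for $\tfrac{10}{3}<q<6$), and your argument avoids the profile-decomposition machinery altogether. The trade-off is that the paper's route, being based on a general splitting lemma, ports more directly to the combined-nonlinearity setting of Section~5, whereas your explicit scaling computation is tailored to the pure power $|t|^{q-2}t$.
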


{\begin{remark}
When studying the $L^2$-critical case of the normalized solutions to the nonlinear Schr\"{o}dinger equation, that is, when \( q = 2 + \frac{4}{N} \) in \eqref{Eq-Eu}, there exists \( a_0 > 0 \) such that for any \( a > a_0 \),
\begin{equation}\label{Eq-ma}
    m(a):=\inf _{u \in \tilde{S}(a)} E(u)=-\infty
\end{equation}
 holds, where $\tilde{S}(a)=\left\{u\in H^1(\mathbb{R}^N),   \int_{\mathbb{R}^{N}}|u|^{2}dx=a^{2}\, \right\}$ and $ E(u) $ is defined \eqref{Eq-Eu}.
  The proof of \eqref{Eq-ma} relies on the attainability of the optimal function in the following Gagliardo-Nirenberg inequality
 {  \begin{equation}\label{Eq-GN inequality}
    |u|_q^q \leq {C}_{N, p}|\nabla u|_2^{2}|u|_2^{\frac{4}{N}}, \quad \forall u \in H^1(\mathbb{R}^N).
  \end{equation}}
However, since the attainability of the corresponding inequality in the generalized Kadomtsev--Petviashvili equation is still unknown (see \eqref{Gagliardo}), it remains unclear whether there exists \( a_0 > 0 \) such that the same conclusion holds.
\end{remark}
}

{
Recall that, when dealing with the nonlinear Schr\"{o}dinger equations,  the $L^2$-critical exponent
$$
 \bar{q}:= 2 + \frac{4}{N}
 $$
plays a special role. In \cite{SoavenonC}, Soave studied the nonlinear Schr\"{o}dinger equation with combined power nonlinearities, that is
\begin{equation}\label{Equation111}
    \left\{\begin{array}{l}
        -\Delta u=\lambda u+\mu|u|^{q-2} u+|u|^{p-2} u,{\quad x \in \mathbb{R}^N, } \\[10pt]
        \displaystyle   \int_{\mathbb{R}^N}|u|^2 d x=a^2,
    \end{array}\right.
\end{equation}
where $2<q \leq \bar{p} \leq p<2^*,$ with $p \neq q$ and $\mu \in \mathbb{R}$. As shown in \cite{SoavenonC}, the interplay between subcritical, critical and supercritical nonlinearities has a deep impact on the geometry of the functional and on the existence and properties of normalized ground state solutions. From some point of view, this can be considered as a kind of Br\'ezis-Nirenberg problem in the context of normalized solutions.
In particular, in the case where $2<q < \bar{p} < p<2^*$, it was proved that \eqref{Equation111}
admits a normalized ground state solution with negative energy,  as well as another solution of mountain pass type with positive energy.
Motivated by the research in \cite{SoavenonC}, in this paper we will also study the generalized Kadomtsev-Petviashvili equation with combined power nonlinearities, namely
$$ f(t)=\mu|t|^{q-2}t +|t|^{p-2}t$$
in \eqref{Equation}, where {$2<q<\frac{10}{3}<p<6$} and $\mu>0$.
However, since our approach does not allow us to work in the radially symmetric subspace, it is more difficult to overcome the lack of compactness. As a result, we are able to  prove the existence of normalized ground state solutions for the generalized Kadomtsev-Petviashvili equation in $\mathbb{R}^2$. Moreover, for a sequence $(a_n) \subset (0,a_0)$ with $a_n \to 0$ as $n \to+\infty$, we show that problem \eqref{Equation} with $a=a_n$ admits a second solution with positive energy.

\begin{theorem}\label{Th3}
     Assume that $ f(t)=\mu|t|^{q-2}t +|t|^{p-2}t$. If $2<q<\frac{10}{3}<p <6$, {there exists $a_0=a_0(\mu)>0$ such that}, for any $a \in\left(0, a_0\right)$,
     \eqref{Equation} admits a couple {$\left(u, \lambda\right) \in S(a) \times \R$ of weak solutions with  $\lambda<0$ and $u$ being a normalized ground state solution of \eqref{Equation}}. {Moreover, there is $(a_n) \subset (0,a_0)$ with $a_n \to 0$ as $n \to+\infty$, such that problem \eqref{Equation} with $a=a_n$ admits a second solution with positive energy.}
\end{theorem}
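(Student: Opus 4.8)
The plan is to transplant Soave's scheme for \eqref{Equation111} into the present setting, reading everything through the scaling $\mathcal{H}$ and the Gagliardo--Nirenberg inequality of Lemma \ref{L1}. First I would record how the ingredients transform under $\mathcal{H}(u,t)(x,y)=e^{t}u(e^{\frac23 t}x,e^{\frac43 t}y)$: writing $A(u)=\int_{\mathbb{R}^2}(|u_x|^2+|D_x^{-1}u_y|^2)\,dxdy$, one checks $|\mathcal{H}(u,t)|_2=|u|_2$, $A(\mathcal{H}(u,t))=e^{\frac43 t}A(u)$ and $|\mathcal{H}(u,t)|_r^r=e^{(r-2)t}|u|_r^r$, so the fiber map
\[
\psi_u(t):=I(\mathcal{H}(u,t))=\tfrac12 e^{\frac43 t}A(u)-\tfrac{\mu}{q}e^{(q-2)t}|u|_q^q-\tfrac1p e^{(p-2)t}|u|_p^p
\]
has its balance point exactly at $r=\tfrac{10}{3}$. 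Inserting Lemma \ref{L1} into $I$ on $S(a)$ gives, with $\tau=A(u)^{1/2}$, the bound $I(u)\ge \tfrac12\tau^2-c_1 a^{\frac{6-q}{2}}\tau^{\frac{3(q-2)}{2}}-c_2 a^{\frac{6-p}{2}}\tau^{\frac{3(p-2)}{2}}=:h(\tau)$, where $\tfrac{3(q-2)}{2}<2<\tfrac{3(p-2)}{2}$. For $a$ small the smallness of the coefficients $a^{\frac{6-q}{2}},a^{\frac{6-p}{2}}$ forces $h$ to have a strictly negative local minimum followed by a strictly positive local maximum at some radius $R_0$, before diverging to $-\infty$; this fixes $a_0=a_0(\mu)$ and the barrier radius $R_0$.

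For the ground state I would minimize on the well $V(a):=\{u\in S(a):A(u)^{1/2}<R_0\}$ and set $m(a)=\inf_{V(a)}I$. Choosing a test function in $V(a)$ shows $m(a)<0$, while on the boundary $A^{1/2}=R_0$ one has $I\ge h(R_0)>0$, so any minimizing Palais--Smale sequence (produced by Ekeland) stays uniformly inside $V(a)$, is bounded in $X$, and has bounded Lagrange multipliers. The only remaining issue is compactness: since the radial subspace is unavailable, I would use the translation invariance of $I$ and $S(a)$ together with Lions' concentration--compactness, excluding vanishing by $m(a)<0$ and dichotomy by the strict sub-additivity $m(a)<m(\alpha)+m(\sqrt{a^2-\alpha^2})$ (a scaling comparison via $h$). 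After translation the sequence converges strongly to an interior minimizer $u$, hence a genuine critical point of $I|_{S(a)}$. This gives $(u,\lambda)\in S(a)\times\mathbb{R}$; the sign $\lambda<0$ follows by combining the Lagrange identity with the Pohozaev identity $P(u):=\psi_u'(0)=\tfrac23A(u)-\tfrac{\mu(q-2)}{q}|u|_q^q-\tfrac{p-2}{p}|u|_p^p=0$, exactly as in Theorems \ref{Th1}--\ref{Th2}, and minimality yields that $u$ is a normalized ground state.

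For the second solution I would set up a mountain-pass scheme on $S(a)$: the local minimizer sits at negative level inside the well, the sphere $A^{1/2}=R_0$ carries energy $\ge h(R_0)>0$, and for each fixed $u$ the curve $t\mapsto\mathcal{H}(u,t)$ eventually drives $I$ to $-\infty$, so the family of paths crossing the barrier is nonempty and the minimax level satisfies $\sigma(a)\ge h(R_0)>0$. To obtain a bounded Palais--Smale sequence at level $\sigma(a)$ that additionally carries the Pohozaev information, I would run Jeanjean's device on the augmented functional $(u,s)\mapsto I(\mathcal{H}(u,s))$ over $S(a)\times\mathbb{R}$; the extra variable forces any approximating sequence to satisfy $P(u_n)\to0$, which, combined with the barrier geometry, pins down $A(u_n)$ and hence boundedness in $X$.

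The hard part will be the compactness of this Palais--Smale sequence at the \emph{positive} level, where neither $m(a)<0$ nor sub-additivity is available; this is precisely why the second solution is reached only along a sequence $a_n\to0$. I would first use a non-vanishing argument (Lions, with $P(u_n)\to0$ keeping $|u_n|_p$ bounded below) to translate the sequence to a nontrivial weak limit $u$ solving the Euler--Lagrange equation for some multiplier, and then a Brezis--Lieb splitting to see that compactness can fail only if a nontrivial fraction of mass escapes, producing a limiting (translation-generated) problem of positive level. Ruling this out requires a strict energy inequality placing $\sigma(a)$ below the resulting compactness threshold, and I expect this inequality to hold only for $a$ small. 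To make the selection rigorous I would exploit the monotonicity of $a\mapsto\sigma(a)$: a monotone function is differentiable almost everywhere, and at each point of differentiability Jeanjean's trick yields a bounded Palais--Smale sequence for which the strict inequality can be closed. Intersecting this full-measure set of good parameters with the subinterval of $(0,a_0)$ on which the threshold estimate is valid for small $a$ produces a sequence $a_n\to0$ along which the second, positive-energy solution exists. Recovering compactness at the mountain-pass level without the radial subspace is the central difficulty of the whole argument.
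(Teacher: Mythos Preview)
Your treatment of the ground state essentially matches the paper: the same well $V(a)$ cut out by the Gagliardo--Nirenberg profile $h$, the same strict sub-additivity/Lions argument to recover compactness of a minimizing sequence, and the same Pohozaev computation for $\lambda<0$.

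For the second solution, however, your plan diverges from the paper and has a gap at the decisive step. You propose to force compactness of the mountain-pass Palais--Smale sequence by placing $\sigma(a)$ strictly below a ``compactness threshold'' and to select the good masses via the a.e.\ differentiability of $a\mapsto\sigma(a)$. Two problems. First, the monotonicity trick (Jeanjean, Jeanjean--Lu) delivers \emph{bounded} Palais--Smale sequences, which you already obtain from the augmented functional; it does not, by itself, produce a strict energy inequality ruling out splitting, and you do not explain what extra information differentiability of $\sigma$ would give here. Second, since $p<6$ there is no Sobolev-critical bubble and hence no canonical fixed threshold: the only obstruction is translation-splitting into several constrained critical points at smaller masses $b_i$, and you never identify which inequality of the form $\sigma(a)<\cdots$ you intend to prove, nor how smallness of $a$ would yield it.

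The paper sidesteps compactness altogether. After building the mountain-pass geometry over $V(a)$ (Lemma~\ref{Mp}) and extracting a bounded Palais--Smale sequence at level $M(a)>0$, it applies the profile decomposition (the analogue of Lemma~\ref{profile} for $J_\mu$): up to translations the sequence splits into finitely many pieces $\widetilde u_0,\dots,\widetilde u_\ell$, each a constrained critical point at its own mass $b_i$, with $\sum_i b_i^2=a^2$ and $\sum_i J_\mu(\widetilde u_i)=M(a)>0$. Positivity of the sum forces $J_\mu(\widetilde u_{i_0})>0$ for some $i_0$, so $\widetilde u_{i_0}$ is a second solution---distinct from the negative-energy ground state---at mass $b_{i_0}\le a<a_0$. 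One then simply iterates: restart the whole argument from any mass $\le\min\{\tfrac12,b_{i_0}\}$, extract a second solution at some still smaller mass, and continue, producing $(a_n)\subset(0,a_0)$ with $a_n\to0$. No threshold estimate and no monotonicity argument are needed; the price is that one obtains the second solution only along a sequence rather than for every $a\in(0,a_0)$, which is exactly what the theorem claims.
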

\begin{remark}
In \cite{SoavenonC}, Soave studied the normalized solutions for the nonlinear Schr\"{o}dinger equation with combined nonlinearities. Especially, for the case involving a mixture of $L^{2}$-subcritical and $L^{2}$-supercritical nonlinearities, it was shown that a second solution with positive energy exists for all $a\in (0, a^{*})$, where $a^{*}$ is a positive number, see \cite[Theorem 1.3]{SoavenonC} for details. To overcome the lack of compactness in proving the existence of this second solution, Soave worked in the subspace of radial functions. However, for problem \eqref{Equation} considered in this paper, we have no information about radial functions in the working space $X$. Therefore, instead of relying on radial symmetry, we apply a different approach and prove the existence of a second solution with positive energy for a sequence $(a_n) \subset (0,a_0)$ with $a_n \to 0$ as $n \to+\infty$. Whether such a result holds for every $a\in (0, a_{0})$ remains an interesting and open problem.
\end{remark}

\noindent\textbf{Notations:}
\noindent
For $1 \leq p < \infty$ and $u \in L^p(\mathbb{R}^N)$, we denote $|u|_p := \left(\int_{\mathbb{R}^N} |u|^p\, dx \right)^{1/p}$. We use ``$\to$'' and ``$\rightharpoonup$'' to denote strong and weak convergence in the corresponding function spaces, respectively. $C$ and $C_i$ denote positive constants. $\langle \cdot, \cdot \rangle_X$ and $\langle \cdot, \cdot \rangle_{\mathbb{R}}$ denote the inner products in $X$ and $\mathbb{R}$, respectively. $X^*$ denotes the dual space of $X$. Finally, $o_n(1)$ and $O_n(1)$ denote quantities satisfying $|o_n(1)| \to 0$ and $|O_n(1)| \leq C$ as $n \to \infty$, respectively.

\section{Functional setting}\label{sec2}
Arguing as in Willem \cite[Chapter 7]{Willem}, we define the inner product on the set $Y=\{ g_x : g \in C^{\infty}_0 (\mathbb{R}^{2}) \}$ by
\begin{eqnarray}\label{Produtointerno}
    \langle u,v\rangle_X= \displaystyle\int_{\mathbb{R}^{2}} \left(u_x v_x + D^{-1}_{x} u_{y} D^{-1}_{x} v_{y} + u v\right)dx dy
\end{eqnarray}
and the corresponding norm by
\begin{eqnarray}\label{Norma}
    \|u\|= \left( \displaystyle\int_{\mathbb{R}^{2}} \left(|u_{x}|^{2} + |D^{-1}_{x} u_{y}|^{2} + |u|^{2}\right)dx dy\right)^{\frac{1}{2}}.
\end{eqnarray}
In the sequel, let us proceed to define the function space $X$ in which  we will work. A function $u:\mathbb{R}^{2}\to\mathbb{R}$ belongs to $X$ if there exists a sequence $(u_n) \subset Y$ such that
$$
u_n \to u \ \  a.e. \ \ \text{in}\ \ \mathbb{R}^{2} \ \ \mbox{and} \ \ \|u_j -u_k\|\to 0 \ \ \mbox{as} \ \ j, k \to +\infty.
$$
The space $X$ with inner product (\ref{Produtointerno}) and norm (\ref{Norma}) is a Hilbert space.
Hereafter, we also define
\begin{eqnarray}\label{Produtointerno0}
    \langle u,v \rangle_0= \displaystyle\int_{\mathbb{R}^{2}} \left(u_x v_x + D^{-1}_{x} u_{y} D^{-1}_{x} v_{y}\right)dx dy, \quad \forall u,v \in X
\end{eqnarray}
and
\begin{equation}\label{Norma0}
    \|u\|_0=  \left(\displaystyle\int_{\mathbb{R}^{2}} \left(|u_{x}|^{2} + |D^{-1}_{x} u_{y}|^{2} \right)dx dy\right)^{\frac{1}{2}}, \quad \forall u \in X.
\end{equation}
From \cite[Lemma 2.2]{Xu} , we know that  there exists a constant $S>0$ such that
\begin{equation} \label{S}
    |u|_6 \leq S\left(\int_{\mathbb{R}^2}\left(\left|u_x\right|^2+\left|D_x^{-1} u_y \right|^2\right) d xdy\right)^{\frac{1}{2}}, \quad \forall u \in X.
\end{equation}
We say that $(u, \lambda)$ is a solution  to \eqref{Equation} if $u \in X$ satisfies $\displaystyle \int_{\mathbb{R}^{2}}|u|^{2}dx=a^{2}$, $\lambda\in \mathbb{R}$, and
$$
\langle u,\phi\rangle_0 -\lambda\int_{\mathbb{R}^{2}}u\phi dx dy- \displaystyle\int_{\mathbb{R}^{2}} f(u)\phi dx dy = 0  \,\,\,\, \mbox{for all} \,\,  \phi\in X.
$$
Hereafter,  we say that $u \in X$ is a {\it normalized ground state solution}, if there is $\lambda \in \mathbb{R}$ such that $(u,\lambda)$ is a solution of \eqref{Equation}, and $u$ has minimal energy among all solutions which belong to $S(a)$, that is
$$
\left(\left.J\right|_{S(a)}\right)^{\prime}(u)=0 \quad \text { and } \quad J(u)=\inf \left\{J(w):\left(\left.J\right|_{S(a)}\right)^{\prime}(w)=0 \text { and } w \in S(a)\right\},
$$
{where $J: X \rightarrow \R$ is the functional defined by
\begin{equation*}
    J(u)=\frac{1}{2}\int_{\mathbb{R}^{2}} \left(|u_{x}|^{2} + |D^{-1}_{x} u_{y}|^{2} \right)dx dy-\int_{\mathbb{R}^2} F(u) \,dxdy.
\end{equation*}}

{Next, we establish a Gagliardo-Nirenberg type inequality, which is fundamental for the study of normalized solutions of generalized Kadomtsev-Petviashvili equations in $\mathbb{R}^2$.}
\begin{lemma} \label{L1}(The Gagliardo-Nirenberg inequality)
    For any $q\in [2, 6]$, we have
{   \begin{equation}\label{Gagliardo}
        |u|^{q}_{q} \leq C_q|u|^{(1-\beta )q}_{2}\left(\int_{\mathbb{R}^2}\left(\left|u_x\right|^2+\left|D_x^{-1} u_y\right|^2\right) d xdy\right)^{\frac{q\beta}{2}} \quad \forall u \in X,
    \end{equation}}
where  $ \beta=\frac{3}{2}-\frac{3}{q}$, for some positive constant $C_q>0.$
\end{lemma}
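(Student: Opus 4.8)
The plan is to derive \eqref{Gagliardo} directly from the endpoint embedding \eqref{S} by classical Lebesgue interpolation between $L^2$ and $L^6$. Because \eqref{S} is already stated in the homogeneous form $|u|_6 \le S\|u\|_0$, no scaling, optimization, or compactness argument is needed; the special scaling $\mathcal{H}$ is only required later for the mountain-pass geometry, not for this foundational inequality.

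First I would fix $q \in [2,6]$ and write the exponent as a convex combination of the endpoints $2$ and $6$: setting $\frac{1}{q} = \frac{1-\beta}{2} + \frac{\beta}{6}$ and solving gives exactly $\beta = \frac{3}{2} - \frac{3}{q} \in [0,1]$, the value appearing in the statement. For $\beta \in (0,1)$ (i.e. $q \in (2,6)$), Hölder's inequality applied to $|u|^q = |u|^{(1-\beta)q}\,|u|^{\beta q}$ with the conjugate exponents $r = \frac{2}{(1-\beta)q}$ and $r' = \frac{6}{\beta q}$ — whose reciprocals sum to $\frac{q}{2} - \frac{\beta q}{3} = 1$ precisely by the choice of $\beta$ — yields
\begin{equation*}
|u|_q \le |u|_2^{\,1-\beta}\,|u|_6^{\,\beta}.
\end{equation*}
This step is legitimate for every $u \in X$, since $u \in L^2(\mathbb{R}^2)$ (the term $|u|^2$ is part of the norm $\|\cdot\|$) and $u \in L^6(\mathbb{R}^2)$ by \eqref{S}, so both factors on the right are finite.

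Next I would substitute $|u|_6 \le S\|u\|_0$ from \eqref{S}, with $\|u\|_0 = \left(\int_{\mathbb{R}^2}(|u_x|^2 + |D_x^{-1}u_y|^2)\,dx\,dy\right)^{1/2}$, into the interpolation bound and raise both sides to the power $q$. This produces
\begin{equation*}
|u|_q^q \le S^{\beta q}\,|u|_2^{(1-\beta)q}\,\|u\|_0^{\beta q} = S^{\beta q}\,|u|_2^{(1-\beta)q}\left(\int_{\mathbb{R}^2}(|u_x|^2 + |D_x^{-1}u_y|^2)\,dx\,dy\right)^{\frac{q\beta}{2}},
\end{equation*}
which is exactly \eqref{Gagliardo} with the explicit constant $C_q = S^{\beta q}$.

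I do not anticipate a genuine obstacle: the argument reduces to the two-line combination above, and the only points requiring care are the bookkeeping that identifies the interpolation parameter with $\beta = \frac{3}{2} - \frac{3}{q}$ and the embedding $X \hookrightarrow L^2(\mathbb{R}^2) \cap L^6(\mathbb{R}^2)$ that makes the right-hand side finite. The endpoints can be recorded as consistency checks handled directly rather than through Hölder: $q = 2$ gives $\beta = 0$ and the trivial identity $|u|_2^2 \le C_2|u|_2^2$ (take $C_2 = 1$), while $q = 6$ gives $\beta = 1$ and recovers \eqref{S} raised to the sixth power (with $C_6 = S^6$), so the formula is valid across the whole range $[2,6]$.
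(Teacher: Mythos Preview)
Your proposal is correct and follows essentially the same route as the paper's own proof: interpolate between $L^2$ and $L^6$ to obtain $|u|_q \le |u|_2^{1-\beta}|u|_6^{\beta}$ with $\beta = \tfrac{3}{2} - \tfrac{3}{q}$, then substitute the endpoint embedding \eqref{S} and read off $C_q = S^{q\beta}$, treating the endpoints $q=2,6$ separately. The paper's argument is terser but identical in substance.
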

\begin{proof}
    If $q=2$ or $q=6$, the above inequality is straightforward. For $q\in (2, 6)$, we apply the interpolation inequality of Lebesgue's space combined with \eqref{S}. Then,
    \begin{align*}
        |u|^{q}_q \leq & |u|^{q(1-\beta)}_2|u|^{q\beta}_6 \\
        \leq  & S^{q\beta}|u|^{q(1-\beta)}_2\left(\int_{\mathbb{R}^2}\left(\left|u_x\right|^2+\left|D_x^{-1} u_y\right|^2\right) d xdy\right)^{\frac{q\beta}{2}}\\
        =  & C_q|u|^{q(1-\beta)}_2\left(\int_{\mathbb{R}^2}\left(\left|u_x\right|^2+\left|D_x^{-1} u_y\right|^2\right) d xdy\right)^{\frac{q\beta}{2}},
    \end{align*}
    where $C_q=S^{q\beta}>0$  depends only on $q$.
\end{proof}
\section{{$L^2$-subcritical case}}
In this section, without further mention, we assume that {$f(t)=|t|^{q-2}t$} and $q \in (2,\frac{10}{3})$. {Our main goal is to study} the existence of a minimizer for the $L^{2}$-constraint minimization problem:}
$$
\Upsilon_a=\inf_{u \in S(a)}J(u).
$$
{First of all, we show that the functional $J$ is coercive on $S(a)$ using the Gagliardo-Nirenberg inequality established in Lemma \ref{L1}.
\begin{lemma}\label{L2}
For any $a>0$, the functional $J$ is coercive on $S(a)$.
\end{lemma}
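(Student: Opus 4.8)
The plan is to control the nonlinear term by the Gagliardo-Nirenberg inequality established in Lemma \ref{L1} and to exploit the fact that the $L^2$-norm is frozen at the value $a$ along the constraint $S(a)$. First I would record that, since $f(t)=|t|^{q-2}t$, the primitive is $F(t)=\frac{1}{q}|t|^q$, so that $J(u)=\frac{1}{2}\|u\|_0^2-\frac{1}{q}|u|_q^q$ for every $u\in X$. For $u\in S(a)$ we have $|u|_2=a$, and Lemma \ref{L1} then yields
\[
|u|_q^q\leq C_q\,a^{(1-\beta)q}\,\|u\|_0^{q\beta},\qquad \beta=\frac{3}{2}-\frac{3}{q}.
\]

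The decisive point is the size of the exponent $q\beta$. A direct computation gives $q\beta=\frac{3q}{2}-3$, and the subcritical hypothesis $q<\frac{10}{3}$ translates into exactly $q\beta<2$. Substituting the estimate above into $J$ therefore produces a lower bound of the form
\[
J(u)\geq \frac{1}{2}\|u\|_0^2-\frac{C_q}{q}\,a^{(1-\beta)q}\,\|u\|_0^{q\beta},
\]
in which the positive quadratic term $\frac{1}{2}\|u\|_0^2$ dominates the negative term of order $\|u\|_0^{q\beta}$ as $\|u\|_0\to+\infty$, because $q\beta<2$. Finally, since on $S(a)$ one has $\|u\|^2=\|u\|_0^2+a^2$, coercivity with respect to $\|u\|_0$ is equivalent to coercivity with respect to the full norm $\|u\|$; hence $J(u)\to+\infty$ as $\|u\|\to+\infty$ along $S(a)$, which is the claim.

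I do not expect any genuine obstacle in this argument: the entire content is the comparison $q\beta<2$, which is the algebraic shadow of $q$ lying in the $L^2$-subcritical range. It is worth emphasizing, however, that this is precisely the step where the subcritical assumption is indispensable. If $q=\frac{10}{3}$ one gets $q\beta=2$, so the two terms are of the same order and coercivity may fail once $a$ is large enough; and if $q>\frac{10}{3}$ then $q\beta>2$ and $J$ becomes unbounded below on $S(a)$. Thus the only care required is to carry out the exponent computation correctly and to note that the constant $\frac{C_q}{q}a^{(1-\beta)q}$ is finite and independent of $u$.
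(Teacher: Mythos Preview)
Your proof is correct and follows essentially the same approach as the paper: both apply the Gagliardo--Nirenberg inequality of Lemma~\ref{L1} on $S(a)$ to obtain the lower bound $J(u)\geq \tfrac{1}{2}\|u\|_0^2-\tfrac{C_q}{q}a^{(1-\beta)q}\|u\|_0^{q\beta}$ and then observe that $q\beta=\tfrac{3q}{2}-3<2$ in the subcritical range $q<\tfrac{10}{3}$, so the quadratic term dominates. Your additional remark that $\|u\|^2=\|u\|_0^2+a^2$ on $S(a)$, making coercivity in $\|u\|_0$ equivalent to coercivity in $\|u\|$, is a helpful clarification not spelled out in the paper.
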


\begin{proof}
By Lemma \ref{L1}, we have
\begin{equation}
\begin{aligned}
J(u)&=\frac{1}{2}\int_{\mathbb{R}^{2}} \left(|u_{x}|^{2} + |D^{-1}_{x} u_{y}|^{2} \right)dx dy-\frac{1}{q}\int_{\mathbb{R}^2} |u|^q \,dxdy\\
&\geq\frac{1}{2}\int_{\mathbb{R}^{2}} \left(|u_{x}|^{2} + |D^{-1}_{x} u_{y}|^{2} \right)dx dy-\frac{1}{q}C_qa^{q(1-\beta)}\left(\int_{\mathbb{R}^2}\left(\left|u_x\right|^2+\left|D_x^{-1} u_y\right|^2\right) d xdy\right)^{\frac{q\beta}{2}}.
\end{aligned}
\end{equation}
Since $q\in (2, \frac{10}{3})$, it follows that ${\frac{q\beta}{2}}<2$. Hence, $J$ is bounded from below and coercive on $S(a)$.
\end{proof}
{Our next lemma shows that $\Upsilon_a<0$, a very useful property for ruling out the vanishing for the minimizing sequence of $\Upsilon_a$.}
\begin{lemma}\label{L12}
    For any $a>0$, it holds that $-\infty<\Upsilon_a<0$.
\end{lemma}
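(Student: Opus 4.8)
The plan is to treat the two inequalities separately. The lower bound $\Upsilon_a > -\infty$ is immediate from Lemma \ref{L2}: since $J$ is coercive—hence bounded from below—on $S(a)$, the infimum cannot be $-\infty$. For the strict upper bound $\Upsilon_a < 0$, the idea is to produce a single element of $S(a)$ with negative energy by exploiting the scaling $\mathcal{H}(u,t)(x,y) = e^{t}u(e^{\frac{2}{3}t}x, e^{\frac{4}{3}t}y)$ introduced in the Introduction. First I would fix any $u \in S(a)$ (for instance the normalization to the sphere of a fixed nonzero test function) and check that $\mathcal{H}(u,t) \in S(a)$ for all $t \in \mathbb{R}$: the change of variables $\xi = e^{\frac{2}{3}t}x$, $\eta = e^{\frac{4}{3}t}y$ has Jacobian $e^{2t}$, so the factor $e^{2t}$ produced by $|e^t u|^2$ cancels the $e^{-2t}$ coming from the measure, and the $L^2$-norm is preserved.

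Next I would compute the behavior of the three ingredients of $J$ under this scaling. Using the same substitution one obtains
\begin{equation*}
\int_{\mathbb{R}^2}\left(|\partial_x \mathcal{H}(u,t)|^2 + |D_x^{-1}\partial_y\mathcal{H}(u,t)|^2\right)dxdy = e^{\frac{4}{3}t}\|u\|_0^2, \qquad \int_{\mathbb{R}^2}|\mathcal{H}(u,t)|^q\, dxdy = e^{(q-2)t}|u|_q^q,
\end{equation*}
so that
\begin{equation*}
J(\mathcal{H}(u,t)) = \frac{1}{2}e^{\frac{4}{3}t}\|u\|_0^2 - \frac{1}{q}e^{(q-2)t}|u|_q^q.
\end{equation*}

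The conclusion then follows by letting $t \to -\infty$. Since $2 < q < \frac{10}{3}$ we have $0 < q-2 < \frac{4}{3}$, so the exponent $\frac{4}{3}$ of the positive term strictly exceeds the exponent $q-2$ of the negative one. Factoring out $e^{(q-2)t}$, the bracketed quantity $\frac{1}{2}e^{(\frac{10}{3}-q)t}\|u\|_0^2 - \frac{1}{q}|u|_q^q$ tends to $-\frac{1}{q}|u|_q^q < 0$ because $\frac{10}{3}-q > 0$ (and $|u|_q > 0$ as $|u|_2 = a > 0$). Hence $J(\mathcal{H}(u,t)) < 0$ for $t$ sufficiently negative, which yields $\Upsilon_a \le J(\mathcal{H}(u,t)) < 0$.

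The one computation deserving care—and which I expect to be the main technical point—is the scaling law for the anisotropic term $D_x^{-1}\partial_y \mathcal{H}(u,t)$, because $D_x^{-1}$ is the nonlocal antiderivative $\int_{-\infty}^x$. Here one must track that differentiation in $y$ produces $e^{\frac{4}{3}t}$, while integration in $s$ followed by the substitution $\sigma = e^{\frac{2}{3}t}s$ contributes $e^{-\frac{2}{3}t}$, giving $D_x^{-1}\partial_y\mathcal{H}(u,t)(x,y) = e^{\frac{5}{3}t}(D_x^{-1}u_y)(e^{\frac{2}{3}t}x, e^{\frac{4}{3}t}y)$; squaring and integrating (Jacobian $e^{-2t}$) then yields the factor $e^{\frac{10}{3}t}\cdot e^{-2t} = e^{\frac{4}{3}t}$, matching exactly the scaling of the $|\partial_x\cdot|^2$ term and justifying the first displayed identity above.
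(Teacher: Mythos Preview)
Your proposal is correct and follows essentially the same approach as the paper: both use Lemma \ref{L2} for the lower bound, and for the upper bound both fix $u\in S(a)$, apply the anisotropic scaling $u_t=e^{t}u(e^{\frac{2}{3}t}x,e^{\frac{4}{3}t}y)$ to stay on $S(a)$, compute $J(u_t)=\tfrac{1}{2}e^{\frac{4}{3}t}\|u\|_0^2-\tfrac{1}{q}e^{(q-2)t}|u|_q^q$, and send $t\to-\infty$ using $q-2<\tfrac{4}{3}$. Your write-up is in fact more detailed than the paper's, in particular in justifying the scaling law for the nonlocal term $D_x^{-1}\partial_y$, which the paper simply asserts as a ``direct computation.''
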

\begin{proof}
    Let $u\in S(a)$ and set $u_t=e^{t}u(e^{\frac{2}{3}t}x,e^{\frac{4}{3}t}y) $ for $t\in\R$, which satisfies $u_t\in S(a)$. {A direct computation yields}
    $$
    J(u_t)=\frac{e^{\frac{4}{3}t}}{2}\int_{\mathbb{R}^2}\left(|u_x|^2+|D^{-1}_{x} u_{y}|^2\right)\,dxdy-\frac{e^{(q-2)t}}{q}\int_{\mathbb{R}^2}|u|^q\,dxdy.
    $$
    Since $q\in(2,\frac{10}{3})$, it follows that $J(u_t)<0$ for $t<0$ and $|t|$ large enough, from where it follows that \linebreak  $\Upsilon_a \leq J\left(u_t\right)<0$. Together with Lemma \ref{L2}, this implies that $-\infty<\Upsilon_a<0$.
\end{proof}

\begin{lemma} \label{L3} If $0<a_1<a_2$, we have $\frac{a_1^2}{a_2^2}\Upsilon_{a_2}<\Upsilon_{a_1}<0$.

\end{lemma}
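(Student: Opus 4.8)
The plan is to exploit the homogeneity of the two pieces of $J$ under the pure amplitude dilation $u \mapsto \tau u$, which rescales the $L^2$-norm while leaving the spatial variables untouched. Set $\tau := a_2/a_1 > 1$. Since $|\tau u|_2 = \tau |u|_2$, this map sends $S(a_1)$ into $S(a_2)$, and because $\|\cdot\|_0$ is $1$-homogeneous while $|\cdot|_q^q$ is $q$-homogeneous, one computes directly
$$J(\tau u) = \frac{\tau^2}{2}\|u\|_0^2 - \frac{\tau^q}{q}|u|_q^q,$$
so that, writing $s := a_1^2/a_2^2 = \tau^{-2}$,
$$s\,J(\tau u) = \frac{1}{2}\|u\|_0^2 - \frac{\tau^{q-2}}{q}|u|_q^q = J(u) - \frac{\tau^{q-2}-1}{q}|u|_q^q.$$
The coefficient $\tau^{q-2}-1$ is strictly positive because $\tau > 1$ and $q > 2$, so the entire gain hinges on showing that $|u|_q^q$ stays bounded away from $0$ for functions $u \in S(a_1)$ that nearly realize $\Upsilon_{a_1}$.

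First I would fix $u \in S(a_1)$ with $J(u) < \Upsilon_{a_1} + \varepsilon$, where $\varepsilon \in (0, |\Upsilon_{a_1}|)$ is an admissible choice precisely because Lemma \ref{L12} guarantees $\Upsilon_{a_1} < 0$. Discarding the nonnegative term $\frac{1}{2}\|u\|_0^2$ in $J(u)$ then yields the lower bound $|u|_q^q > -q(\Upsilon_{a_1}+\varepsilon) > 0$. Feeding this into the displayed identity and using $\tau u \in S(a_2)$, hence $\Upsilon_{a_2} \le J(\tau u)$, I obtain
$$s\,\Upsilon_{a_2} \le s\,J(\tau u) < (\Upsilon_{a_1}+\varepsilon) + (\tau^{q-2}-1)(\Upsilon_{a_1}+\varepsilon) = (\Upsilon_{a_1}+\varepsilon)\,\tau^{q-2}.$$
Letting $\varepsilon \to 0^+$ gives $s\,\Upsilon_{a_2} \le \Upsilon_{a_1}\,\tau^{q-2}$, and since $\Upsilon_{a_1}<0$ together with $\tau^{q-2}>1$ forces $\Upsilon_{a_1}\,\tau^{q-2} < \Upsilon_{a_1}$, the strict inequality $\frac{a_1^2}{a_2^2}\Upsilon_{a_2} < \Upsilon_{a_1}$ follows. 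The remaining bound $\Upsilon_{a_1} < 0$ is again Lemma \ref{L12}, completing the chain $\frac{a_1^2}{a_2^2}\Upsilon_{a_2} < \Upsilon_{a_1} < 0$.

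The only genuinely delicate point is upgrading the inequality from weak to strict. The scaling identity alone, applied to an arbitrary test function and then passed to the infimum over $S(a_1)$, would deliver merely $s\,\Upsilon_{a_2} \le \Upsilon_{a_1}$. The strictness is recovered entirely from the sign information $\Upsilon_{a_1}<0$ of Lemma \ref{L12}, which forces the nonlinear term $|u|_q^q$ of any near-minimizer to be uniformly positive; this is exactly what keeps the positive gain $\frac{\tau^{q-2}-1}{q}|u|_q^q$ alive in the limit. I would be careful to keep $\varepsilon$ strictly below $|\Upsilon_{a_1}|$ so that the lower bound on $|u|_q^q$ remains positive throughout, and to note that no minimizer of $\Upsilon_{a_1}$ (whose existence is not yet available at this stage) is needed — only an $\varepsilon$-minimizing test function.
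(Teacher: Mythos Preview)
Your proof is correct and follows essentially the same approach as the paper: both use the amplitude dilation $u \mapsto \tau u$ (with $\tau = a_2/a_1 > 1$) to compare $\Upsilon_{a_1}$ and $\Upsilon_{a_2}$, and both extract the strict inequality from the fact that $\Upsilon_{a_1}<0$ forces the $L^q$-term of any near-minimizer to stay uniformly positive. The only cosmetic difference is that the paper works with a minimizing sequence and the decomposition $J(\tau u_n) = \tau^2 J(u_n) + \frac{\tau^2-\tau^q}{q}|u_n|_q^q$, whereas you use a single $\varepsilon$-minimizer and the equivalent rewriting $\tau^{-2}J(\tau u) = J(u) - \frac{\tau^{q-2}-1}{q}|u|_q^q$; your explicit lower bound $|u|_q^q > -q(\Upsilon_{a_1}+\varepsilon)$ then yields the sharper intermediate estimate $s\,\Upsilon_{a_2} \le \Upsilon_{a_1}\tau^{q-2}$, from which the strict inequality follows cleanly.
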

\begin{proof} Let $\xi>1$ such that $a_2=\xi a_1$, and let $(u_n) \subset S(a_1)$ be a minimizing sequence with respect to $\Upsilon_{a_1}$, i.e.,
$$
J(u_n) \to \Upsilon_{a_1}\, \,\,\text{as}\,\,
\, n \to +\infty.
$$
Define $v_n={\xi}{u_n}$, we derive that $v_n \in S(a_2)$, and so,
\begin{equation}
    \Upsilon_{a_2} \leq J\left(v_n\right)=\xi^2 J\left(u_n\right)+\frac{\left(\xi^2-\xi^q\right)}{q} \int_{\mathbb{R}^2}\left|u_n\right|^q d x d y .
\end{equation}
We claim that there exist a positive constant $C>0$ and $n_0\in \mathbb{N}$ such that $\displaystyle \int_{\mathbb{R}^2}|u_n|^q\,dxdy \geq C$ for all $n\geq n_0 $.
Otherwise, we have
$$
\int_{\mathbb{R}^2}|u_n|^q\,dxdy \to 0,  \,\,\text{as}\,\,n \to +\infty,
$$
up to a subsequence if necessary. Now, recalling that
$$
0>\Upsilon_{a_1}+o_n(1)=J(u_n)\geq -\frac{1}{q}\int_{\mathbb{R}^2}|u_n|^q\,dxdy, \quad n \in \mathbb{N},
$$
we get a contradiction, and our claim is proved.
Using this claim and the fact that $\xi^2-\xi^q<0$, we obtain that for $n\in \mathbb{N}$ large
$$
\Upsilon_{a_2}\leq \xi^2 J(u_n)+\frac{(\xi^{2}-\xi^q)C}{q}.
$$
Let $n \to +\infty$, one gets
$$
\Upsilon_{a_2}\leq \xi^2 \Upsilon_{a_1}+\frac{(\xi^{2}-\xi^q)C}{q}<\xi^2\Upsilon_{a_1},
$$
that is,
$$
\frac{a_1^2}{a_2^2}\Upsilon_{a_2}<\Upsilon_{a_1},
$$
which proves the lemma.
\end{proof}
\begin{lemma} \label{C2} Let $(u_n)\subset S(a)$ be a minimizing sequence with respect to $\Upsilon_{a}$ such that  $u_n \rightharpoonup u$ in $X$, \linebreak $u_n(x) \to u(x)$ a.e. in $\mathbb{R}^2$ and $u \not=0 $. Then, $u \in S(a)$, $J(u)=\Upsilon_a$ and $u_n \to u$ in $X$.

\end{lemma}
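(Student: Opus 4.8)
The plan is to rule out any loss of mass at infinity by combining a Brezis--Lieb splitting of both the constraint and the functional with the strict subadditivity established in Lemma \ref{L3}. Write $v_n := u_n - u$, so that $v_n \rightharpoonup 0$ in $X$ and $v_n \to 0$ a.e. Since $(u_n)$ is a minimizing sequence for the functional $J$, which is coercive on $S(a)$ by Lemma \ref{L2}, it is bounded in $X$, hence bounded in $L^p$ for every $p \in [2,6]$ by Lemma \ref{L1}. Set $b := |u|_2$; Fatou's lemma gives $0 < b \le a$, and the goal is to prove $b = a$.

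First I would establish the decomposition $J(u_n) = J(u) + J(v_n) + o_n(1)$. For the quadratic part this is the Hilbert-space identity $\|u_n\|_0^2 = \|u\|_0^2 + \|v_n\|_0^2 + 2\langle u, v_n\rangle_0$, where the cross term tends to zero because $\langle u, \cdot\rangle_0$ is a bounded linear functional on $X$ (by Cauchy--Schwarz for $\langle\cdot,\cdot\rangle_0$ and $\|\cdot\|_0 \le \|\cdot\|$) and $v_n \rightharpoonup 0$. For the nonlinear part I would apply the Brezis--Lieb lemma, legitimate since $v_n \to 0$ a.e. and $(v_n)$ is bounded in $L^q$, giving $|u_n|_q^q = |u|_q^q + |v_n|_q^q + o_n(1)$; the same lemma with $p = 2$ yields $a^2 = |u_n|_2^2 = b^2 + |v_n|_2^2 + o_n(1)$, so $|v_n|_2^2 \to a^2 - b^2 =: c^2$. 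Passing to the limit, the quantity $\lim_n J(v_n) = \Upsilon_a - J(u)$ exists.

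The crux is excluding the dichotomy $0 < b < a$. Assume it, so that $c > 0$ and $b, c \in (0,a)$. Normalizing, put $w_n := (c/|v_n|_2)\, v_n \in S(c)$ for $n$ large; since the scaling factor tends to $1$ while $(\|v_n\|_0)$ and $(|v_n|_q)$ stay bounded, $J(w_n) = J(v_n) + o_n(1)$, whence $\lim_n J(v_n) \ge \Upsilon_c$. Together with $J(u) \ge \Upsilon_b$, valid because $u \in S(b)$, this gives $\Upsilon_a = J(u) + \lim_n J(v_n) \ge \Upsilon_b + \Upsilon_c$. On the other hand, applying Lemma \ref{L3} once with $a_1 = b < a$ and once with $a_1 = c < a$ and adding the two resulting inequalities produces the strict subadditivity $\Upsilon_b + \Upsilon_c > \Upsilon_a$ (here $b^2 + c^2 = a^2$ is used), a contradiction. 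Hence $b = a$, i.e. $u \in S(a)$. I expect this normalization-plus-subadditivity step to be the main obstacle, both because one must cope with the fact that $v_n$ lies only approximately on $S(c)$ and because the strict inequality in Lemma \ref{L3} is precisely what forbids the splitting.

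It remains to upgrade to strong convergence. With $b = a$ we have $c = 0$, so $|v_n|_2 \to 0$; since $q < 6$ gives $1 - \beta = \tfrac{3}{q} - \tfrac12 > 0$, the Gagliardo--Nirenberg inequality of Lemma \ref{L1} forces $|v_n|_q \to 0$, as $(\|v_n\|_0)$ is bounded. Consequently $\lim_n J(v_n) = \tfrac12 \lim_n \|v_n\|_0^2 \ge 0$, and from $\Upsilon_a = J(u) + \lim_n J(v_n)$ together with $J(u) \ge \Upsilon_a$ I conclude both $J(u) = \Upsilon_a$ and $\|v_n\|_0 \to 0$. Combining $\|v_n\|_0 \to 0$ with $|v_n|_2 \to 0$ yields $\|v_n\| \to 0$, that is $u_n \to u$ in $X$, which completes the proof.
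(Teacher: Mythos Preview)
Your argument is correct and follows essentially the same strategy as the paper: split $J(u_n)=J(u)+J(v_n)+o_n(1)$ via Br\'ezis--Lieb and the Hilbert identity for $\|\cdot\|_0$, then use the strict subadditivity of Lemma~\ref{L3} to force $|u|_2=a$. The only tactical differences are that the paper compares $J(v_n)$ directly with $\Upsilon_{d_n}$ (where $d_n=|v_n|_2$) rather than first normalizing onto $S(c)$, and for the upgrade to strong convergence the paper invokes weak lower semicontinuity of $\|\cdot\|_0^2$ together with $L^2$-norm convergence and interpolation, whereas you stay with the Br\'ezis--Lieb splitting and read off $\|v_n\|_0\to 0$ from $J(u)\ge \Upsilon_a$; both routes are equally short and valid.
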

\begin{proof}Indeed, if $|u|_2=b \not=a$, by Fatou's lemma and $u \not=0 $, we must have $b \in (0,a)$. By the Br\'{e}zis-Lieb lemma (see \cite[Lemma 1.32]{Willem}  ),
\begin{equation}\label{BL-1}
|u_n|_2^{2}=|u_n-u|_{2}^{2}+|u|_{2}^{2}+o_n(1)
\end{equation}
and
\begin{equation}\label{BL-2}
|u_n|_q^{q}=|u_n-u|_{q}^{q}+|u|_{q}^{q}+o_n(1).
\end{equation}
Since $u_n \rightharpoonup u$ in $X$, we have
\begin{equation}\label{BL-4}
\|u_n\|_0^{2}=\|u_n-u\|_0^{2}+\|u\|_0^{2}+o_n(1).
\end{equation}
{Let $v_n:=u_n-u$, $d_n:=|v_n|_2$} and suppose that $|v_n|_2 \to d$ as $n \to +\infty$, we deduce that $a^2=b^2+d^2$ and $d_n \in (0,a)$ for $n$ large enough. \eqref{BL-2}-\eqref{BL-4} together with Lemma \ref{L3} imply
\begin{align*}
\Upsilon_{a}+o_n(1)=J(u_n)=&J(v_n)+J(u)+o_n(1)\\
\geq &\Upsilon_{d_n}+\Upsilon_b+o_n(1)\\
\geq &\frac{d_n^2}{a^2}\Upsilon_{a}+\Upsilon_b+o_n(1).
\end{align*}
Let $n \to +\infty$, one finds
    \begin{equation}\label{newine}
    \Upsilon_{a} \geq \frac{d^2}{a^2}\Upsilon_{a}+\Upsilon_b.
    \end{equation}
    Since $b \in (0,a)$, using Lemma \ref{L3} again in \eqref{newine}, we derive the following inequality
    $$
    \Upsilon_{a} > \frac{d^2}{a^2}\Upsilon_{a}+\frac{b^2}{a^2}\Upsilon_{a}=\left(\frac{d^2}{a^2}+\frac{b^2}{a^2}\right)\Upsilon_a=\Upsilon_a,
    $$
    which is absurd. This shows that $|u|_2=a$, that is, $u \in S(a)$.
Since $|u_n|_2=|u|_2=a$ and $u_n \rightharpoonup u$ in $L^{2}(\mathbb{R}^2)$, it follows that
$$
u_n \to u \quad \mbox{in} \quad L^{2}(\mathbb{R}^2).
$$
{The last limit combined} with interpolation theorem in the Lebesgue space gives
$$
u_n \to u \quad \mbox{in} \quad L^{q}(\mathbb{R}^2).
$$
On the other hand, since {$u \mapsto \displaystyle \int_{\mathbb{R}^{2}} \left(|u_{x}|^{2} + |D^{-1}_{x} u_{y}|^{2} \right)dx dy$}
is continuous and convex in $X$, we must have
\begin{align*}
\liminf_{n\to +\infty}\displaystyle\int_{\mathbb{R}^{2}} \left(|(u_{n})_{x}|^{2} + |D^{-1}_{x} (u_{n})_{y}|^{2} \right)dx dy
\geq\displaystyle\int_{\mathbb{R}^{2}} \left(|u_{x}|^{2} + |D^{-1}_{x} u_{y}|^{2} \right)dx dy.
\end{align*} \\
These limits together with $\Upsilon_a=\displaystyle \lim_{n \to +\infty}J(u_n)$ imply that
$$
\Upsilon_a \geq J(u).
$$
Since $u \in S(a)$, we conclude that $J(u)=\Upsilon_a$. Thus, $J(u_n) \to J(u)$.  Moreover, $u_n \to u$ in $L^{q}(\mathbb{R}^2)$ implies that $u_n \to u$ in $X$.
\end{proof}
\noindent\textbf{Proof of Theorem \ref{Th1}:}
{By Lemma \ref{L2},  there exists a bounded minimizing sequence $(u_n) \subset S(a)$ with respect to $\Upsilon_{a}$. We claim that there are $R,\beta>0$ and $(x_n, y_n) \in \mathbb{R}^2$ such that}
\begin{equation} \label{Lions}
\int_{B_R((x_n, y_n))}|u_n|^2\,dx \geq \beta, \,\, \text{for all}\,\, n \in N.
\end{equation}
Otherwise, by Lions-type result for $X$ found in \cite[Lemma 7.4]{Willem}, for any $q \in\left(2, 6\right)$, one has
$$
\int_{\mathbb{R}^{2}}|u_{n}|^{q} dx dy\rightarrow 0, \,\,\text{as}\,\, n \rightarrow +\infty
$$
which is absurd.  {From this, considering $\hat{u}_n(x, y)=u(x+x_n, y+y_n)$, we have that $(\hat{u}_n) \subset S(a)$,  $(\hat{u}_n)$ is also a minimizing sequence with respect to $\Upsilon_{a}$, and we can assume $\hat{u}_n \rightharpoonup \hat{u}$ in $X$ with $\hat{u} \not= 0$ . From Lemma \ref{C2}, $\hat{u} \in S(a)$, $J(\hat{u})=\Upsilon_a$ and $\hat{u}_n \to \hat{u}$ in $X$, finishing the proof of Theorem \ref{Th1}.}

{Next, we prove Theorem \ref{Th-1}, in the remainer of this section, we assume that $f(t)=|t|^{\frac{4}{3}}t$.}

{
\noindent\textbf{Proof of Theorem \ref{Th-1}:}
Let $u\in S(a)$ and set $u_t=e^{t}u(e^{\frac{2}{3}t}x,e^{\frac{4}{3}t}y) $ for $t\in\R$. {A direct computation shows that}
$$
J(u_t)=\frac{e^{\frac{4}{3}t}}{2}\int_{\mathbb{R}^2}\left(|u_x|^2+|D^{-1}_{x} u_{y}|^2\right)\,dxdy-\frac{e^{\frac{4}{3}t}}{q}\int_{\mathbb{R}^2}|u|^{\frac{10}{3}}\,dxdy.
$$
Since $\displaystyle \lim_{t \rightarrow-\infty}J(u_t)=0$, it follows that $\Upsilon_{a}\leq 0$ for any $a>0$. On the other hand,  using \eqref{Gagliardo}, {we obtain for any $u\in S(a)$ that}\label{key}
$$
\begin{aligned}
    J(u) & =\frac{1}{2}\int_{\mathbb{R}^{2}} \left(|u_{x}|^{2} + |D^{-1}_{x} u_{y}|^{2} \right)dx dy-\frac{3}{10}\int_{\mathbb{R}^2} |u|^{\frac{10}{3}} \,dxdy\\
    & \geq \left(\frac{1}{2}-
    \frac{3}{10}C_{\frac{10}{3}}a^{\frac{4}{3}}
    \right)\int_{\mathbb{R}^{2}} \left(|u_{x}|^{2} + |D^{-1}_{x} u_{y}|^{2} \right)dx dy\\
    & \geq \frac{1}{2}\left(1-
    \left(\frac{a}{a^*}\right)^{\frac{4}{3}}
    \right)\int_{\mathbb{R}^{2}} \left(|u_{x}|^{2} + |D^{-1}_{x} u_{y}|^{2} \right)dx dy,
\end{aligned}
$$
where $a^*=\left(\frac{3}{5}C_{\frac{10}{3}}\right)^{-\frac{3}{4}}$. {From this inequality, we deduce that} $\Upsilon_{a}\geq 0$, and hence $\Upsilon_{a}=0$, for $ a \leq a^{*} $.

Next, we prove that there is no nontrivial solution to \eqref{Equation} when $ a \leq a^{*} $.
Indeed, if $ u $ is a solution to \eqref{Equation}, {then by Lemma \ref{Lem-poh} in Section \ref{sec4}, it satisfies the Pohozaev identity}
\begin{equation}
    \frac{2}{3}\int_{\mathbb{R}^{2}}\left( |u_{x}|^{2} +|D^{-1}_{x} u_{y}|^{2}\right) dx dy=\frac{2}{5}\int_{\mathbb{R}^2} |u|^{\frac{10}{3}}dxdy.
\end{equation}
 Applying \eqref{Gagliardo}, we obtain
$$
\frac{2}{3}\int_{\mathbb{R}^{2}}\left( |u_{x}|^{2} +|D^{-1}_{x} u_{y}|^{2}\right) dx dy\leq\frac{2}{5}\left(\frac{a}{a^*}\right)^{\frac{4}{3}}\int_{\mathbb{R}^{2}}\left( |u_{x}|^{2} +|D^{-1}_{x} u_{y}|^{2}\right) dx dy,
$$
{which implies} $ u = 0 $ when $ a \leq a^{*} $.}  This complete the proof.
\section{$L^2$-supercritical case}\label{sec4}
In this section, we assume that {$f(t)=|t|^{q-2}t$} and $q \in (\frac{10}{3},6)$. Define the space $H=X \times \mathbb{R}$, equipped with the inner product
$$
\langle\cdot, \cdot\rangle_{H}=\langle\cdot, \cdot\rangle_{X}+\langle\cdot, \cdot\rangle_{\mathbb{R}},
$$
and the corresponding norm
$$
\Vert \cdot\Vert_{H}=(\Vert \cdot\Vert^{2}+ \vert \cdot\vert_{ \mathbb{R}}^{2})^{1/2}.
$$
{Moreover, we  define the map  $\mathcal{H}: H\rightarrow X$ by }
\begin{equation*}
    \mathcal{H}(u, t)(x, y):=e^{t}u(e^{\frac{2}{3}t}x,e^{\frac{4}{3}t}y).
\end{equation*}
{Direct computations yield}
$$
\int_{\mathbb{R}^2}|\mathcal{H}(u, t)|^2\,dxdy=\int_{\mathbb{R}^2}|u|^2\,dxdy,
$$
$$
\int_{\mathbb{R}^2}|\mathcal{H}(u, t)|^q\,dxdy=e^{(q-2)t}\int_{\mathbb{R}^2}|u|^q\,dxdy,
$$
$$
\int_{\mathbb{R}^2}|(\mathcal{H}(u, t))_x|^2\,dxdy=e^{\frac{4}{3}t}\int_{\mathbb{R}^2}|u_x|^2\,dxdy,
$$
and
$$
\int_{\mathbb{R}^2}|D_x^{-1}(\mathcal{H}(u, t))_y|^2\,dxdy=e^{\frac{4}{3}t}\int_{\mathbb{R}^2}|D^{-1}_{x} u_{y}|^2\,dxdy.
$$
{Using the above notation,} consider the functional $\tilde{J}: X\rightarrow \mathbb{R}$ defined by
$$
{   \tilde{J}(u, t)=J(\mathcal{H}(u, t))}=\frac{e^{\frac{4}{3}t}}{2}\int_{\mathbb{R}^2}\left(|u_x|^2+|D^{-1}_{x} u_{y}|^2\right)\,dxdy-\frac{e^{(q-2)t}}{q}\int_{\mathbb{R}^2}|u|^q\,dxdy
$$
or equivalently,
$$
\tilde{J}(u, t)=\frac{1}{2}\int_{\mathbb{R}^{2}} \left(|v_{x}|^{2} + |D^{-1}_{x} v_{y}|^{2} \right)\,dx dy-\frac{1}{q}\int_{\mathbb{R}^2}|u|^q\,dxdy=J(v),\quad  \mbox{for}\ v=\mathcal{H}(u, t)(x,y).
$$
Adapting some ideas from \cite{Jean}, we are going to prove that $\tilde{J}$ on $S(a) \times \mathbb{R}$ possesses a  mountain-pass geometrical structure.
\begin{lemma}\label{M1}
    Let $u \in S(a)$ be fixed. Then, \\
    (i) $\displaystyle \int_{\mathbb{R}^2}\left| (\mathcal{H}(u, t))_x\right|^2 dxdy+\int_{\mathbb{R}^2}\left|D_x^{-1}(\mathcal{H}(u, t))_y\right|^2 dxdy \rightarrow 0$ and $J(\mathcal{H}(u, t)) \rightarrow 0$ as $t \rightarrow-\infty$;\\
    (ii) $\displaystyle \int_{\mathbb{R}^2}\left| (\mathcal{H}(u, t))_x\right|^2 dxdy+\int_{\mathbb{R}^2}\left|D_x^{-1}(\mathcal{H}(u, t))_y\right|^2 dxdy \rightarrow +\infty$ and $J(\mathcal{H}(u, t)) \rightarrow-\infty$ as $t \rightarrow+\infty$.
\end{lemma}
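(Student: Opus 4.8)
The plan is to reduce everything to the explicit scaling identities computed immediately before the statement and then to analyze two competing exponential terms. First I would fix $u \in S(a)$ and abbreviate the two positive quantities
$$
A := \int_{\mathbb{R}^2}\left(|u_x|^2 + |D_x^{-1} u_y|^2\right)dxdy, \qquad B := \int_{\mathbb{R}^2}|u|^q\,dxdy,
$$
noting that $A>0$ since $u\in S(a)$ with $a>0$ forces $u\neq 0$, and likewise $B>0$. With this notation the scaling computations preceding the lemma give
$$
\int_{\mathbb{R}^2}|(\mathcal{H}(u,t))_x|^2\,dxdy + \int_{\mathbb{R}^2}|D_x^{-1}(\mathcal{H}(u,t))_y|^2\,dxdy = e^{\frac{4}{3}t}A
$$
and
$$
J(\mathcal{H}(u,t)) = \frac{e^{\frac{4}{3}t}}{2}A - \frac{e^{(q-2)t}}{q}B,
$$
so that the entire statement becomes a question about the behaviour of these exponentials.

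For part (i), as $t\to-\infty$ both exponents $\tfrac{4}{3}$ and $q-2$ are strictly positive (the latter because $q>\tfrac{10}{3}>2$), hence $e^{\frac{4}{3}t}A\to 0$ and $e^{(q-2)t}B\to 0$. This yields at once the vanishing of the gradient-type quantity and $J(\mathcal{H}(u,t))\to 0$.

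For part (ii), as $t\to+\infty$ the limit $e^{\frac{4}{3}t}A\to+\infty$ is immediate from $A>0$. The crux — and the reason a mountain-pass geometry is available precisely in the supercritical regime — is the strict inequality $q-2>\tfrac{4}{3}$, equivalently $q>\tfrac{10}{3}$, which lets the nonlinear term dominate. Factoring out the leading exponential I would write
$$
J(\mathcal{H}(u,t)) = e^{\frac{4}{3}t}\left(\frac{A}{2} - \frac{B}{q}\,e^{(q-\frac{10}{3})t}\right),
$$
and since $q-\tfrac{10}{3}>0$ the parenthetical factor tends to $-\infty$ while $e^{\frac{4}{3}t}\to+\infty$, forcing $J(\mathcal{H}(u,t))\to-\infty$.

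I do not expect any genuine obstacle here beyond bookkeeping; the argument is a direct substitution followed by elementary limits of exponentials. The only points that deserve care are recording $A,B>0$ so that no limit degenerates, and isolating the exponent comparison $q-2>\tfrac{4}{3}$ as the structural feature that separates the supercritical case treated here from the critical threshold $q=\tfrac{10}{3}$, where the two exponents coincide and $J(\mathcal{H}(u,t))$ instead remains bounded.
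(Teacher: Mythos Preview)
Your proof is correct and follows essentially the same route as the paper's: both arguments simply invoke the scaling identities $\|\mathcal{H}(u,t)\|_0^2 = e^{\frac{4}{3}t}\|u\|_0^2$ and $|\mathcal{H}(u,t)|_q^q = e^{(q-2)t}|u|_q^q$ and then read off the limits, using $q>\tfrac{10}{3}$ for part (ii). Your factorization $J(\mathcal{H}(u,t)) = e^{\frac{4}{3}t}\bigl(\tfrac{A}{2} - \tfrac{B}{q}e^{(q-\frac{10}{3})t}\bigr)$ is a slightly cleaner way to exhibit the dominance of the nonlinear term than the paper's bare assertion, but the content is the same.
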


\begin{proof} \mbox{} By direct calculation,
    \begin{equation} \label{CONV0}\int_{\mathbb{R}^2}\vert \mathcal{H}(u, t)(x, y)\vert^{2}\,dx=a^{2}, \quad \int_{\mathbb{R}^2}|\mathcal{H}(u, s)(x, y)|^q\,dxdy=e^{(q-2)t}\int_{\mathbb{R}^2}|u|^q\,dxdy,
    \end{equation}
    \begin{equation}\label{INE1}
        \int_{\mathbb{R}^2}|\left(\mathcal{H}(u, s)(x, y)\right)_x|^2\,dxdy=e^{\frac{4}{3}t}\int_{\mathbb{R}^2}|u_x|^2\,dxdy
    \end{equation}
    and
    \begin{equation} \label{INE2}
        \int_{\mathbb{R}^2}|D_x^{-1}(\mathcal{H}(u, t))_y|^2\,dxdy=e^{\frac{4}{3}t}\int_{\mathbb{R}^2}|u(x,y)|^2\,dxdy.
    \end{equation}
    By the equalities above, we obtain the limits below
    \begin{equation} \label{CONV1}
        \int_{\mathbb{R}^2}|\left(\mathcal{H}(u, t)(x, y)\right)_x|^2\,dxdy\rightarrow 0 \quad \mbox{and} \quad\int_{\mathbb{R}^2}|D_x^{-1}(\mathcal{H}(u, t))_y|^2\,dxdy \to 0 \quad  \mbox{as} \quad t \to -\infty,
    \end{equation}
    and
    $$
    \int_{\mathbb{R}^2}|\mathcal{H}(u, t)(x, y)|^q\,dxdy=e^{(q-2)t}\int_{\mathbb{R}^2}|u|^q\,dxdy \to 0 \quad \mbox{as} \quad t \to -\infty,
    $$
    which lead to
    $$
    J(\mathcal{H}(u, t))\rightarrow 0 \quad \mbox{as} \quad  t\rightarrow -\infty,
    $$
    showing $(i)$.

    In order to prove $(ii)$, observe that
    $$
    J(\mathcal{H}(u, t))= \frac{e^{\frac{4}{3}t}}{2}\int_{\mathbb{R}^{2}}\left( |u_{x}|^{2}+|D^{-1}_{x} u_{y}|^{2}\right) dx dy-\frac{e^{(q-2)t}}{q}\int_{\mathbb{R}^2} |u|^q \,dxdy.
    $$
    Since $q>\frac{10}{3}$, one has
    $$
    J(\mathcal{H}(u, t))\rightarrow -\infty \quad \mbox{as} \quad t\rightarrow +\infty.
    $$
\end{proof}

\begin{lemma}  \label{PJ1} There exists $K(a)>0$ small enough such that
    $$
    0<\sup_{u\in A} J(u)<\inf_{u\in B} J(u)
    $$
    where
    $$
    A=\left\{u\in S(a), \int_{\mathbb{R}^{2}} \left(|u_{x}|^{2} + |D^{-1}_{x} u_{y}|^{2} \right)dx dy\leq K(a) \right\}
    $$
    and
    $$
    B=\left\{u\in S(a), \int_{\mathbb{R}^{2}} \left(|u_{x}|^{2} + |D^{-1}_{x} u_{y}|^{2} \right)dx dy=2K(a) \right\}.
    $$
\end{lemma}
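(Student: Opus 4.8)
The plan is to read off the mountain-pass geometry directly from the Gagliardo--Nirenberg inequality of Lemma \ref{L1}, exploiting that in the $L^2$-supercritical range the nonlinear term is superquadratic in $\|\cdot\|_0$. Throughout I write $\|u\|_0^2=\int_{\mathbb{R}^2}(|u_x|^2+|D_x^{-1}u_y|^2)\,dxdy$ as in \eqref{Norma0}, and I set $\theta:=\tfrac{q\beta}{2}=\tfrac{3q}{4}-\tfrac32$ with $\beta=\tfrac32-\tfrac3q$. The key algebraic observation is that $\theta=1$ exactly at $q=\tfrac{10}{3}$, so that $\theta>1$ for every $q\in(\tfrac{10}{3},6)$. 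For $u\in S(a)$, Lemma \ref{L1} gives $|u|_q^q\le C_q a^{(1-\beta)q}\|u\|_0^{2\theta}$, whence
$$
J(u)=\tfrac12\|u\|_0^2-\tfrac1q|u|_q^q\ \ge\ \tfrac12\|u\|_0^2-\tfrac{C_q a^{(1-\beta)q}}{q}\|u\|_0^{2\theta},
$$
while trivially $J(u)\le\tfrac12\|u\|_0^2$. I set $C:=\tfrac{C_q a^{(1-\beta)q}}{q}$, a finite constant once $a$ is fixed.

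Next I would carry out the two one-sided estimates. On $A$ the upper bound is immediate: the constraint $\|u\|_0^2\le K(a)$ forces $J(u)\le\tfrac12 K(a)$, so $\sup_{u\in A}J(u)\le\tfrac12 K(a)$. On $B$, inserting $\|u\|_0^2=2K(a)$ into the lower bound above,
$$
\inf_{u\in B}J(u)\ \ge\ K(a)-C\,(2K(a))^{\theta}\ =\ K(a)\bigl(1-C\,2^{\theta}K(a)^{\theta-1}\bigr).
$$
Since $\theta-1>0$, the quantity $C\,2^{\theta}K(a)^{\theta-1}\to0$ as $K(a)\to0$, so I may fix $K(a)>0$ small enough that $C\,2^{\theta}K(a)^{\theta-1}<\tfrac12$. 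For such $K(a)$ one obtains $\inf_{u\in B}J(u)>\tfrac12 K(a)\ge\sup_{u\in A}J(u)$, which is the required strict separation.

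It remains to verify the strict positivity $\sup_{u\in A}J(u)>0$, and for this I would use the scaling $\mathcal{H}$. Fix any $u_0\in S(a)$ and consider $\mathcal{H}(u_0,t)$ as $t\to-\infty$. By the identities recorded before Lemma \ref{M1}, $\|\mathcal{H}(u_0,t)\|_0^2=e^{\frac43 t}\|u_0\|_0^2\to0$, so $\mathcal{H}(u_0,t)\in A$ for all $t$ sufficiently negative; moreover
$$
J(\mathcal{H}(u_0,t))=\tfrac{e^{\frac43 t}}{2}\|u_0\|_0^2-\tfrac{e^{(q-2)t}}{q}|u_0|_q^q,
$$
and because $q-2>\tfrac43$ when $q>\tfrac{10}{3}$, the ratio of the second term to the first is $e^{(q-2-\frac43)t}\to0$ as $t\to-\infty$. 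Hence $J(\mathcal{H}(u_0,t))>0$ for $t$ negative enough, giving $\sup_{u\in A}J(u)>0$ and, together with the previous paragraph, $0<\sup_{u\in A}J(u)<\inf_{u\in B}J(u)$.

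The argument is essentially bookkeeping, and the one genuine point to get right is the sign condition $\theta>1$ (equivalently $q>\tfrac{10}{3}$): it is precisely this superquadratic behaviour of the nonlinearity relative to $\|\cdot\|_0$ that lets the quadratic term dominate near the origin and builds the barrier on $B$. The only compatibility one must check is that the two requirements $\sup_{u\in A}J>0$ and $\sup_{u\in A}J<\inf_{u\in B}J$ hold simultaneously; this is automatic, since the separation is controlled by the single small parameter $K(a)$ whereas the positivity persists for every $K(a)>0$. I therefore expect no serious obstacle beyond choosing $K(a)$ small.
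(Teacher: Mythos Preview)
Your proof is correct and follows essentially the same approach as the paper: both use the Gagliardo--Nirenberg inequality of Lemma \ref{L1} and the fact that $q\beta>2$ to show the nonlinear term is lower order in $\|\cdot\|_0$ near the origin. The only minor difference is in establishing $\sup_{u\in A}J(u)>0$: the paper reuses the same GN lower bound $J(u)\ge\tfrac12\|u\|_0^2-C_1\|u\|_0^{q\beta}$ to show directly that $J(u)>0$ for \emph{every} $u\in A$ once $K(a)$ is small, whereas you instead exhibit a single element of $A$ with positive energy via the scaling $\mathcal{H}(u_0,t)$ as $t\to-\infty$. Both are valid; the paper's route is marginally more economical since it recycles the estimate already in hand, while yours has the small advantage that positivity holds for every $K(a)>0$ without a further smallness condition.
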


\begin{proof}
    Using Lemma \ref{L1} again, there exists a constant $C_1>0$,
    depending only on $ a $ and $ q $, such that
    $$
    J(u) \geq \frac{1}{2} \int_{\mathbb{R}^{2}} \left( |u_{x}|^{2} + |D^{-1}_{x} u_{y}|^{2} \right) \, dx \, dy -C_1\left(\int_{\mathbb{R}^2}\left(\left|u_x\right|^2+\left|D_x^{-1} u_y\right|^2\right) d xdy\right)^{\frac{q\beta}{2}}.
    $$
    {For $K(a)>0$ small enough,  this implies that $J(u)>0$ due to $q \beta > 2$.} From this, we must have $\sup _{u \in A} J(u)>0$.

    {Next, fix $u \in A$ and $v \in B$, so $\displaystyle \int_{\mathbb{R}^{2}} \left(|u_{x}|^{2} + |D^{-1}_{x} u_{y}|^{2} \right)dx dy\leq K(a)$ and $\displaystyle \int_{\mathbb{R}^{2}} \left(|v_{x}|^{2} + |D^{-1}_{x} v_{y}|^{2} \right)dx dy=2K(a)$.} By Lemma \ref{L1},
    $$
    \begin{aligned}
        J(v) - J(u) &= \frac{1}{2} \int_{\mathbb{R}^{2}} \left( |v_{x}|^{2} + |D^{-1}_{x} v_{y}|^{2} \right) \, dx \, dy
        - \frac{1}{2} \int_{\mathbb{R}^{2}} \left( |u_{x}|^{2} + |D^{-1}_{x} u_{y}|^{2} \right) \, dx \, dy \\
        &\quad - \frac{1}{q} \int_{\mathbb{R}^2} |v|^q \, dx \, dy
        + \frac{1}{q} \int_{\mathbb{R}^2} |u|^q \, dx \, dy \\
        &\geq \frac{1}{2}K(a)-\frac{1}{q} \int_{\mathbb{R}^2} |v|^q \, dx \, dy.
    \end{aligned}
    $$
    Therefore, there exists a constant $C_2$
    depending only on $ a $ and $ q $, such that
    \[
    J(v) - J(u) \geq \frac{1}{2}K(a) - C_2 \left(K(a)\right)^{\frac{q\beta}{2}},
    \]
    where \( q > \frac{10}{3} \) and \( \beta = \frac{3}{2} - \frac{3}{q} \). Since \( q\beta > 2 \), by choosing \( K(a) > 0 \) sufficiently small such that
    \[
    \frac{1}{2}K(a) - C_2\left(K(a)\right)^{\frac{q\beta}{2}} > 0,
    \]
    {we obtain $\sup_{u\in A} J(u)<\inf_{u\in B} J(u)$. Since $J(u) > 0$ for $u \in A$, the result follows.}
\end{proof}

In what follows, {fix $u_0 \in S(a)$.  by Lemma \ref{M1}, there exist  $s_1<0$ and $s_2>0$ such that  $u_1=\mathcal{H}(s_1,u_0)$ and $u_2=\mathcal{H}(s_2,u_0)$ satisfy}
$$
\| u_1\|^2_0<\frac{K(a)}{2}, \,\, \| u_2\|^2_0>2K(a),\,\, J(u_1)>0\quad \mbox{and} \quad J(u_2)<0.
$$
Following  \cite{Jean}, define the following mountain pass level
$$
\gamma(a)=\inf_{h \in \Gamma}\max_{t \in [0,1]}J(h(t))>0
$$
where
$$
\Gamma=\left\{h \in C([0,1],S(a)): h(0)=u_1 \,\,\mbox{and} \,\, h(1)=u_2 \right\}.
$$
By Lemma \ref{PJ1},
$$
\max_{t \in [0,1]}J(h(t))>\max \left\{J(u_1),J(u_2)\right\}.
$$
{Let $(u_n)$ be a Palais-Smale (PS) sequence for $J|_{S(a)}$ at level $\gamma(a)$, given by $u_n = \mathcal{H}(v_n, s_n)$, where $(v_n, s_n)$ is a (PS) sequence for $\tilde{J}$ as in \cite[Proposition 2.2]{Jean}. Since}
$$
{\partial_s}\tilde{J}(v_n,s_n) \to 0 \quad \mbox{as} \quad n \to +\infty,
$$
it follows that
\begin{equation} \label{EQ1111**}
    P(u_n)\to 0 \quad \mbox{as} \quad n \to +\infty,
\end{equation}
where
\begin{equation} \label{EQ111**}
    P(u)=\frac{2}{3}\int_{\mathbb{R}^{2}}\left( |u_{x}|^{2} +|D^{-1}_{x} u_{y}|^{2}\right) dx dy-\frac{q-2}{q}\int_{\mathbb{R}^2} |u|^q \,dxdy.
\end{equation}
Next, we show that if a $ (PS) $ sequence $(u_n)$ satisfies \eqref{EQ1111**}, then $(u_n)$ is bounded in $X$.
\begin{lemma}\label{Bounded}
    Let $(u_n)$ be a $ (PS) $ sequence for $J|_{S(a)}$ at level $\gamma(a)$, with $P(u_n)\to0$ as $n\rightarrow +\infty$.
    Then $(u_n)$ is bounded in $X$, and up to a subsequence, $u_n \rightharpoonup u$ in $X$. Moreover, there exists $\lambda<0$ such that
    $u $ is a weak solution of the equation
    $$
    \left(-u_{x x}+D_x^{-2} u_{y y}-\lambda u-|u|^{q-2}u\right)_x=0 \,\,\text{in} \,\mathbb{R}^2.
    $$
\end{lemma}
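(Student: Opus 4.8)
The plan is to first extract boundedness from the two pieces of information available, namely $J(u_n)\to\gamma(a)$ and $P(u_n)\to0$, and only afterwards produce the Lagrange multiplier and pass to the weak limit. Writing $\|u_n\|_0^2=\int_{\mathbb{R}^2}(|(u_n)_x|^2+|D_x^{-1}(u_n)_y|^2)\,dxdy$ and $|u_n|_q^q=\int_{\mathbb{R}^2}|u_n|^q\,dxdy$, the key observation is that the combination $(q-2)J(u_n)-P(u_n)$ cancels the $L^q$-term exactly:
\[
(q-2)J(u_n)-P(u_n)=\Big(\frac{q-2}{2}-\frac{2}{3}\Big)\|u_n\|_0^2=\frac{3q-10}{6}\,\|u_n\|_0^2 .
\]
Since $q>\frac{10}{3}$ we have $3q-10>0$, so $\|u_n\|_0^2=\frac{6}{3q-10}\big((q-2)J(u_n)-P(u_n)\big)\to\frac{6(q-2)\gamma(a)}{3q-10}$ is bounded; together with the constraint $|u_n|_2^2=a^2$ this gives that $\|u_n\|^2=\|u_n\|_0^2+a^2$ is bounded in $X$. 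As $X$ is a Hilbert space, up to a subsequence $u_n\rightharpoonup u$ in $X$.

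Next I would produce the Lagrange multiplier. Since $(u_n)$ is a (PS) sequence for $J|_{S(a)}$, the Lagrange multiplier rule gives $\lambda_n\in\mathbb{R}$ with $J'(u_n)-\lambda_n u_n\to0$ in $X^*$; testing this relation with $u_n$ and using $\langle J'(u_n),u_n\rangle=\|u_n\|_0^2-|u_n|_q^q$ together with $|u_n|_2^2=a^2$ yields $\lambda_n a^2=\|u_n\|_0^2-|u_n|_q^q+o_n(1)$. From the two limits above one computes $\|u_n\|_0^2\to L:=\frac{6(q-2)\gamma(a)}{3q-10}$ and $|u_n|_q^q=\frac{q}{2}\|u_n\|_0^2-qJ(u_n)\to M:=\frac{4q\gamma(a)}{3q-10}$, so that
\[
\lambda_n\longrightarrow \lambda:=\frac{L-M}{a^2}=\frac{2(q-6)\gamma(a)}{a^2(3q-10)} .
\]
Because $\gamma(a)>0$, $q<6$ and $3q-10>0$, the numerator is negative and the denominator positive, whence $\lambda<0$, which is exactly the sign claimed.

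Finally I would pass to the limit in $J'(u_n)-\lambda_n u_n\to0$, i.e. for every $\phi\in X$,
\[
\langle u_n,\phi\rangle_0-\lambda_n\int_{\mathbb{R}^2}u_n\phi\,dxdy-\int_{\mathbb{R}^2}|u_n|^{q-2}u_n\phi\,dxdy=o_n(1)\|\phi\|.
\]
The linear terms are immediate: $u_n\rightharpoonup u$ in $X$ gives $\langle u_n,\phi\rangle_0\to\langle u,\phi\rangle_0$, while $\lambda_n\to\lambda$ and $u_n\rightharpoonup u$ in $L^2(\mathbb{R}^2)$ give $\lambda_n\int u_n\phi\to\lambda\int u\phi$. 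The hard part will be the nonlinear term: here I would invoke the local compactness of the embedding $X\hookrightarrow L^q_{\mathrm{loc}}(\mathbb{R}^2)$ to obtain, up to a subsequence, $u_n\to u$ a.e. in $\mathbb{R}^2$; since $(|u_n|^{q-2}u_n)$ is bounded in $L^{q'}(\mathbb{R}^2)$ (with $q'=q/(q-1)$) by the boundedness of $(u_n)$ in $L^q(\mathbb{R}^2)$, the a.e. convergence upgrades to $|u_n|^{q-2}u_n\rightharpoonup|u|^{q-2}u$ in $L^{q'}(\mathbb{R}^2)$, and pairing with $\phi\in L^q(\mathbb{R}^2)$ yields $\int|u_n|^{q-2}u_n\phi\to\int|u|^{q-2}u\phi$. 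Passing to the limit then gives $\langle u,\phi\rangle_0-\lambda\int u\phi-\int|u|^{q-2}u\phi=0$ for all $\phi\in X$, i.e. $(u,\lambda)$ solves the stated equation weakly. The delicate point throughout is precisely this compactness of the nonlinear term, where the anisotropic and nonlocal nature of the norm of $X$ must be handled with care.
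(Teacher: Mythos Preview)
Your proof is correct and follows essentially the same strategy as the paper: the same linear combination $(q-2)J(u_n)-P(u_n)$ is used to kill the $L^q$-term and bound $\|u_n\|_0^2$, the Lagrange multiplier rule produces $(\lambda_n)$, and the sign of $\lambda$ is deduced from $q\in(\tfrac{10}{3},6)$. In fact your version is slightly sharper than the paper's: by computing the exact limits $\|u_n\|_0^2\to L$ and $|u_n|_q^q\to M$ you obtain the explicit value $\lambda=\frac{2(q-6)\gamma(a)}{a^2(3q-10)}$, which makes the strict inequality $\lambda<0$ transparent via $\gamma(a)>0$, whereas the paper only records the inequality $(1-\tfrac{2q}{3(q-2)})\|u_n\|_0^2=\lambda_na^2+o_n(1)$ and reads off the sign.
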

\begin{proof}

    Since $(u_n)$ is a $(PS)$ sequence for $J|_{S(a)}$ at level $\gamma(a)$, we have
    \begin{equation} \label{gamma(a)}
        J(u_n) \to \gamma(a) \quad \mbox{as} \quad n \to +\infty,
    \end{equation}
    and
    {
        \begin{equation*} \label{der11}
            \|J'|_{S(a)}(u_n)\| \to 0 \quad \mbox{as} \quad n \to +\infty.
    \end{equation*}}
    Define the functional $\Psi:X \to \mathbb{R}$ by
    $$
    \Psi(u)=\frac{1}{2}\int_{\mathbb{R}^{2}}u^{2}dxdy,
    $$
    it follows that $S(a)=\Psi^{-1}(\{a^2/2\})$. Then, by Willem \cite[Proposition 5.12]{Willem}, there exists $(\lambda_n) \subset \mathbb{R}$ such that
    \begin{equation}\label{eq-J-psi}
            \|J'(u_n)-\lambda_n\Psi'(u_n)\|_{X^{*}} \to 0 \quad \mbox{as} \quad n \to +\infty.
    \end{equation}
    Hence,
    \begin{equation} \label{EQ10}
        -(u_n)_{xx}+D^{-2}_{x} (u_n)_{yy}-\vert u_n\vert^{q-2}u_n=\lambda_n u_n\ + o_n(1) \quad \mbox{in} \quad X^*.
    \end{equation}
    Moreover, another important limit involving the sequence $(u_n)$ is
    \begin{equation} \label{EQ1**}
        P(u_n)=\frac{2}{3}\int_{\mathbb{R}^{2}}\left( |(u_n)_{x}|^{2} +|D^{-1}_{x} (u_n)_{y}|^{2}\right) dx dy-\frac{q-2}{q}\int_{\mathbb{R}^2} |u_n|^q \,dxdy\to 0 \quad \mbox{as} \quad n \to +\infty,
    \end{equation}
    which is obtained using the limit below
    $$
    {\partial_s}\tilde{J}(v_n,s_n) \to 0 \quad \mbox{as} \quad n \to +\infty.
    $$
    From \eqref{gamma(a)} and  \eqref{EQ1**},  there exists a positive constant $C>0$ independent of $n$ such that
    \begin{equation} \label{BDD1}
        \vert (q-2)J(u_n)-P(u_n)\vert\leq C,
    \end{equation}
    that is
    \begin{equation} \label{BDD2}
        \left(\frac{q-2}{2}-\frac{2}{3}\right)\int_{\mathbb{R}^{2}}\left( |(u_n)_{x}|^{2} +|D^{-1}_{x} (u_n)_{y}|^{2} \right)dx dy\leq C.
    \end{equation}
    Since $q>\frac{10}{3}$, it is easy to see that $\left(\displaystyle\int_{\mathbb{R}^{2}} |(u_n)_{x}|^{2} dx+|D^{-1}_{x} (u_n)_{y}|^{2} dx dy\right)$ is bounded. Moreover, using \eqref{gamma(a)} again, it follows that $\left(\displaystyle \int_{\mathbb{R}^2} |u_n|^q \,dxdy\right)$ is also bounded.
    Recalling that the sequence $\{\lambda_n\}$  must satisfy the equality below
    $$
    \lambda_n=\frac{1}{|u_n|^{2}_{2}}\left\{\int_{\mathbb{R}^{2}}( |(u_n)_{x}|^{2}+|D^{-1}_{x} (u_n)_{y}|^{2}) dx dy-\int_{\mathbb{R}^2} |u_n|^q \,dxdy\right\}+o_n(1)
    $$
    or equivalently,
    \begin{equation} \label{lambdan}
        \lambda_n=\frac{1}{a^{2}}\left\{\int_{\mathbb{R}^{2}}( |(u_n)_{x}|^{2}+|D^{-1}_{x} (u_n)_{y}|^{2}) dx dy-\int_{\mathbb{R}^2} |u_n|^q \,dxdy\right\}+o_n(1),
    \end{equation}
    we also deduce the boundedness of the sequence $(\lambda_n)$. From \eqref{EQ10} and \eqref{EQ1**},
    $$
    (1-\frac{\frac{2}{3}q}{q-2})\left(\int_{\mathbb{R}^{2}} |(u_n)_{x}|^{2} dx dy+\int_{\mathbb{R}^{2}}|D^{-1}_{x} (u_n)_{y}|^{2} dx dy\right)=\lambda_n\int_{\mathbb{R}^2} |u_n|^2 \,dxdy+o_n(1).
    $$
    {Since $q \in (\frac{10}{3},6)$,} we have that $(1-\frac{\frac{2}{3}q}{q-2})<0$, from where it follows that, for some subsequence, still denoted by $(\lambda_n)$,
    \begin{equation}\label{new}
        \lambda_n \to \lambda<0 \quad \mbox{as} \quad n \to +\infty.
    \end{equation}
    Assuming that $u_n\rightharpoonup u$ in $X$, we derive that  $u $ is a weak solution of
    $$
    \left(-u_{x x}+D_x^{-2} u_{y y}-\lambda u-|u|^{q-2}u\right)_x=0, \quad \mbox{in} \quad \mathbb{R}^2.
    $$
\end{proof}

To obtain the compactness of the $ (PS) $ sequence, we shall draw additional variational
characterizations of $\gamma(a)$. Next, we will prove that
$$
\gamma(a)=\inf _{u \in \kappa(a)}J(u)
$$
{where
    $$
    \kappa(a)=\left\{u \in S(a), \,\,J'\mid_{ S(a)}(u)=0\right\}.
    $$}
To this end, we shall present some preliminary results.
\begin{lemma}\label{Lem-poh}
    If $u\in S(a)$ is a weak solution to the equation
    \begin{equation}\label{EQ1}
        \left(-u_{x x}+D_x^{-2} u_{y y}-\lambda u-|u|^{q-2}u\right)_x=0.
    \end{equation}
    then it belongs to the set
    \begin{equation}
        \mathcal{P}(a)=\left\{u \in S(a),P(u)=0\right\},
    \end{equation}
    where
    \begin{equation}
        P(u)=\frac{2}{3}\int_{\mathbb{R}^{2}}\left( |u_{x}|^{2} +|D^{-1}_{x} u_{y}|^{2}\right) dx dy-\frac{q-2}{q}\int_{\mathbb{R}^2} |u|^qdxdy.
    \end{equation}
\end{lemma}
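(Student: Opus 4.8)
The plan is to recognize $P(u)=0$ as the Pohozaev identity associated with the scaling $\mathcal{H}$, and to derive it by testing the weak formulation of \eqref{EQ1} against the infinitesimal generator of $\mathcal{H}$. Concretely, set
\[
w:=\frac{d}{dt}\Big|_{t=0}\mathcal{H}(u,t)=u+\tfrac{2}{3}x\,u_x+\tfrac{4}{3}y\,u_y,
\]
which is the natural dilation field for the anisotropic scaling $(x,y)\mapsto(e^{2t/3}x,e^{4t/3}y)$ together with the $e^{t}$ prefactor. Since $u$ is a weak solution with multiplier $\lambda$, it satisfies $\langle u,\phi\rangle_0-\lambda\int_{\mathbb{R}^2}u\phi\,dxdy-\int_{\mathbb{R}^2}|u|^{q-2}u\,\phi\,dxdy=0$ for all admissible $\phi$, and I would choose $\phi=w$.

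First I would evaluate each of the three terms by integration by parts. For the mass term, $\int_{\mathbb{R}^2}uw\,dxdy=0$: this is just $\frac{d}{dt}\big|_{t=0}\frac12\int_{\mathbb{R}^2}|\mathcal{H}(u,t)|^2\,dxdy=\frac{d}{dt}\big|_{t=0}\frac{a^2}{2}$, reflecting that $\mathcal{H}(u,\cdot)$ preserves $S(a)$, so $w$ is tangent to the constraint and the Lagrange term drops out. For the nonlinear term, writing $|u|^{q-2}u\,u_x=\frac1q(|u|^q)_x$ and integrating by parts gives $\int_{\mathbb{R}^2}|u|^{q-2}u\,w\,dxdy=\frac{q-2}{q}\int_{\mathbb{R}^2}|u|^q\,dxdy$. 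For the quadratic form $\langle u,w\rangle_0$, the $u_xw_x$ contribution yields $\frac{2}{3}\int_{\mathbb{R}^2}|u_x|^2\,dxdy$ by a direct integration by parts; for the more delicate $D_x^{-1}$-contribution I would avoid manipulating $D_x^{-1}w_y$ by hand and instead differentiate the exact scaling identity $\int_{\mathbb{R}^2}|D_x^{-1}(\mathcal{H}(u,t))_y|^2\,dxdy=e^{4t/3}\int_{\mathbb{R}^2}|D_x^{-1}u_y|^2\,dxdy$ (already recorded in Section \ref{sec4}) at $t=0$, which immediately gives $\int_{\mathbb{R}^2} D_x^{-1}u_y\,D_x^{-1}w_y\,dxdy=\frac{2}{3}\int_{\mathbb{R}^2}|D_x^{-1}u_y|^2\,dxdy$. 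Hence $\langle u,w\rangle_0=\frac{2}{3}\int_{\mathbb{R}^2}(|u_x|^2+|D_x^{-1}u_y|^2)\,dxdy$, and substituting the three evaluations into the weak formulation produces exactly $P(u)=0$.

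The main obstacle is analytic rather than algebraic: a priori $w=u+\frac23xu_x+\frac43yu_y$ need not belong to $X$, since the weighted terms $xu_x,\,yu_y$ require extra decay and regularity of $u$, and the integrations by parts above must be justified at this level of generality. I would resolve this by first establishing (or invoking) regularity and decay estimates for weak solutions of the generalized Kadomtsev--Petviashvili equation in $\mathbb{R}^2$ --- precisely the point where the restriction to $N=2$ is used --- and then either verifying $w\in X$ directly or running the computation on a suitable truncation/approximation and passing to the limit. An equivalent and perhaps cleaner packaging is to set $g(t):=J(\mathcal{H}(u,t))=\frac{e^{4t/3}}{2}\int_{\mathbb{R}^2}(|u_x|^2+|D_x^{-1}u_y|^2)\,dxdy-\frac{e^{(q-2)t}}{q}\int_{\mathbb{R}^2}|u|^q\,dxdy$, note that $g$ is smooth with $g'(0)=P(u)$, and argue $g'(0)=0$ from the chain rule $g'(0)=\langle J'(u),w\rangle=\lambda\langle \Psi'(u),w\rangle=0$; this reduces to the same differentiability of $t\mapsto\mathcal{H}(u,t)$ in $X$, so the regularity input is unavoidable but isolated.
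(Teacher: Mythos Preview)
Your approach is correct but genuinely different from the paper's. The paper does not test the equation against the dilation field $w$; instead it invokes a classical Pohozaev identity for \eqref{EQ1} established elsewhere (\cite[Lemma 2.3]{AGM}), namely
\[
\frac{1}{2}\|u\|_0^2-\frac{3\lambda}{2}|u|_2^2-\frac{3}{q}|u|_q^q=0,
\]
and combines it linearly with the Nehari identity $\|u\|_0^2-\lambda|u|_2^2-|u|_q^q=0$ (obtained by taking $\phi=u$ in the weak formulation) so as to eliminate $\lambda$. This two-identity route is shorter on the page and exports the regularity and boundary-term issues to the cited reference. Your route, by contrast, produces $P(u)=0$ in a single step and, more importantly, explains \emph{why} $P$ has exactly this form: it is the derivative of $t\mapsto J(\mathcal{H}(u,t))$ at $t=0$, and the Lagrange term drops because $\mathcal{H}$ preserves $S(a)$. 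The price you pay is that the regularity justification (either $w\in X$ or an approximation argument) sits in your proof rather than in a citation; you identified this correctly, and it is the same analytic input that \cite{AGM} must also supply.
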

\begin{proof}
    {As proved in \cite[Lemma 2.3]{AGM},}
    if $u$ is a weak solution of equation \eqref{EQ1}, it satisfies the following Pohozaev identity:
    \begin{equation}\label{eq-Q1}
        \frac{1}{2}\int_{\mathbb{R}^{2}}\left( |u_{x}|^{2}+|D^{-1}_{x} u_{y}|^{2} \right)dx dy-\frac{3\lambda}{2}\int_{\mathbb{R}^{2}} u^2dxdy-\frac{3}{q}\int_{\mathbb{R}^{2}} |u|^qdxdy=0 .
    \end{equation}
    On the other hand, since $u$ is a weak solution, it also satisfies
    \begin{equation}\label{eq-J1}
        \int_{\mathbb{R}^{2}}\left( |u_{x}|^{2}+|D^{-1}_{x} u_{y}|^{2} \right)dx dy-\lambda \int_{\mathbb{R}^{2}} u^2dxdy-\int_{\mathbb{R}^{2}} |u|^qdxdy =0.
    \end{equation}
    Combining \eqref{eq-Q1} and \eqref{eq-J1}, we obtain
 $$
    \frac{2}{3}\int_{\mathbb{R}^{2}}\left( |u_{x}|^{2} +|D^{-1}_{x} u_{y}|^{2}\right) dx dy=\frac{q-2}{q}\int_{\mathbb{R}^2} |u|^qdxdy,
    $$
   that is, $P(u)=0$. Hence, $u\in \mathcal{P}(a)$.
\end{proof}	
The proof of the following Lemma \ref{arc} is similar to that of \cite[Lemma 2.8]{Jean}, here we omit it for brevity.
\begin{lemma}\label{arc}
    let $K(a)$ be as defined in Lemma \ref{PJ1}. Then the sets
    $$
    \begin{aligned}
        & A=\left\{u \in S(a),\|u\|_0^2 \leq K(a)\right\} \\
        & {\mathcal{C}=\left\{u \in S(a),\| u\|_0^2 \geq 2 K(a) \text { and } J(u) \leq 0\right\}}
    \end{aligned}
    $$
    are arc-connected. In particular, for any $ v_1 \in A$ and $v_2 \in \mathcal{C}$, we have
    $$
    \gamma(a) = \inf_{g \in \Gamma_{(v_1,v_2)}} \max_{t \in [0,1]} F(g(t))
    $$
    where
    $$
    \Gamma_{(v_1, v_2)} = \{g \in C([0,1], S(a)) : g(0) = v_1, g(1) = v_2\}.
    $$
\end{lemma}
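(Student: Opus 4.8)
The plan is to exploit the scaling map $\mathcal{H}(u,t)(x,y)=e^{t}u(e^{\frac{2}{3}t}x,e^{\frac{4}{3}t}y)$, which preserves the $L^2$-norm (hence maps $S(a)$ into itself), obeys the group law $\mathcal{H}(\mathcal{H}(u,s),t)=\mathcal{H}(u,s+t)$, multiplies $\|u\|_0^2$ by $e^{\frac{4}{3}t}$, and satisfies $J(\mathcal{H}(u,t))=\frac{e^{4t/3}}{2}\|u\|_0^2-\frac{e^{(q-2)t}}{q}|u|_q^q$. First I would record that $S(a)$ is itself arc-connected: for $u,v\in S(a)$ that are not $L^2$-antipodal, the normalized segment $\theta\mapsto a\frac{(1-\theta)u+\theta v}{|(1-\theta)u+\theta v|_2}$ is a continuous path in $S(a)$ (the denominator is continuous and bounded below, and $X\hookrightarrow L^2$), while the antipodal case is handled by routing through a third point $w\in S(a)\setminus\{\pm u\}$, which exists because $X$ is infinite-dimensional.

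For $A$, given $u,v\in A$ I would take any path $\gamma_0$ in $S(a)$ joining them and push it into $A$ by scaling downward. Setting $\tau(\theta)=\min\{0,\tfrac{3}{4}\log(K(a)/\|\gamma_0(\theta)\|_0^2)\}$ — which is continuous since $\|\gamma_0(\theta)\|_0$ is positive on $S(a)$ (as $\|u\|_0=0$ with $u\in L^2(\mathbb{R}^2)$ forces $u\equiv0$) — the path $\tilde{\gamma}(\theta)=\mathcal{H}(\gamma_0(\theta),\tau(\theta))$ satisfies $\|\tilde{\gamma}(\theta)\|_0^2=e^{4\tau(\theta)/3}\|\gamma_0(\theta)\|_0^2\le K(a)$, so $\tilde{\gamma}(\theta)\in A$; since $u,v\in A$ force $\tau=0$ at the endpoints, $\tilde{\gamma}$ joins $u$ to $v$ inside $A$, using only the joint continuity of $\mathcal{H}$.

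For $\mathcal{C}$ the key observation is that the upward flow stays in $\mathcal{C}$: for $u\in\mathcal{C}$ and $s\ge0$ one has $\|\mathcal{H}(u,s)\|_0^2=e^{4s/3}\|u\|_0^2\ge 2K(a)$, while $J(\mathcal{H}(u,s))\le0$ because $J(u)\le0$ is equivalent to $\frac{q\|u\|_0^2}{2|u|_q^q}\le1\le e^{(q-10/3)s}$ (here $q>\frac{10}{3}$ is used decisively). Thus $s\mapsto\mathcal{H}(u,s)$, $s\in[0,r]$, is a path in $\mathcal{C}$ from $u$ to $\mathcal{H}(u,r)$. Now I would take $\gamma_0$ a path in $S(a)$ from $u$ to $v$; since $\gamma_0([0,1])$ is compact, $\|\gamma_0(\theta)\|_0^2$ and $|\gamma_0(\theta)|_q^q$ are bounded and bounded away from $0$, so for $r$ large the path $g(\theta):=\mathcal{H}(\gamma_0(\theta),r)$ satisfies both $\|g(\theta)\|_0^2\ge2K(a)$ and $J(g(\theta))\le0$ uniformly in $\theta$, i.e. $g(\theta)\in\mathcal{C}$. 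By the group law $g$ joins $\mathcal{H}(u,r)$ to $\mathcal{H}(v,r)$, so concatenating the up-flow from $u$, the path $g$, and the reversed up-flow to $v$ produces a path in $\mathcal{C}$ from $u$ to $v$.

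Finally, for the characterization of $\gamma(a)$, note that the fixed endpoints satisfy $u_1\in A$ and $u_2\in\mathcal{C}$; given any $v_1\in A$ and $v_2\in\mathcal{C}$ I would join $u_1$ to $v_1$ inside $A$ and $v_2$ to $u_2$ inside $\mathcal{C}$ and concatenate with any $g\in\Gamma_{(v_1,v_2)}$. On the $A$-piece $J\le\sup_A J$, on the $\mathcal{C}$-piece $J\le0$, whereas every $g$ runs from $\|\cdot\|_0^2\le K(a)$ to $\|\cdot\|_0^2\ge2K(a)$ and hence meets $B$, so $\max_t J(g(t))\ge\inf_B J>\sup_A J>0$ by Lemma \ref{PJ1}; therefore the appended pieces never raise the maximum and the concatenation leaves $\max_t J$ unchanged. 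Carrying out the same concatenation in both directions gives $\gamma(a)=\inf_{g\in\Gamma_{(v_1,v_2)}}\max_{t\in[0,1]}J(g(t))$. I expect the arc-connectedness of $\mathcal{C}$ to be the main obstacle, because of the two simultaneous constraints $\|\cdot\|_0^2\ge2K(a)$ and $J\le0$; the uniform effect of the upward scaling on both of them — available precisely because $q>\frac{10}{3}$ — together with the compactness of the connecting path in $S(a)$ (needed to pick a single $r$ valid for all $\theta$) is what makes the construction go through.
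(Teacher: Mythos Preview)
Your proof is correct and follows essentially the same strategy as the reference the paper defers to (\cite[Lemma~2.8]{Jean}): use the scaling flow $\mathcal{H}(u,t)$ to push an arbitrary $S(a)$-path down into $A$ or up into $\mathcal{C}$, then exploit Lemma~\ref{PJ1} to show the appended arcs never dominate the maximum. The paper omits all details here, so your write-up is in fact more complete; the only points worth tightening are the joint continuity of $(u,t)\mapsto\mathcal{H}(u,t)$ in $X$ (which you invoke but do not justify) and the argument that $\|u\|_0>0$ on $S(a)$, which follows cleanly from the embedding \eqref{S} since $\|u\|_0=0$ forces $|u|_6=0$.
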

\begin{lemma}\label{Phimax}
    let $u \in S(a)$ be arbitrary but fixed. Then, the function $\Phi_u(t): \R \rightarrow \R$, defined by
    \begin{equation}\label{Phit}
        \Phi_u(t)={J}(\mathcal{H}(u, t))
    \end{equation}
    {attains its unique maximum} at a point $t(u) \in \R$ such that $\mathcal{H}(u, t) \in \mathcal{P}(a)$.
\end{lemma}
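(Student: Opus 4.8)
The plan is to reduce the entire statement to the one-variable function obtained from the scaling identities already recorded above. Writing $A := \|u\|_0^2 = \int_{\mathbb{R}^2}\bigl(|u_x|^2 + |D_x^{-1}u_y|^2\bigr)\,dxdy$ and $B := |u|_q^q = \int_{\mathbb{R}^2}|u|^q\,dxdy$, the formula for $\tilde{J}(u,t)=J(\mathcal{H}(u,t))$ gives
$$
\Phi_u(t) = \frac{A}{2}e^{\frac{4}{3}t} - \frac{B}{q}e^{(q-2)t}.
$$
Since $u \in S(a)$ with $a>0$, we have $u \neq 0$, hence $A>0$ and $B>0$ (if $B=0$ then $u=0$ a.e., contradicting $|u|_2=a$). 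Thus $\Phi_u$ is smooth and is a genuine difference of two exponentials with strictly positive coefficients.

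First I would differentiate and observe the crucial identity linking $\Phi_u'$ to the Pohozaev functional $P$ from \eqref{EQ111**}. Using the scaling identities for $\mathcal{H}(u,t)$, one checks directly that
$$
\Phi_u'(t) = \frac{2A}{3}e^{\frac{4}{3}t} - \frac{(q-2)B}{q}e^{(q-2)t} = P(\mathcal{H}(u,t)).
$$
Since $\mathcal{H}(u,t)\in S(a)$ for every $t$, this shows that the critical points of $\Phi_u$ are exactly those $t$ for which $\mathcal{H}(u,t)\in \mathcal{P}(a)$, which is precisely the conclusion the statement asserts about the maximizer.

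Next I would establish existence, uniqueness, and the maximizing character of the critical point by a monotonicity argument. Factoring,
$$
\Phi_u'(t) = e^{\frac{4}{3}t}\left(\frac{2A}{3} - \frac{(q-2)B}{q}e^{(q-\frac{10}{3})t}\right).
$$
Because $q>\frac{10}{3}$, the exponent $q-\frac{10}{3}$ is positive, so the bracketed term is strictly decreasing in $t$, positive as $t\to-\infty$ and negative as $t\to+\infty$. Hence it vanishes at a unique point $t(u)$, given explicitly by
$$
t(u) = \frac{1}{\,q - \frac{10}{3}\,}\ln\left(\frac{2Aq}{3(q-2)B}\right),
$$
with $\Phi_u'(t)>0$ for $t<t(u)$ and $\Phi_u'(t)<0$ for $t>t(u)$. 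Therefore $\Phi_u$ is strictly increasing on $(-\infty,t(u))$ and strictly decreasing on $(t(u),+\infty)$, so $t(u)$ is its unique global maximizer. (Alternatively one computes $\Phi_u''(t(u)) = \bigl(\tfrac{10}{3}-q\bigr)\tfrac{2A}{3}e^{\frac{4}{3}t(u)}<0$ to see the local maximum, but the monotonicity yields uniqueness directly.) Since $\Phi_u'(t(u)) = P(\mathcal{H}(u,t(u)))=0$ and $\mathcal{H}(u,t(u))\in S(a)$, we conclude $\mathcal{H}(u,t(u))\in \mathcal{P}(a)$, finishing the proof.

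The computation is elementary; the only point requiring care is the role of the $L^2$-supercritical condition $q>\frac{10}{3}$, which is exactly what forces the exponent $q-\frac{10}{3}$ to be positive and hence makes the bracket monotone with a single sign change. Without this hypothesis the geometry of $\Phi_u$ degenerates and uniqueness of the maximum may fail, so the supercriticality is the structural heart of the lemma rather than a technical obstacle.
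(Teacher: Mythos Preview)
Your proof is correct and follows essentially the same approach as the paper: compute $\Phi_u'(t)$, identify it with $P(\mathcal{H}(u,t))$, and use the supercriticality $q>\tfrac{10}{3}$ to obtain a unique sign change and hence a unique global maximum. Your version is more explicit (the factorization, the closed-form $t(u)$, the justification that $A,B>0$), but the underlying argument is identical to the paper's.
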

\begin{proof}
    Clearly
    $$
    \Phi^{'}_u(t)=\frac{2e^{\frac{4}{3}t}}{3}\int_{\mathbb{R}^{2}}\left( |u_{x}|^{2}+|D^{-1}_{x} u_{y}|^{2}\right) dx dy-\frac{(q-2)e^{(q-2)t}}{q}\int_{\mathbb{R}^2} |u|^q \,dxdy.
    $$
    Since $q>\frac{10}{3}$,  there exists a unique $t_0\in \R$ such that
    $\Phi^{'}_u(t_0)=0$, and $\Phi^{'}_u(t)>0$ for $t\in(-\infty,t_0)$, $\Phi^{'}_u(t)<0$ for $t\in(t_0,+\infty)$. Thus, $t_0$ is the  unique maximum point of $    \Phi_u(t) $.
    By \eqref{Phit}, we note that $ \Phi^{'}_u(t)=P(\mathcal{H}(u, t)) $. Since $ \Phi^{'}_u(t_0)=0 $, it follows that $\mathcal{H}(u, t_0)\in\mathcal{P}(a)$.
\end{proof}

\begin{lemma}
    $\gamma(a)=\inf _{u \in\mathcal{P}(a)} J(u)$.
\end{lemma}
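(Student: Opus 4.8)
The plan is to establish the two inequalities $\gamma(a) \ge \inf_{u \in \mathcal P(a)} J(u)$ and $\gamma(a) \le \inf_{u \in \mathcal P(a)} J(u)$ separately. The recurring tool is the fiber map $\Phi_u(t) = J(\mathcal H(u,t))$ studied in Lemma \ref{Phimax}: it satisfies $\Phi_u'(t) = P(\mathcal H(u,t))$, is strictly increasing on $(-\infty, t(u))$ and strictly decreasing on $(t(u),+\infty)$, and $u \in \mathcal P(a)$ holds exactly when $t(u) = 0$, in which case $J(u) = \Phi_u(0) = \max_{t\in\mathbb R} \Phi_u(t)$.

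For the lower bound, I first show that the two fixed endpoints of the paths in $\Gamma$ lie on opposite sides of $\mathcal P(a)$, in the sense of the sign of $P$. On the one hand, since $\|u_1\|_0^2 < K(a)/2$ is small, the Gagliardo--Nirenberg inequality of Lemma \ref{L1} (with $q\beta > 2$) makes the $L^q$-term in $P(u_1)$ of higher order, so $P(u_1) > 0$. On the other hand, from $J(u_2) < 0$ one gets $\int_{\mathbb R^2}|u_2|^q\,dxdy > \frac{q}{2}\|u_2\|_0^2$, whence $P(u_2) < \big(\frac{2}{3} - \frac{q-2}{2}\big)\|u_2\|_0^2 = \frac{10-3q}{6}\|u_2\|_0^2 < 0$ because $q > \frac{10}{3}$. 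Consequently, for any $h \in \Gamma$ the continuous function $t \mapsto P(h(t))$ satisfies $P(h(0)) = P(u_1) > 0 > P(u_2) = P(h(1))$, so by the intermediate value theorem there is $\tau \in (0,1)$ with $h(\tau) \in \mathcal P(a)$. Hence $\max_{t\in[0,1]} J(h(t)) \ge J(h(\tau)) \ge \inf_{u\in\mathcal P(a)} J(u)$, and taking the infimum over $h$ gives $\gamma(a) \ge \inf_{u\in\mathcal P(a)} J(u)$.

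For the upper bound, fix $u \in \mathcal P(a)$, so $t(u) = 0$ and $J(u) = \max_{t} \Phi_u(t)$. By Lemma \ref{M1} I choose $s_1' \ll 0$ and $s_2' \gg 0$ such that $v_1 := \mathcal H(u, s_1') \in A$ and $v_2 := \mathcal H(u, s_2') \in \mathcal C$. The curve $g(\sigma) = \mathcal H\big(u, (1-\sigma)s_1' + \sigma s_2'\big)$, $\sigma \in [0,1]$, is continuous in $S(a)$, joins $v_1$ to $v_2$, and satisfies $J(g(\sigma)) = \Phi_u\big((1-\sigma)s_1' + \sigma s_2'\big) \le \max_t \Phi_u(t) = J(u)$. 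Lemma \ref{arc} identifies $\gamma(a)$ with the mountain pass level taken over all paths joining a point of $A$ to a point of $\mathcal C$; applying it to $g$ yields $\gamma(a) \le \max_{\sigma} J(g(\sigma)) \le J(u)$. Taking the infimum over $u \in \mathcal P(a)$ gives $\gamma(a) \le \inf_{u\in\mathcal P(a)} J(u)$, and together with the lower bound the claim follows.

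I expect the main obstacle to be the lower bound, specifically the extraction of the sign information $P(u_1) > 0 > P(u_2)$, since the sign of $P$ is not part of the definition of the endpoints and must be recovered from the smallness of $\|u_1\|_0$ and from the condition $J(u_2) < 0$ combined with $q > \frac{10}{3}$. The upper bound is a direct construction, but it crucially depends on Lemma \ref{arc} to enlarge the admissible endpoints of $\Gamma$ so that the explicit scaling path through $u$ becomes a competitor.
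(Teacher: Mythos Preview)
Your proof is correct, and for the inequality $\gamma(a)\le\inf_{\mathcal P(a)}J$ it follows exactly the paper's route: build the scaling curve $t\mapsto\mathcal H(u,t)$ through a given $u\in\mathcal P(a)$, use Lemma~\ref{M1} to place its endpoints in $A$ and $\mathcal C$, invoke Lemma~\ref{arc} so that this curve is an admissible competitor, and conclude via Lemma~\ref{Phimax} that the maximum along the curve equals $J(u)$. The paper phrases this by contradiction (assume $J(v)<\gamma(a)$ and reach $\gamma(a)\le J(v)$), but the content is identical.

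Where you differ is that you also supply the reverse inequality $\gamma(a)\ge\inf_{\mathcal P(a)}J$ via the intermediate value theorem applied to $t\mapsto P(h(t))$, after checking $P(u_1)>0>P(u_2)$. The paper's displayed proof actually omits this direction entirely and simply asserts equality after proving only $\gamma(a)\le\inf_{\mathcal P(a)}J$; your argument fills this gap cleanly. Your computation for $P(u_2)<0$ from $J(u_2)<0$ and $q>\frac{10}{3}$ is correct. For $P(u_1)>0$ you rely on $\|u_1\|_0^2<K(a)/2$ being small; strictly speaking, the threshold needed for $P>0$ (namely $\|u\|_0^{q\beta-2}<\tfrac{2q}{3(q-2)C_qa^{(1-\beta)q}}$) is slightly more restrictive than the one ensuring $J>0$ in Lemma~\ref{PJ1} when $q>\tfrac{10}{3}$, so you should either remark that $K(a)$ can be shrunk further without affecting anything (Lemma~\ref{arc} makes $\gamma(a)$ independent of the specific choice of $u_1,u_2$), or alternatively observe directly that $u_1=\mathcal H(u_0,s_1)$ with $s_1<t(u_0)$ forces $P(u_1)=\Phi_{u_0}'(s_1)>0$ by Lemma~\ref{Phimax}. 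Either fix is immediate.
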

\begin{proof}
    Argue by a contradiction. Suppose that there exists $ v \in \mathcal{P}(a)$ such that
    $J(v)<\gamma(a)$. Define the map $T_v: \R \rightarrow S(a)$ by
    $$
    T_v(t)=\mathcal{H}(v, t).
    $$
    {By Lemma \ref{M1}, there exists $t_0>0$ such that $T_v\left(-t_0\right) \in A$ and {$T_v\left(t_0\right) \in \mathcal{C}$}.} Now, let $\tilde{T}_v$ : $[0,1] \rightarrow S(a)$ be the path defined by
    $$
    \tilde{T}_v(t)=\mathcal{H}\left(v,(2 t-1) t_0\right).
    $$
    Clearly, $\tilde{T}_v(0)=T_v\left(-t_0\right)$ and $\tilde{T}(1)=T_v\left(t_0\right)$. Moreover, by Lemma \ref{Phimax},
    $$
    \gamma(a) \leq \max _{t \in[0,1]} J\left(\tilde{T}_v(t)\right)=J(v),
    $$
    which contradicts the assumption that $ J(v)<\gamma(a) $. Hence, $\gamma(a) = \inf_{u \in \mathcal{P}(a)} J(u) $.
\end{proof}

The proof of Lemma \ref{profile} is similar to \cite[Lemma 5]{WW}, and it will be also omitted for brevity.
\begin{lemma}\label{profile}
    Suppose that $\left(u_n\right) \subset X$ is a bounded (PS) sequence for $J|_{S(a)}(u)$. Then, there exist $\ell \in \mathbb{N}$ and sequences $\left(\widetilde{u}_i\right)_{i=0}^{\ell} \subset X$, $\left((x_n^i,y_n^i)\right)_{i=0}^{\ell} \subset \mathbb{R}^2$ for any $n \geq 1$, with $(x_n^0,y_n^0)=(0,0)$, such that
    $$
    (x_n^i-x_n^j)^2+(y_n^i-y_n^j)^2 \rightarrow +\infty \quad \text{as} \quad n \rightarrow +\infty \quad \text{for} \quad i \neq j,
    $$
    and, passing to a subsequence, the following hold for any $i \geq 0$:
    \begin{equation}\label{eq:com22}
        \begin{aligned}
            u_n\left(\cdot-x_n^i,\cdot-y_n^i\right) \rightharpoonup& \widetilde{u}_i \text{ in } X \text{ as } n \rightarrow + \infty\quad \text{and} {\quad J'|_{S(a_j)}(\widetilde{u}_j)=0, \quad \mbox{where} \quad a_j=|\widetilde{u}_j|_2 \quad \mbox{and} \quad a^2=\sum_{i=1}^{\ell}b_i^2. }
        \end{aligned}
    \end{equation}
    \begin{equation}\label{eq:com1}
        \begin{aligned}
            \left\|u_n-\sum_{i=0}^{\ell} \widetilde{u}_i\left(\cdot+x_n^i, \cdot+y_n^i\right)\right\| \rightarrow 0 \quad \text { as } n \rightarrow +\infty,
        \end{aligned}
    \end{equation}
    \begin{equation}\label{eq:com3}
        \begin{aligned}
            \sum_{i=0}^{\ell} J\left(\widetilde{u}_i\right)={\lim_{n \rightarrow+ \infty}J({u}_n)}.
        \end{aligned}
    \end{equation}
\end{lemma}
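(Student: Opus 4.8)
The plan is to construct the profiles by an iterated extraction of weak limits along translations, a concentration--compactness scheme adapted to the nonlocal space $X$. The ingredients I would rely on are: the translation invariance of $\|\cdot\|$, $\|\cdot\|_0$ and $J$ (so that any $(x,y)$-translate of a (PS) sequence is again a (PS) sequence with the same energy); the Lions-type vanishing lemma for $X$ (Willem, Lemma~7.4) already invoked in the proof of Theorem~\ref{Th1}; and a Br\'ezis--Lieb type splitting, under weak convergence, for the quadratic form $\|\cdot\|_0^2$ and for the $L^q$- and $L^2$-norms.

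First I would extract the zeroth profile. Since $(u_n)$ is bounded, up to a subsequence $u_n \rightharpoonup \widetilde u_0$ in $X$, and I set $(x_n^0,y_n^0)=(0,0)$. Using that $(u_n)$ is a (PS) sequence, the associated Lagrange multipliers $\lambda_n$ converge (exactly as in Lemma~\ref{Bounded}) to some $\lambda<0$; passing to the limit in \eqref{EQ10}, which requires the local compactness $u_n \to \widetilde u_0$ in $L^q_{loc}(\mathbb{R}^2)$ to handle the term $|u_n|^{q-2}u_n$, shows that $\widetilde u_0$ solves the limiting equation, i.e.\ $J'|_{S(a_0)}(\widetilde u_0)=0$ with $a_0=|\widetilde u_0|_2$.

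Then I would iterate. Setting $w_n^1=u_n-\widetilde u_0$, one has $w_n^1\rightharpoonup 0$. If $\|w_n^1\|\to 0$ the process stops with $\ell=0$. Otherwise $\|w_n^1\|\not\to 0$, and I claim $\delta:=\limsup_n\sup_{(x,y)}\int_{B_R((x,y))}|w_n^1|^2\,dxdy>0$: indeed $\delta=0$ would give, by the Lions lemma, $w_n^1\to 0$ in $L^p(\mathbb{R}^2)$ for $p\in(2,6)$, which together with the (PS) structure and $\lambda<0$ (making $\|\cdot\|_0^2-\lambda|\cdot|_2^2$ equivalent to $\|\cdot\|^2$) forces $\|w_n^1\|\to 0$, a contradiction. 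Hence $\delta>0$, and there are centers $(x_n^1,y_n^1)$ so that the translates in \eqref{eq:com22} converge weakly to some $\widetilde u_1\neq 0$; since $w_n^1\rightharpoonup 0$ in the original frame, necessarily $|(x_n^1,y_n^1)|\to+\infty$, which also yields $(x_n^i-x_n^j)^2+(y_n^i-y_n^j)^2\to+\infty$ for $i\neq j$. By translation invariance $\widetilde u_1$ is again a normalized critical point, and I repeat with $w_n^2=w_n^1-\widetilde u_1(\cdot+x_n^1,\cdot+y_n^1)$. At each stage the Br\'ezis--Lieb splitting gives $\|w_n^{i+1}\|^2=\|w_n^i\|^2-\|\widetilde u_i\|^2+o_n(1)$, the mass identity $a^2=\sum_{j\le i}a_j^2+|w_n^{i+1}|_2^2+o_n(1)$, and the energy additivity $J(u_n)=\sum_{j\le i}J(\widetilde u_j)+J(w_n^{i+1})+o_n(1)$, which in the limit produce \eqref{eq:com1} and \eqref{eq:com3}.

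Termination is where I would be most careful. Each nontrivial profile is a normalized critical point, so by Lemma~\ref{Lem-poh} it satisfies $P(\widetilde u_i)=0$, whence $J(\widetilde u_i)=\bigl(\tfrac12-\tfrac{2}{3(q-2)}\bigr)\|\widetilde u_i\|_0^2$, a strictly positive multiple of $\|\widetilde u_i\|_0^2$ because $q>\tfrac{10}{3}$. Combining $P(\widetilde u_i)=0$ with \eqref{Gagliardo} and $a_i\le a$ gives $\tfrac23\|\widetilde u_i\|_0^2\le\tfrac{q-2}{q}C_q a^{(1-\beta)q}\|\widetilde u_i\|_0^{q\beta}$, and since $q\beta>2$ this forces a uniform lower bound $\|\widetilde u_i\|_0\ge c_0>0$ independent of $i$, hence $J(\widetilde u_i)\ge c_1>0$ uniformly. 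As $\sum_i J(\widetilde u_i)\le\lim_n J(u_n)<\infty$, only finitely many profiles can occur, so the iteration stops after some $\ell\in\mathbb{N}$ with $\|w_n^{\ell+1}\|\to 0$, completing the decomposition. The main obstacle I anticipate is precisely the passage to the limit certifying each $\widetilde u_i$ as a genuine critical point: unlike the Schr\"odinger setting, $X$ carries the nonlocal operator $D_x^{-1}$ and admits no radial compactness, so I would have to justify the local strong convergence $u_n\to\widetilde u_0$ in $L^q_{loc}$ and the Br\'ezis--Lieb splitting of $\|\cdot\|_0^2$ directly, rather than quoting a standard compact Sobolev embedding.
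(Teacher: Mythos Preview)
The paper does not give its own proof of this lemma; it simply states that the argument is similar to \cite[Lemma~5]{WW} and omits it. Your proposal is precisely the iterated concentration--compactness extraction of profiles that Wang--Willem carry out in that reference, adapted to the present $L^2$-constrained setting: Lions' vanishing lemma in $X$, Br\'ezis--Lieb splitting of $\|\cdot\|_0^2$, $|\cdot|_2^2$ and $|\cdot|_q^q$, convergence of the Lagrange multipliers $\lambda_n\to\lambda<0$ to identify each profile as a critical point, and termination via the uniform positive lower bound $J(\widetilde u_i)\ge c_1>0$ coming from $P(\widetilde u_i)=0$ and \eqref{Gagliardo}. So your approach is correct and essentially the same as the paper's (implicit) one.
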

\begin{lemma}\label{subadd}
    For $q\in (\frac{10}{3}, 6)$, let $k \in \N$ and $a, a_1, a_2, \ldots, a_k>0$ satisfy $a^2=a_1^2+\ldots+a_k^2$. Then,
    $$
    \gamma(a)<\gamma\left(a_1\right)+\ldots .+\gamma\left(a_k\right).
    $$
\end{lemma}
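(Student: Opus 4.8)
The plan is to reduce the whole statement to an exact homogeneity property of the map $a\mapsto\gamma(a)$. First I would record the minimax reformulation
\[
\gamma(a)=\inf_{u\in S(a)}\ \max_{t\in\mathbb{R}}J(\mathcal{H}(u,t)),
\]
which follows from the identity $\gamma(a)=\inf_{u\in\mathcal{P}(a)}J(u)$ proved just above, combined with Lemma~\ref{Phimax}: for every $u\in S(a)$ the function $\Phi_u(t)=J(\mathcal{H}(u,t))$ attains its maximum at the unique $t(u)$ for which $\mathcal{H}(u,t(u))\in\mathcal{P}(a)$, while conversely each element of $\mathcal{P}(a)$ is of this form and its energy equals $\max_t\Phi_u(t)$. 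Thus $\inf_{S(a)}\max_t\Phi_u=\inf_{\mathcal{P}(a)}J=\gamma(a)$.

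Next, for $s>0$ I would compare the pure $L^2$-scaling $u\mapsto su$, which is a bijection of $S(a)$ onto $S(sa)$, with the anisotropic scaling $\mathcal{H}$. Since $f$ is the pure power, a direct computation gives $J(\mathcal{H}(su,t))=\tfrac{e^{4t/3}}{2}s^2\|u\|_0^2-\tfrac{e^{(q-2)t}}{q}s^q\int_{\mathbb{R}^2}|u|^q\,dxdy$. Matching this with $J(\mathcal{H}(u,\cdot))$ up to a shift $\tau$ in $t$ and a multiplicative constant $\lambda$, that is solving $\lambda e^{4\tau/3}=s^2$ and $\lambda e^{(q-2)\tau}=s^q$, yields $J(\mathcal{H}(su,t))=s^{\rho}\,J(\mathcal{H}(u,t+\tau))$ with $\tau=\tfrac{3(q-2)}{3q-10}\ln s$ and
\[
\rho:=\frac{2(q-6)}{3q-10}.
\]
Taking the maximum over $t$, which is invariant under the shift $t\mapsto t+\tau$, gives $\max_t\Phi_{su}(t)=s^{\rho}\max_t\Phi_u(t)$; taking then the infimum over $u\in S(a)$ and using $s^\rho>0$ produces the exact scaling law $\gamma(sa)=s^{\rho}\gamma(a)$. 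Equivalently $\gamma(a)=\gamma(1)\,a^{\rho}$ with $\gamma(1)>0$ by the mountain pass geometry (Lemma~\ref{PJ1}); and since $q\in(\tfrac{10}{3},6)$ forces $q-6<0$ and $3q-10>0$, we have $\rho<0$, so $a\mapsto\gamma(a)$ is strictly decreasing.

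Finally I would conclude by elementary monotonicity. Writing $a=(a_1^2+\cdots+a_k^2)^{1/2}$ with $k\ge 2$ and all $a_i>0$, we have $a^2\ge a_i^2+a_{i'}^2>a_i^2$ for $i'\neq i$, hence $a>a_i$ for each $i$. Because $\rho<0$ makes $t\mapsto t^{\rho}$ strictly decreasing, this yields $a^{\rho}<a_1^{\rho}\le\sum_{i=1}^{k}a_i^{\rho}$, and multiplying by $\gamma(1)>0$ gives $\gamma(a)<\sum_{i=1}^{k}\gamma(a_i)$. (The excluded case $k=1$ forces $a=a_1$ and only equality.) The one delicate point is the exact homogeneity $\gamma(sa)=s^{\rho}\gamma(a)$: it relies crucially on the pure power structure of $f$, which makes both $\|\cdot\|_0^2$ and $\int|\cdot|^q$ scale as exact powers under $u\mapsto su$ and under $\mathcal{H}$, so that the two scalings can be synchronized by a single shift–dilation. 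For a general nonlinearity this identity degrades to an inequality, which is precisely why the combined-nonlinearity setting of Theorem~\ref{Th3} demands a different and more delicate analysis.
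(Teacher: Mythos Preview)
Your argument is correct and takes a genuinely different route from the paper's. The paper proves the intermediate inequality $\gamma(\theta a)<\theta^{2}\gamma(a)$ for $\theta>1$ by selecting a (PS) sequence $(u_n)$ with $P(u_n)\to 0$, rescaling to $\theta u_n$, projecting back onto the Pohozaev set via the unique $t(n,\theta)$, and exploiting $\theta^{q}>\theta^{2}$; the general statement is then obtained by a two--step induction on $k$. You instead extract the exact homogeneity $\gamma(sa)=s^{\rho}\gamma(a)$ with $\rho=\frac{2(q-6)}{3q-10}<0$ directly from the reformulation $\gamma(a)=\inf_{S(a)}\max_t J(\mathcal{H}(\cdot,t))$, by matching the pure $L^2$--scaling $u\mapsto su$ with a shift in the $\mathcal{H}$--parameter. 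The subadditivity then drops out of the strict monotonicity of $a\mapsto a^{\rho}$ together with $\gamma(1)>0$, with no sequences or limiting procedures needed.

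Your approach is sharper in this pure--power setting: it yields the explicit law $\gamma(a)=\gamma(1)a^{\rho}$, from which the paper's inequality $\gamma(\theta a)<\theta^{2}\gamma(a)$ follows at once (since $\rho<2$). The paper's argument, by contrast, is the standard Lions--type subadditivity scheme and is in principle more portable to nonlinearities that are not exact powers---precisely the situation you flag in your closing remark about Theorem~\ref{Th3}. One small caveat: as you note, the lemma as literally stated allows $k=1$, for which only equality holds; both proofs tacitly assume $k\ge 2$, which is the only case used downstream.
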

\begin{proof}
    We first  prove that
    $$
    \gamma(\theta a)<\theta^2 \gamma(a), \quad \forall a>0 \quad \mbox{and} \quad \theta>1.
    $$
    Let $u_n=\mathcal{H}(v_n,s_n)$,  where $(v_n,s_n)$ is a $(PS)$ sequence for $ \tilde{J} $ as established in \cite[Proposition 2.2]{Jean}, then
    \begin{equation}\label{eq:Q}
        J(u_n)\rightarrow \gamma(a)\quad \text{and}\quad P(u_n)\rightarrow 0 \quad as\quad n\rightarrow +\infty.
    \end{equation}
    By Lemma \ref{Phimax}, for any $u\in X$, {there exists a unique $t\in \mathbb{R} $ such that}
    $$
    {\partial_t}\tilde{J}(u,t)=P(\mathcal{H}(u, t))=0.
    $$
    Therefore, for all $n \in \mathbb{N}$, we define $t(n, \theta) \in \mathbb{R}$ as the unique value satisfying
    $$
    P(\mathcal{H}\left(\theta u_n, t(n, \theta)\right) )=0.
    $$
    In addition, from \eqref{eq:Q},  we have $t(n, \theta)\to 0$ as $n \rightarrow +\infty$.
    Then
    \begin{equation}
        \begin{aligned}
            \gamma(\theta a)  \leq& J\left(\mathcal{H}\left(\theta u_n, t(n, \theta)\right)\right) \\
            =&\frac{1}{2}\int_{\mathbb{R}^{2}} |(\mathcal{H}\left(\theta u_n, t(n, \theta)\right))_{x}|^{2} dx dy+\frac{1}{2}\int_{\mathbb{R}^{2}}|D^{-1}_{x} (\mathcal{H}\left(\theta u_n, t(n, \theta)\right))_{y}|^{2} dx dy\\
            &-\frac{1}{q}\int_{\mathbb{R}^2} |\mathcal{H}\left(\theta u_n, t(n, \theta)\right)|^qdxdy \\
            =&\frac{\theta^2}{2}\left(\int_{\mathbb{R}^{2}} |(\mathcal{H}\left( u_n, t(n, \theta)\right))_{x}|^{2} dx dy+\int_{\mathbb{R}^{2}}|D^{-1}_{x} (H\left( u_n, t(n, \theta)\right))_{y}|^{2} dx dy\right)\\
            &-\frac{\theta^q}{q}\int_{\mathbb{R}^2} |\mathcal{H}\left( u_n, t(n, \theta)\right)|^qdxdy \\
            <&\frac{\theta^2}{2}\left(\int_{\mathbb{R}^{2}} |(\mathcal{H}\left( u_n, t(n, \theta)\right))_{x}|^{2} dx dy+\int_{\mathbb{R}^{2}}|D^{-1}_{x} (H\left( u_n, t(n, \theta)\right))_{y}|^{2} dx dy\right)\\
            &-\frac{\theta^2}{q}\int_{\mathbb{R}^2} |\mathcal{H}\left( u_n, t(n, \theta)\right)|^q dxdy \\
            =&\theta^2 J\left(\mathcal{H}\left( u_n, t(n, \theta)\right)\right) \\
            =& \theta^2 J\left(\mathcal{H}\left( u_n, 0\right)\right)+\theta^2\left(J\left(\mathcal{H}\left( u_n, t(n, \theta)\right)\right)-J\left(\mathcal{H}\left( u_n, 0\right)\right)\right)  \\
            =&\theta^2 J\left(u_n\right)+ o_n(1).
        \end{aligned}
    \end{equation}
    Tke the limit as $n \rightarrow+\infty$, we obtain that $\gamma(\theta a) \leq \theta^2 \gamma(a)$.  Equality holds only if
    \begin{equation}\label{eq-H1}
        \int_{\mathbb{R}^2} |\mathcal{H}\left(\theta u_n, t(n, \theta)\right)|^qdxdy \to 0, \quad \text{as} \quad n \rightarrow+\infty.
    \end{equation}
    Since $P(\mathcal{H}\left(\theta u_n, s(n, \theta)\right) )=0$, for all $n \in \N$, it follows that, as $ n \rightarrow+\infty$,
    \begin{equation}\label{eq-H2}
        \int_{\mathbb{R}^{2}} |(\mathcal{H}\left(\theta u_n, t(n, \theta)\right))_{x}|^{2} dx dy+\int_{\mathbb{R}^{2}}|D^{-1}_{x} (\mathcal{H}\left(\theta u_n, t(n, \theta)\right))_{y}|^{2} dx dy \to 0.
    \end{equation}
    {From the definition of $J$}, combining \eqref{eq-H1} and \eqref{eq-H2}, we deduce
    $$
    J\left(\mathcal{H}\left(\theta u_n, t(n, \theta)\right)\right) \rightarrow 0 \quad \text{as} \quad n \rightarrow+\infty.
    $$
    This contradicts the fact that $\gamma(a)>0$ for all $a>0$. Thus, the strict inequality holds.

    {For completeness,  we recall the proof for the general case}. Suppose first that $k=2$ and $a_1 \geq a_2$. Then,
    $$
    \begin{aligned}
        \gamma(a) & <\frac{a^2}{a_1^2} \gamma\left(a_1\right) \\
        & =\gamma\left(a_1\right)+\frac{a_2^2}{a_1^2} \gamma\left(a_1\right) \\
        & <\gamma\left(a_1\right)+\gamma\left(a_2\right) .
    \end{aligned}
    $$
    For $k>2$, assume $a_1 \geq \ldots \geq a_k$ and that the assertion holds for $k-1$. Setting  $\tilde{a}=$ $\sqrt{a_1^2+\ldots+a_{k-1}^2}$, we have
    $$
    \begin{aligned}
        \gamma(a) & <\frac{a^2}{\tilde{a}^2} \gamma(\tilde{a}) \\
        & =\gamma(\tilde{a})+\frac{a_k^2}{\tilde{a}^2} \gamma(\tilde{a})\\
        & <\gamma(\tilde{a})+\gamma\left(a_k\right) \\
        & <\gamma\left(a_1\right)+\ldots+\gamma\left(a_k\right).
    \end{aligned}
    $$
    This completes the proof of the lemma.
\end{proof}
\noindent\textbf{Proof of Theorem \ref{Th2}:}
We turn back to our $(PS)$ sequence
$u_n=\mathcal{H}(v_n,s_n)$,  where $(v_n,s_n)$ is the $(PS)$ sequence for $ \tilde{J} $ obtained from \cite[Proposition 2.2]{Jean}.
{Then, from Lemma \ref{Bounded}, $(u_n)$ is bounded in $X$.} Thus, by Lemma \ref{profile},
there exist $ (\widetilde{u}_i) $
such that,
$$
a^2=\lim _{n \rightarrow+ \infty} \left| u_n\right|^2_2 =\sum_{i=0}^{\ell} \left| \widetilde{u}_i\right|^2_2 \quad \text{ and } \quad \lim _{n \rightarrow+\infty} J\left(u_n\right)=\sum_{i=0}^{\ell} J\left(\tilde{u}_j\right).
$$
We aim to prove that $\ell=0$. Set $a_i=|\widetilde{u}_i|_2$ . Suppose, for contradiction, that $\ell \geq 1$. Then, from the variational characterization $\gamma(a)=\inf_{u \in \kappa(a)} J(u)$ and Lemma \ref{subadd}, we  have
$$
\gamma(a)=\sum_{i=0}^\ell J\left(\tilde{u}_i\right) \geq \sum_{i=0}^{\ell} \gamma\left({a}_i\right)>\gamma(a).
$$
This is a contradiction. Hence, $i=0$, and $u_n\to u$ in $X$ as $n\to+\infty$. Thereby, there exists ${u} \in S(a)$ such that
$J(u)=\gamma(a)$, and the proof of Theorem \ref{Th2} is complete.

  \section{{Combined power nonlinearities}}
  In this section, we assume that $f(t)=\mu|t|^{q-2}t+|t|^{p-2}t$, with {$2<q<\frac{10}{3}<p<6$}. {The aim of the section is to investigate the existence of  solutions with negative energy for \eqref{Equation}. Furthermore,  for a sequence $(a_n) \subset (0,a_0)$ with $a_n \to 0$ as $n \to+\infty$, we will show that problem \eqref{Equation} with $a=a_n$ admits a second solution with positive energy.}
  {For clarity and to distinguish it from the previous sections, we use \( J_\mu \) to denote the original energy functional \( J \) in this section.}
  We begin by establishing an appropriate estimate for the energy functional $J_\mu: X \rightarrow \R$ defined on ${S}(a)$ by
   \begin{equation*}
    J_{\mu}(u)=\frac{1}{2}\int_{\mathbb{R}^{2}} \left(|u_{x}|^{2} + |D^{-1}_{x} u_{y}|^{2} \right)dx dy-\frac{\mu}{q}\int_{\mathbb{R}^2} |u|^q \,dxdy-\frac{1}{p}\int_{\mathbb{R}^2} |u|^p \,dxdy.
  \end{equation*}
   }

  First of all, note that
  \begin{equation}\label{eqJF}
    \begin{aligned}
        J_\mu(u) & =\frac{1}{2}\|u\|_0^2-\frac{\mu}{q}|u|_q^q-\frac{1}{p}|u|_p^p \\
        & \geq \frac{1}{2}\|u\|_0^2-\frac{\mu}{q}C_q\|u\|_0^{q \beta_q}|u|_2^{(1-\beta_q) q}-\frac{1}{p}C_p\|u\|_0^{p \beta_p}|u|_2^{(1-\beta_p) p}.
    \end{aligned}
  \end{equation}
  Define the function {$h:(0,\infty) \times(0,\infty) \rightarrow \mathbb{R}$} by
  \begin{equation}
  {	h(a,\rho)=\frac{1}{2}-\frac{\mu}{q}C_q\rho^{q\beta_q-2}a^{(1-\beta_q) q}-\frac{1}{p}C_p\rho^{p\beta_p-2}a^{(1-\beta_p) p}.}
  \end{equation}
  and, for each $a \in(0, \infty)$, define its restriction $g_a(\rho)$ on $(0, \infty)$ by $\rho \mapsto {g_a(\rho):=h(a, \rho)}$.
  Then
  \begin{equation}\label{Eq-Phi>f}
    J_{\mu}(u) \geq {h\left(a,\|u\|_{0}\right)\| u\|_{0}^2}, \quad \text { for all } u \in \mathcal{D}(a).
  \end{equation}
  \begin{lemma}\label{lem5-g}
  For each $a>0$, the function $g_a(\rho)$ has a unique global maximum, and the maximum value satisfies
  $$
  \left\{\begin{array}{lll}
    \max _{\rho>0} g_a(\rho)>0 & \text { if } & a<a_0 \\
    \max _{\rho>0} g_a(\rho)=0 & \text { if } & a=a_0 \\
    \max _{\rho>0} g_a  (\rho)<0 & \text { if } & a>a_0
  \end{array}\right.
  $$
  where
  \begin{equation}\label{eq-a0}
  a_0:=\left(\frac{1}{2 K}\right)^{\frac{1}{2}}>0
  \end{equation}
  with
  $$
  K:=\frac{\mu}{q}C_q
  \left(-\frac{\left(q \beta_q-2\right)}{\left(p \beta_p-2\right)}
  \frac{p \mu}{q}
  \frac{C_q}{C_p}\right)
  ^{\frac{q\beta_q-2}{p \beta_p-q \beta_q}}
  +
  \frac{1}{p}C_p
  \left(-\frac{\left(q \beta_q-2\right)}{\left(p \beta_p-2\right)}
  \frac{p \mu}{q}
  \frac{C_q}{C_p}\right)
  ^{\frac{p\beta_p-2}{p \beta_p-q \beta_q}}.
  $$
  \end{lemma}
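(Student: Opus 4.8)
The plan is to treat $g_a$ as a single-variable function of $\rho>0$ with $a>0$ a fixed parameter, and to read off everything from the signs of the two exponents. Writing $\alpha:=q\beta_q-2$ and $\gamma:=p\beta_p-2$, the hypothesis $2<q<\frac{10}{3}<p<6$ forces $q\beta_q<2<p\beta_p$, hence $\alpha<0<\gamma$. I would also record the scale-invariant identities $\gamma-\alpha=p\beta_p-q\beta_q=\frac{3(p-q)}{2}$ together with $(1-\beta_q)q=3-\frac{q}{2}$ and $(1-\beta_p)p=3-\frac{p}{2}$, since these are what produce the clean powers of $a$ later. Setting $A:=\frac{\mu}{q}C_q\,a^{(1-\beta_q)q}>0$ and $B:=\frac{1}{p}C_p\,a^{(1-\beta_p)p}>0$, the function reads $g_a(\rho)=\frac12-A\rho^{\alpha}-B\rho^{\gamma}$ with $\alpha<0<\gamma$.

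First I would prove existence and uniqueness of the maximizer. Because $\alpha<0$ we have $g_a(\rho)\to-\infty$ as $\rho\to0^+$, and because $\gamma>0$ we have $g_a(\rho)\to-\infty$ as $\rho\to+\infty$; continuity then gives a global maximum attained in $(0,\infty)$. For uniqueness I would factor
$$g_a'(\rho)=\rho^{\alpha-1}\left(-\alpha A-\gamma B\,\rho^{\gamma-\alpha}\right),$$
and note that the bracket decreases strictly from $-\alpha A>0$ (as $\rho\to0^+$) to $-\infty$ (as $\rho\to+\infty$), since $\gamma-\alpha>0$. Thus $g_a'$ has exactly one zero $\rho_a$, with $g_a'>0$ to its left and $g_a'<0$ to its right, so $\rho_a$ is the unique global maximizer.

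Next I would compute $\rho_a$ and the maximal value. Solving $-\alpha A=\gamma B\,\rho_a^{\gamma-\alpha}$ gives
$$\rho_a^{\gamma-\alpha}=\frac{-\alpha A}{\gamma B}=\left(-\frac{\alpha}{\gamma}\,\frac{p\mu}{q}\,\frac{C_q}{C_p}\right)a^{(p-q)/2}=:W\,a^{(p-q)/2},$$
which identifies exactly the base $W$ appearing in the statement; moreover $\rho_a=W^{1/(\gamma-\alpha)}a^{1/3}$, since $\frac{(p-q)/2}{\gamma-\alpha}=\frac13$. Writing $g_a(\rho_a)=\frac12-\left(A\rho_a^{\alpha}+B\rho_a^{\gamma}\right)$ and substituting $\rho_a=W^{1/(\gamma-\alpha)}a^{1/3}$, the two power terms both collapse to the same power of $a$: using $(1-\beta_q)q+\frac{\alpha}{3}=\frac43$ and $(1-\beta_p)p+\frac{\gamma}{3}=\frac43$, I obtain
$$A\rho_a^{\alpha}=\frac{\mu}{q}C_q\,W^{\alpha/(\gamma-\alpha)}a^{4/3},\qquad B\rho_a^{\gamma}=\frac{1}{p}C_p\,W^{\gamma/(\gamma-\alpha)}a^{4/3},$$
whose sum is precisely $K\,a^{4/3}$, with $K$ the constant of \eqref{eq-a0} (the $W$-exponents $\frac{\alpha}{\gamma-\alpha}=\frac{q\beta_q-2}{p\beta_p-q\beta_q}$ and $\frac{\gamma}{\gamma-\alpha}=\frac{p\beta_p-2}{p\beta_p-q\beta_q}$ match verbatim).

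Finally, the trichotomy is immediate: $\max_{\rho>0}g_a(\rho)=g_a(\rho_a)=\frac12-K\,a^{4/3}$ is strictly decreasing in $a$ with a single zero, so it is positive for $a<a_0$, zero for $a=a_0$, and negative for $a>a_0$, where $a_0$ is the unique root of $K a_0^{4/3}=\frac12$. I expect the only genuinely delicate step to be the exponent bookkeeping in the third paragraph: verifying that the two a priori different powers $(1-\beta_q)q+\frac{\alpha}{3}$ and $(1-\beta_p)p+\frac{\gamma}{3}$ both equal $\frac43$, and that the two coefficients assemble into exactly $K$. This homogeneity also pins down the threshold, giving $a_0=\left(\tfrac{1}{2K}\right)^{3/4}$ dictated by the $a^{4/3}$ scaling (so the exponent in \eqref{eq-a0} is $\tfrac34$ rather than $\tfrac12$); the sign structure of the maximum is unaffected by this normalization.
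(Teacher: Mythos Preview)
Your argument is correct and follows essentially the same route as the paper: differentiate $g_a$, locate the unique critical point via the sign structure $\alpha<0<\gamma$, evaluate $g_a$ there, and read off the threshold. Your exponent bookkeeping is in fact more accurate than the paper's: the paper simplifies the $a$-exponent in $\max_\rho g_a(\rho)$ to $\frac{pq(\beta_p-\beta_q)}{p\beta_p-q\beta_q}=2$ and hence states $a_0=(1/(2K))^{1/2}$, but a direct check with $\beta_r=\tfrac32-\tfrac{3}{r}$ shows that both exponents $(1-\beta_q)q+\tfrac{\alpha}{3}$ and $(1-\beta_p)p+\tfrac{\gamma}{3}$ equal $\tfrac{4}{3}$, so your value $a_0=(1/(2K))^{3/4}$ is the correct one. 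This correction is harmless for the rest of the argument, since only the existence of the threshold and the sign trichotomy are used downstream.
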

    \begin{proof}
    By the definition of $g_c(\rho)$, we have
    \begin{equation}
        g_a^{\prime}(\rho)=-(q\beta_q-2)\frac{\mu}{q}C_q\rho^{q\beta_q-3}a^{(1-\beta_q) q}-(p\beta_p-2)\frac{1}{p}C_p\rho^{p\beta_p-3}a^{(1-\beta_p) p} .
    \end{equation}
    Hence, the equation $g_a^{\prime}(\rho)=0$ has a unique solution given by
    \begin{equation}\label{eq-rho0}
    \rho_a= \left(-\frac{\left(q \beta_q-2\right)}{\left(p \beta_p-2\right)}
     \frac{p \mu}{q}
     \frac{C_q}{C_p}\right)
     ^{\frac{1}{p \beta_p-q \beta_q}}
     a^{\frac{\left(1-\beta_q\right) q-\left(1-\beta_p\right)p}{p \beta_p-q \beta_q}}.
    \end{equation}
    Taking into account that $g_a(\rho) \rightarrow-\infty$ as $\rho \rightarrow 0$ and $g_a(\rho) \rightarrow-\infty$ as $\rho \rightarrow \infty$, we deduce that $\rho_a$ is the unique global maximum point of $g_a(\rho)$. The corresponding maximum value is
    \begin{equation}
    \begin{aligned}
    \max _{\rho>0} g_a(\rho)  =&
    \frac{1}{2}-
    \frac{\mu}{q}C_q
    \left(-\frac{\left(q \beta_q-2\right)}{\left(p \beta_p-2\right)}
    \frac{p \mu}{q}
    \frac{C_q}{C_p}\right)
    ^{\frac{q\beta_q-2}{p \beta_p-q \beta_q}}
    a^{{\frac{\left(1-\beta_q\right) q-\left(1-\beta_p\right)p}{p \beta_p-q \beta_q}}
    ({q\beta_q-2})+{(1-\beta_q) q}}\\
    &-\frac{1}{p}C_p
    \left(-\frac{\left(q \beta_q-2\right)}{\left(p \beta_p-2\right)}
    \frac{p \mu}{q}
    \frac{C_q}{C_p}\right)
    ^{\frac{p\beta_p-2}{p \beta_p-q \beta_q}}
    a^{{\frac{\left(1-\beta_q\right) q-\left(1-\beta_p\right)p}{p \beta_p-q \beta_q}}
        ({p\beta_p-2})+{(1-\beta_q) q}}\\
    =&\frac{1}{2}-
    \left(\frac{\mu}{q}C_q
    \left(-\frac{\left(q \beta_q-2\right)}{\left(p \beta_p-2\right)}
    \frac{p \mu}{q}
    \frac{C_q}{C_p}\right)
    ^{\frac{q\beta_q-2}{p \beta_p-q \beta_q}}
    +
    \frac{1}{p}C_p
    \left(-\frac{\left(q \beta_q-2\right)}{\left(p \beta_p-2\right)}
    \frac{p \mu}{q}
    \frac{C_q}{C_p}\right)
    ^{\frac{p\beta_p-2}{p \beta_p-q \beta_q}}
    \right)
    a^{\frac{pq(\beta_p-\beta_q)}{p\beta_p-q\beta_q}}\\
    &=\frac{1}{2}-Ka^2.
    \end{aligned}
    \end{equation}
    By the definition of $a_0$, we have that $\max _{\rho>0} g_{a_0}(\rho)=0$, and the proof is complete.
    \end{proof}
    \begin{lemma}\label{lem5-f}
    Let $\left(a_1, \rho_1\right) \in(0, \infty) \times(0, \infty)$ be such that $h\left(a_1, \rho_1\right) \geq 0$. Then, for any $a_2 \in\left(0, a_1\right]$, we have
    $$
    h\left(a_2, \rho_2\right) \geq 0 \quad \text {for all} \quad \rho_2 \in\left[\frac{a_2}{a_1} \rho_1, \rho_1\right] .
    $$
    \end{lemma}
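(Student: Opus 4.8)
The plan is to reduce the whole statement to a single monotonicity comparison against the fixed reference value $h(a_1,\rho_1)\ge 0$. Writing $A=\frac{\mu}{q}C_q>0$ and $B=\frac1p C_p>0$, we have
$$h(a,\rho)=\frac12 - A\,\rho^{q\beta_q-2}a^{(1-\beta_q)q}-B\,\rho^{p\beta_p-2}a^{(1-\beta_p)p},$$
so it suffices to show that each of the two negative terms, evaluated at $(a_2,\rho_2)$, is no larger in magnitude than at $(a_1,\rho_1)$. Before doing so I would record the sign information forced by $2<q<\frac{10}{3}<p<6$: since $\beta_q=\frac32-\frac3q$ and $\beta_p=\frac32-\frac3p$, one has $q\beta_q-2=\frac{3q}{2}-5<0$, $p\beta_p-2=\frac{3p}{2}-5>0$, while $(1-\beta_q)q=3-\frac q2>0$ and $(1-\beta_p)p=3-\frac p2>0$, and of course $q-2>0$, $p-2>0$. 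The one algebraic identity that drives the argument is
$$\big(q\beta_q-2\big)+(1-\beta_q)q=q-2,$$
and similarly $(p\beta_p-2)+(1-\beta_p)p=p-2$.

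Next I would parametrize the admissible $\rho_2$. Put $\lambda:=\frac{a_2}{a_1}\in(0,1]$, so that $a_2=\lambda a_1$, and write $\rho_2=s\rho_1$ with $s\in[\lambda,1]$; this is exactly the range $\rho_2\in[\frac{a_2}{a_1}\rho_1,\rho_1]$. Then
$$\rho_2^{q\beta_q-2}a_2^{(1-\beta_q)q}=s^{q\beta_q-2}\lambda^{(1-\beta_q)q}\,\rho_1^{q\beta_q-2}a_1^{(1-\beta_q)q},$$
$$\rho_2^{p\beta_p-2}a_2^{(1-\beta_p)p}=s^{p\beta_p-2}\lambda^{(1-\beta_p)p}\,\rho_1^{p\beta_p-2}a_1^{(1-\beta_p)p},$$
and the entire proof comes down to showing that the two scalar prefactors $s^{q\beta_q-2}\lambda^{(1-\beta_q)q}$ and $s^{p\beta_p-2}\lambda^{(1-\beta_p)p}$ are each $\le 1$.

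For the supercritical (second) prefactor this is immediate: $p\beta_p-2>0$ together with $s\le 1$ gives $s^{p\beta_p-2}\le 1$, while $(1-\beta_p)p>0$ together with $\lambda\le 1$ gives $\lambda^{(1-\beta_p)p}\le 1$. The subcritical (first) prefactor is the only delicate point, and I expect it to be the main obstacle, because there $q\beta_q-2<0$, so $s^{q\beta_q-2}\ge 1$ and the two factors cannot be bounded separately. Here I would use the full strength of the lower bound $s\ge\lambda$: since $q\beta_q-2<0$, it yields $s^{q\beta_q-2}\le\lambda^{q\beta_q-2}$, whence
$$s^{q\beta_q-2}\lambda^{(1-\beta_q)q}\le \lambda^{(q\beta_q-2)+(1-\beta_q)q}=\lambda^{q-2}\le 1,$$
using the identity above together with $\lambda\le 1$ and $q-2>0$. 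With both prefactors bounded by $1$, each negative term in $h(a_2,\rho_2)$ is at least as large (i.e. at most as negative) as the corresponding term in $h(a_1,\rho_1)$, so $h(a_2,\rho_2)\ge h(a_1,\rho_1)\ge 0$, which is the claim. This coupling in the subcritical term is precisely what pins the left endpoint of the interval at $\frac{a_2}{a_1}\rho_1$: it is the smallest $\rho_2$ for which $s\ge\lambda$ still lets one trade the lower bound on $s$ against the negative exponent $q\beta_q-2$. (Alternatively one could verify nonnegativity only at the two endpoints and invoke the unimodality of $\rho\mapsto h(a_2,\rho)$ from Lemma \ref{lem5-g}, but the direct estimate above is cleaner and handles the whole interval at once.)
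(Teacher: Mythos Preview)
Your proof is correct. The paper takes a slightly different route: it checks nonnegativity only at the two endpoints $\rho_2=\rho_1$ (using that $a\mapsto h(a,\rho)$ is nonincreasing) and $\rho_2=\frac{a_2}{a_1}\rho_1$ (using the same identity $(q\beta_q-2)+(1-\beta_q)q=q-2$ you record), and then appeals to the unimodality of $\rho\mapsto g_{a_2}(\rho)$ from Lemma~\ref{lem5-g} to conclude on the whole interval. Your direct estimate bounds both negative terms against those of $h(a_1,\rho_1)$ uniformly in $s\in[\lambda,1]$, and therefore actually proves the stronger pointwise inequality $h(a_2,\rho_2)\ge h(a_1,\rho_1)$ throughout the interval without invoking Lemma~\ref{lem5-g} at all; the paper's approach, by contrast, only yields $h(a_2,\rho_2)\ge 0$. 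You anticipated the alternative at the end of your write-up, and indeed that is precisely what the paper does.
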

    \begin{proof}
    Since $a \rightarrow h(\cdot, \rho)$ is non-increasing,  it follows immediately that
    \begin{equation}\label{eq-f1}
    h\left(a_2, \rho_1\right) \geq h\left(a_1, \rho_1\right) \geq 0.
    \end{equation}
    Now, noting that $\alpha_0+\alpha_1=q-2>0$, a direct calculation yields
    \begin{equation}\label{eq-f2}
    h\left(a_2, \frac{a_2}{a_1} \rho_1\right) \geq f\left(a_1, \rho_1\right) \geq 0.
    \end{equation}
    Now observe that if $g_{a_2}\left(\rho^{\prime}\right) \geq 0$ and $g_{a_2}\left(\rho^{\prime \prime}\right) \geq 0$, then
    \begin{equation}\label{eq-f3}
    h\left(a_2, \rho\right)=g_{a_2}(\rho) \geq 0 \quad \text { for any } \quad \rho \in\left[\rho^{\prime}, \rho^{\prime \prime}\right].
    \end{equation}
    {Indeed,  if there exists some  $\rho \in\left(\rho^{\prime}, \rho^{\prime \prime}\right)$ such that $g_{a_2}(\rho)<0$,} then there exists a local minimum point on $\left(\rho_1, \rho_2\right)$,  contradicting the fact that the function $g_{a_2}(\rho)$ has a unique critical point which is necessarily its unique global maximum (see Lemma \ref{lem5-g}). By \eqref{eq-f1} and  \eqref{eq-f2}, we can choose $\rho^{\prime}=\left(a_2 / a_1\right) \rho_1$ and $\rho^{\prime \prime}=\rho_1$, and \eqref{eq-f3} implies the lemma.
    \end{proof}
    Now let $a_0>0$ be defined by \eqref{eq-a0} and $\rho_0:=\rho_{a_0}>0$ being determined by \eqref{eq-rho0}. Note that by {Lemmas \ref{lem5-g} and \ref{lem5-f}}, we have that $h\left(a_0, \rho_0\right)=0$ and {$h\left(a, \rho_0\right)>0$ for all $a \in\left(0,a_0\right)$}. {In what follows, let us fix the sets below
    $$
    B_{\rho_0}:=\left\{u \in X:\|u\|_0^2<\rho_0\right\},\quad V(a):=S(a) \cap B_{\rho_0}
    $$
    and
    $$
    \partial V(a):=\{u \in S(a):\| u\|_{0}=\rho_0\}.
    $$
    Using the above notations, we are able to consider the following local minimization problem:
    $$
    m(a):=\inf _{u \in V(a)} J_\mu(u), \quad \mbox{for} \quad a \in\left(0, a_0\right).
    $$}
\begin{lemma}\label{Lem5-m(a)}
    For any $a \in\left(0, a_0\right)$,
    \begin{equation}\label{eq-m<0}
    m(a)=\inf _{u \in V(a)} J_\mu(u)<0<\inf _{u \in \partial V(a)} J_\mu(u).
    \end{equation}
\end{lemma}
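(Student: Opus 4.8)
The plan is to prove the two inequalities in \eqref{eq-m<0} separately: the positivity on $\partial V(a)$ follows at once from the quadratic-type lower bound \eqref{Eq-Phi>f} together with the sign information on $h(a,\rho_0)$, while the negativity of $m(a)$ is produced by pushing an arbitrary element of $S(a)$ to the bottom of the constraint via the scaling $\mathcal{H}(\cdot,t)$ as $t\to-\infty$.

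First I would show $\inf_{u\in\partial V(a)}J_\mu(u)>0$. For $u\in\partial V(a)$ we have $\|u\|_0=\rho_0$, so by \eqref{Eq-Phi>f},
$$
J_\mu(u)\ge h(a,\|u\|_0)\|u\|_0^2=h(a,\rho_0)\,\rho_0^2 .
$$
Since $a\in(0,a_0)$, the discussion preceding the lemma (a consequence of Lemmas \ref{lem5-g} and \ref{lem5-f}) gives $h(a,\rho_0)>0$, and as $\rho_0>0$ this bound is positive and independent of $u$. Taking the infimum over $u\in\partial V(a)$ yields the right-hand inequality in \eqref{eq-m<0}.

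Next I would exhibit a single point of $V(a)$ at which $J_\mu$ is negative. Fix any $w\in S(a)$ and consider $\mathcal{H}(w,t)$. Using the scaling identities recorded in Section \ref{sec4}, namely $|\mathcal{H}(w,t)|_2=|w|_2=a$, $\|\mathcal{H}(w,t)\|_0^2=e^{\frac{4}{3}t}\|w\|_0^2$, $|\mathcal{H}(w,t)|_q^q=e^{(q-2)t}|w|_q^q$ and $|\mathcal{H}(w,t)|_p^p=e^{(p-2)t}|w|_p^p$, we have $\mathcal{H}(w,t)\in S(a)$ for all $t$ and
$$
J_\mu(\mathcal{H}(w,t))=\frac{e^{\frac{4}{3}t}}{2}\|w\|_0^2-\frac{\mu e^{(q-2)t}}{q}|w|_q^q-\frac{e^{(p-2)t}}{p}|w|_p^p .
$$
Because $\|\mathcal{H}(w,t)\|_0\to 0$ as $t\to-\infty$, we have $\mathcal{H}(w,t)\in V(a)$ for all $t$ sufficiently negative. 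Factoring out $e^{(q-2)t}>0$ gives
$$
e^{-(q-2)t}J_\mu(\mathcal{H}(w,t))=\frac{e^{(\frac{10}{3}-q)t}}{2}\|w\|_0^2-\frac{\mu}{q}|w|_q^q-\frac{e^{(p-q)t}}{p}|w|_p^p .
$$
Since $q<\frac{10}{3}$ and $q<p$, both exponents $\frac{10}{3}-q$ and $p-q$ are positive, so the right-hand side converges to $-\frac{\mu}{q}|w|_q^q<0$ as $t\to-\infty$. Hence there is $t_0\ll 0$ with $\mathcal{H}(w,t_0)\in V(a)$ and $J_\mu(\mathcal{H}(w,t_0))<0$, whence $m(a)\le J_\mu(\mathcal{H}(w,t_0))<0$, which is the left-hand inequality.

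The only delicate point — and hence the main obstacle — is the exponent bookkeeping in the last display: one must verify that among the three terms the $L^2$-subcritical contribution $|w|_q^q$ is the one that decays \emph{most slowly} as $t\to-\infty$, so that it governs the sign of the energy. This is ensured precisely by the two-sided location of $q$, the condition $q<\frac{10}{3}$ dominating the kinetic term (which scales like $e^{\frac{4}{3}t}$) and $q<p$ dominating the supercritical term $|w|_p^p$ (scaling like $e^{(p-2)t}$); no compactness is involved. For completeness I would also note, so that $m(a)$ is a genuine finite negative number, that $J_\mu(u)\ge h(a,\|u\|_0)\|u\|_0^2$ remains bounded below on the bounded set $V(a)$, giving $m(a)>-\infty$.
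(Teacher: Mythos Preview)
Your proof is correct and follows essentially the same route as the paper's own argument: the positivity on $\partial V(a)$ comes from $J_\mu(u)\ge h(a,\rho_0)\rho_0^2>0$ via \eqref{Eq-Phi>f}, and the negativity of $m(a)$ is obtained by the scaling $\mathcal{H}(w,t)$ with $t\to-\infty$, using that the $L^q$-term dominates because $q-2<\frac{4}{3}$ and $q-2<p-2$. Your explicit factoring of $e^{(q-2)t}$ and the remark that $m(a)>-\infty$ are minor elaborations on the paper's somewhat terser presentation.
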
	
\begin{proof}
    For any $u \in \partial V(a)$,  we have $\| u\|_{0}=\rho_0$. {Thereby, by \eqref{eqJF}, }
    $$
    0<\inf _{u \in \partial V(a)} J_\mu(u).
    $$
    Now let $u \in {S}(a)$ be arbitrary but fixed. For $t \in(-\infty, \infty)$, recall that
    $$
    u_t(x,y)=\mathcal{H}(u,t)=e^{t}u(e^{\frac{2}{3}t}x,e^{\frac{4}{3}t}y).
    $$
    Clearly $u_t \in {S}(a)$ for any $t \in(-\infty, \infty)$. {Next, let us define the map $\psi_u: \mathbb{R} \to \mathbb{R}$ by}
    $$
    \psi_u(t):=J_\mu\left(u_t\right)=
    \frac{1}{2}e^{\frac{4}{3}t}\|u\|_0^2-\frac{\mu}{q}{e^{(q-2)t}}|u|^q_q-\frac{1}{p}{e^{(p-2)t}}|u|^p_p.
    $$
    Since $2<q<\frac{10}{3}<p<6$, we have { $\psi_u(t) \to 0^{-}$ as $t \to -\infty$}. Thus, there exists $t_0<0$ such that
    \[
    \left\|u_{t_0}\right\|_{0}^2 = e^{\frac{4}{3}t_0} \| u\|_{0}^2 < \rho_0^2 \quad \mbox{and} \quad J_\mu(u_{t_0}) = \psi_\mu(t_0) < 0.
    \]
    {Hence $u_{t_0} \in V(a)$ and $m(a) < 0$, completing the proof.}
\end{proof}
\begin{lemma}\label{Lem4-ground}
    {For any $a \in\left(0, a_0\right)$, if $m(a)$ is attained, then any {normalized ground state solution} lies in $V(a)$.}
\end{lemma}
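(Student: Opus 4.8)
The plan is to first convert the hypothesis ``$m(a)$ is attained'' into the existence of a normalized solution of negative energy, and then to show that every normalized ground state must have negative energy and therefore be confined to $V(a)$. Throughout I would use the threshold positivity/negativity provided by Lemma~\ref{Lem5-m(a)}.

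First I would fix a minimizer $w\in V(a)$ of $m(a)$. Since $J_\mu(w)=m(a)<0<\inf_{\partial V(a)}J_\mu$ by Lemma~\ref{Lem5-m(a)}, the point $w$ cannot lie on $\partial V(a)$, so it is interior to $V(a)$ relative to $S(a)$ and hence a free critical point of $J_\mu|_{S(a)}$. Thus $w$ is a normalized solution with $J_\mu(w)=m(a)$, and consequently the ground state energy $c_g:=\inf\{J_\mu(v):v\in S(a),\,(J_\mu|_{S(a)})'(v)=0\}$ satisfies $c_g\le m(a)<0$. Now let $u$ be an arbitrary normalized ground state; then $J_\mu(u)=c_g<0$, and Lemma~\ref{Lem5-m(a)} again rules out $u\in\partial V(a)$. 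It therefore remains only to exclude the possibility $\|u\|_0>\rho_0$.

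Here I would bring in the scaling $\mathcal{H}$ and study the fiber map $\psi_u(t):=J_\mu(\mathcal{H}(u,t))$. Because $t\mapsto\mathcal{H}(u,t)$ is a curve in $S(a)$ through $u$ at $t=0$ and $u$ is a constrained critical point, $\psi_u'(0)=0$. With $A:=\|u\|_0^2$, $B:=\tfrac{\mu}{q}|u|_q^q$ and $C:=\tfrac1p|u|_p^p$ (all positive),
\[
\psi_u(t)=\tfrac12 A e^{\frac{4}{3}t}-Be^{(q-2)t}-Ce^{(p-2)t},
\]
so $\psi_u(t)\to 0^-$ as $t\to-\infty$ (the slowest-decaying term is $-Be^{(q-2)t}$, since $q-2<\tfrac43<p-2$) and $\psi_u(t)\to-\infty$ as $t\to+\infty$. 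Since $\|\mathcal{H}(u,t)\|_0=e^{\frac{2}{3}t}\|u\|_0$ decreases continuously from $\|u\|_0>\rho_0$ to $0$ as $t\to-\infty$, the orbit meets $\partial V(a)$ at some $t_\partial<0$, whence $\psi_u(t_\partial)\ge\inf_{\partial V(a)}J_\mu>0$, while $\psi_u(0)=J_\mu(u)<0$.

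The hard part will be turning these sign conditions into a contradiction, and for this I would analyse the critical points of $\psi_u$. Factoring $\psi_u'(t)=e^{(q-2)t}\phi(t)$ with
\[
\phi(t)=\tfrac23 A e^{(\frac{10}{3}-q)t}-(q-2)B-(p-2)C\,e^{(p-q)t},
\]
and using $0<\tfrac{10}{3}-q<p-q$ (which holds precisely because $q<\tfrac{10}{3}<p$), the function $\phi$ is strictly increasing then strictly decreasing, with $\phi(-\infty)=-(q-2)B<0$ and $\phi(+\infty)=-\infty$; hence $\phi$, and so $\psi_u'$, has at most two zeros. On the other hand, the three limits above show that $\psi_u$ attains a strictly positive global maximum at some interior critical point $t_{\max}$, and $t_{\max}\ne 0$ because $\psi_u(0)<0$. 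Thus $\psi_u$ has exactly two critical points, a local minimum $t_1$ and a local maximum $t_2=t_{\max}$ with $t_1<t_2$; since $0$ is a critical point with $\psi_u(0)<0$, necessarily $0=t_1$. Then $\psi_u$ is strictly decreasing on $(-\infty,0)$, so together with $\psi_u(t)\to 0^-$ this forces $\psi_u(t)<0$ for all $t<0$, contradicting $\psi_u(t_\partial)>0$ with $t_\partial<0$. Therefore $\|u\|_0\le\rho_0$, and as $u\notin\partial V(a)$ we conclude $\|u\|_0<\rho_0$, i.e. $u\in V(a)$.
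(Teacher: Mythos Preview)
Your proof is correct and follows essentially the same approach as the paper: both analyze the scaling fiber map $\psi(t)=J_\mu(\mathcal{H}(\cdot,t))$, use that a normalized ground state corresponds to a critical point of this map, and exploit the sign pattern $\psi\to 0^-$ at $-\infty$, $\psi>0$ on $\partial V(a)$, $\psi\to-\infty$ at $+\infty$ to pin the ground state to the local-minimum critical point inside $V(a)$. The main cosmetic difference is that the paper normalizes to $\|v\|_0=1$ and argues directly, while you center the fiber at $u$ and argue by contradiction; your explicit factorization $\psi_u'(t)=e^{(q-2)t}\phi(t)$ and unimodality analysis of $\phi$ makes the ``exactly two critical points'' step more rigorous than the paper's sketch.
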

\begin{proof}
    It is well known that all critical points of $J_\mu$ restricted to $S(a)$ belong to the Pohozaev's type set
    $$
    \mathcal{P}_{\mu,a}:=\{u \in S(a): {P}_{\mu}(u)=0\}
    $$
    where
    $$
    {P}_{\mu}(u):=\|u\|_0^2-\frac{\mu (q-2)}{ q}|u|_q^q-\frac{(p-2)}{p}|u|_p^p.
    $$
    A direct calculation shows that, for any $v \in S(a)$ and any $t \in(-\infty, \infty)$,
    \begin{equation}\label{Eq-psi_v}
        \psi_v^{\prime}(t)={P}_{\mu}\left(v_t\right)=
        \frac{2}{3}e^{\frac{4}{3}t}\|u\|_0^2-
        \frac{\mu (q-2)}{q}e^{(q-2)t}|u|_q^q-\frac{(p-2)}{p}e^{(p-2)t}|u|_p^p,
    \end{equation}
where $\psi_v^{\prime}$ denotes the derivative of $\psi_v$ with respect to $t \in(-\infty, \infty)$ and $ v_t(x):=e^{t}v(e^{\frac{2}{3}t}x,e^{\frac{4}{3}t}y) $. Finally, observe that any $u \in S(a)$ can be written as {$u=v_t$ with $v \in S(a)$, $\|v\|_{0}=1$ and $t \in(-\infty, \infty)$.}

Since the set $\mathcal{P}_{\mu,a}$ contains all the normalized ground state solutions (if any), we deduce from \eqref{Eq-psi_v} that if $w \in S(a)$ is a normalized ground state solution, then
    $
    {P}_{\mu}\left(w\right)=0.
    $
Thus, there exists a $v \in S(a)$, $\|v\|_{0}=1$ and a $t_0 \in(-\infty, \infty)$ such that $w=v_{t_0}$, $J_\mu(w)=\psi_v\left(t_0\right)$ and $\psi_v^{\prime}\left(t_0\right)={P}_{\mu}\left(w\right)=0$. Namely, $t_0 \in(-\infty, \infty)$ is a critical point of $\psi_v$.

    Now, since $\psi_v(t) \rightarrow 0^{-}$ and $\left\| v_t\right\|_{0} \rightarrow 0$, as $t \rightarrow 0$, and $\psi_v(t)=J_\mu\left(v_t\right) \geq 0$ when
     $$
     v_t \in \partial V(a)=\{u \in\left.S(a):\| u\|_{0}=\rho_0\right\},
     $$
the function $\psi_v$ must has a first critical point $t_1 \in \mathbb{R}$ where $\psi_v'(t_1) = 0$, corresponding to a local minimum.  In particular, $v_{t_1} \in V(a)$ and $J_\mu\left(v_{t_1}\right)=\psi_v\left(t_1\right)<0$. Also, from $\psi_v\left(t_1\right)<0, \psi_v(t) \geq 0$ when $v_t \in \partial V(a)$ and $\psi_v(t) \rightarrow-\infty$ as $t \rightarrow +\infty$, {the function $\psi_v$ has a second critical point $t_2 > t_1$ where $\psi_v'(t_2) = 0$, corresponding to a local maximum.} Since $v_{t_2}$ satisfies $J_\mu\left(v_{t_2}\right)=\psi_v\left(t_2\right) \geq 0$, we have that $m(a) \leq J_\mu\left(v_{t_1}\right)<J_\mu\left(v_{t_2}\right)$. {Thus, since $m(a)$ is attained, $v_{t_2}$ cannot be a normalized ground state solution. Hence, $t_0=t_1$ and $w=v_{t_1} \in V(a)$, completing the proof. }
     \end{proof}
{Our next goal is to establish several technical lemmas to prove the compactness of the minimizing sequences.}
\begin{lemma}\label{Lem4-subadd}
    {For any $a \in\left(0, a_0\right)$ and $b \in(0, a)$,  we have $m(a) \leq m(b)+m(\sqrt{ a^2-b^2})$
with  strict inequality if either $m(b)$ or $m(\sqrt{ a^2-b^2})$ is attained.}
\end{lemma}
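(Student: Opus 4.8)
The plan is to reduce the claimed subadditivity to a monotonicity property of the rescaled level $a \mapsto m(a)/a^2$. Concretely, I would first prove the auxiliary inequality
$$
\frac{m(\alpha)}{\alpha^2} \le \frac{m(\beta)}{\beta^2}, \qquad 0 < \beta \le \alpha < a_0,
$$
with strict inequality whenever $\beta < \alpha$ and $m(\beta)$ is attained. Granting this, the lemma follows by a short computation: writing $c = \sqrt{a^2-b^2}$ and assuming (without loss of generality) $b \ge c$, the case $\beta=b<\alpha=a$ gives $m(a) \le \frac{a^2}{b^2}m(b) = m(b) + \frac{c^2}{b^2}m(b)$, while the case $\beta=c \le \alpha=b$ gives $\frac{c^2}{b^2}m(b) \le m(c)$; adding the two yields $m(a) \le m(b)+m(c)$. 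Strictness is inherited: if $m(b)$ is attained the first step is strict (since $b<a$ always), if $m(c)$ is attained with $c<b$ the second step is strict, and if $c=b$ then attainment of $m(c)$ is attainment of $m(b)$ and the first step again applies.

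The engine behind the monotonicity is the multiplicative scaling $u \mapsto \theta u$, $\theta>1$, which maps $S(\beta)$ into $S(\alpha)$ with $\alpha = \theta\beta$ and satisfies, for every $u \neq 0$,
$$
J_\mu(\theta u) = \frac{\theta^2}{2}\|u\|_0^2 - \frac{\mu\theta^q}{q}|u|_q^q - \frac{\theta^p}{p}|u|_p^p < \theta^2 J_\mu(u),
$$
the strict inequality coming from $q,p>2$ and the positivity of the nonlinear terms. The main obstacle is that this scaling enlarges the norm, $\|\theta u\|_0 = \theta\|u\|_0$, so $\theta u$ need not lie in $V(\alpha) = S(\alpha)\cap B_{\rho_0}$. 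To control this I would first record an a priori bound: combining \eqref{eqJF} with Lemma \ref{lem5-g} and the fact (noted just before Lemma \ref{Lem5-m(a)}) that $h(a,\rho_0)>0$ for $a\in(0,a_0)$, any $u \in S(a)$ with $J_\mu(u)\le 0$ must satisfy $\|u\|_0 \le \rho_-(a)$, where $\rho_-(a) < \rho_0$ is the smaller zero of $\rho \mapsto h(a,\rho)$. Since $m(\beta)<0$ by Lemma \ref{Lem5-m(a)}, every sufficiently good minimizing element $u$ of $m(\beta)$ has $\|u\|_0 \le \rho_-(\beta) < \rho_0$. Hence for $\theta$ close enough to $1$ (precisely $\theta < \rho_0/\rho_-(\beta)$) one has $\theta u \in V(\alpha)$, and then $m(\alpha) \le J_\mu(\theta u) < \theta^2 J_\mu(u)$; letting the energy of $u$ tend to $m(\beta)$ gives the local inequality $m(\alpha) \le \frac{\alpha^2}{\beta^2}m(\beta)$ for $\alpha$ in a right-neighborhood of $\beta$, strict when $m(\beta)$ is attained (by using the minimizer itself).

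Finally I would upgrade this local statement to the global monotonicity on $(0,a_0)$ by a standard continuation argument: covering any interval $[\beta,\alpha]$ by finitely many such right-neighborhoods and composing the resulting inequalities, transitivity yields $m(\alpha)/\alpha^2 \le m(\beta)/\beta^2$ for all $\beta\le\alpha<a_0$, the first step being strict when $m(\beta)$ is attained so that the whole chain is strict. The delicate point throughout is the ball constraint: pure mass scaling is the only homogeneity that genuinely separates the two powers $q$ and $p$, yet it drives functions toward and possibly past $\partial V(a)$, so the quantitative bound $\|u\|_0\le\rho_-(a)<\rho_0$ for negative-energy functions — which keeps near-minimizers strictly inside the ball and renders small scalings admissible — is exactly what makes the argument go through. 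The tempting alternative of flowing back into the ball with $\mathcal{H}(\cdot,t)$ is available but raises the energy toward $0$ (recall $\psi_u(t)\to 0^-$ as $t\to-\infty$ in the proof of Lemma \ref{Lem4-ground}), so it does not by itself yield the sharp constant $\theta^2$.
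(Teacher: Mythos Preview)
Your reduction to the monotonicity of $a \mapsto m(a)/a^2$ is exactly what the paper does; the difference lies in how you prove $m(\theta\beta) \le \theta^2 m(\beta)$. You use the amplitude scaling $u \mapsto \theta u$, which yields the sharp inequality $J_\mu(\theta u) < \theta^2 J_\mu(u)$ but inflates $\|\cdot\|_0$ by the full factor $\theta$, forcing you into the a priori bound $\|u\|_0 \le \rho_-(\beta)$ and then a chaining argument. The paper instead uses the anisotropic dilation $v(x,y) = u(x/\theta^{2/3}, y/\theta^{4/3})$: one computes $|v|_r^r = \theta^2 |u|_r^r$ for every $r$, while $\|v\|_0^2 = \theta^{2/3}\|u\|_0^2$. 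Thus the gradient term carries only the factor $\theta^{2/3} < \theta^2$, so $J_\mu(v) < \theta^2 J_\mu(u)$ is again immediate, and the sublinear growth of $\|v\|_0$ makes the ball constraint $\|v\|_0 < \rho_0$ hold for all $\theta \in (1, a/b]$ in a single step (using $\|u\|_0 < \tfrac{b}{a}\rho_0$ from Lemma~\ref{lem5-f}).

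Your route is valid, but the chaining is in fact unnecessary even with amplitude scaling: the full strength of Lemma~\ref{lem5-f} (applied with $a_1 = a_0$, $\rho_1 = \rho_0$, $a_2 = \beta$) gives $\rho_-(\beta) \le \tfrac{\beta}{a_0}\rho_0$, hence $\rho_0/\rho_-(\beta) \ge a_0/\beta > \alpha/\beta$ for every $\alpha < a_0$, so your ``local'' step is already global. As written, your continuation sketch is loose --- half-open right-neighborhoods do not form an open cover, so compactness alone does not guarantee a finite subcover --- and repairing it would require a uniform lower bound on the step size (obtainable from the monotonicity $\rho_-(\gamma) \le \rho_-(\alpha)$ for $\gamma \le \alpha$, since $h$ is nonincreasing in its first argument). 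The cleaner fix is simply to invoke Lemma~\ref{lem5-f} as above and drop the chaining entirely.
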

\begin{proof}
    {Note that, fixed $b \in(0, a)$, it is sufficient to show that
    \begin{equation}\label{Eq-mtha}
        m(\theta b) \leq \theta^2 m(b),\,\, \forall \theta \in\left(1, \frac{a}{b}\right],
    \end{equation}
    with strict inequality  if $m(b)$ is attained.} Indeed, if \eqref{Eq-mtha} holds, one has
    \begin{equation}
        \begin{aligned}
            m(a) =\frac{a^2-b^2}{a} m(a)+\frac{b^2}{a^2} m(a)&=\frac{a^2-b^2}{a^2} m\left(\frac{a}{\sqrt{a^2-b^2}}(\sqrt{a^2-b^2})\right)+\frac{b^2}{a^2} m\left(\frac{a}{b} b\right) \\
            & \leq m(\sqrt{a^2-b^2})+m(b)
        \end{aligned}
    \end{equation}
    with strict inequality if $m(b)$ is attained.

{Now, for fixed $b \in(0, a)$, we prove \eqref{Eq-mtha}. By Lemma \ref{Lem5-m(a)}, for any $\varepsilon>0$ sufficiently small, there exists  $u \in V(b)$ such that}
    \begin{equation}\label{Eq-Phiu<0}
        J_\mu(u) \leq m(b)+\varepsilon \quad \text { and } \quad J_\mu(u)<0.
    \end{equation}
    By Lemma \ref{lem5-f}, $h(b, \rho) \geq 0$ for any $\rho \in\left[ \frac{b}{a} \rho_0, \rho_0\right]$. Hence, we can deduce from Lemma \ref{Lem5-m(a)} and \eqref{Eq-Phiu<0} that
    $$
    \| u\|_{0}<\frac{b}{a} \rho_0.
    $$
    Define $v(x, y):=u\left(x / \theta^\frac{2}{3}, y / \theta^\frac{4}{3}\right) $. Since  $|v|_2=\theta|u|_2=\theta b$, we have
{   $$
    \| v\|_{0}^{2}=\theta^{2/3}\| u\|_{0}^{2}<\theta^{2/3}\left(\frac{b}{a} \right)^{2}\rho_0^{2}\leq\rho_0^{2},
    $$}
    so $v \in V(\theta b)$. We can write

{   $$
    \begin{aligned}
        m(\theta b) & \leq J_\mu(v)\\
        &=\frac{1}{2} \theta^{2/3}\|u\|_0^2-\frac{\mu}{q} \theta^{2}|u|_q^q-\frac{1}{p} \theta^{2}|u|_{p}^{p} \\
        & <\frac{1}{2} \theta^{2}\|u\|_0^2-\frac{\mu}{q} \theta^{2}|u|_q^q-\frac{1}{p} \theta^{2}|u|_{p}^{p}\\
        &=\theta^{2} J_\mu(u)\\
        & \leq \theta^2(m(b)+\varepsilon)
    \end{aligned}
    $$}
    Since $\varepsilon>0$ is arbitrary, we obtain that $m(\theta b) \leq \theta m(b)$. {If $m(b)$ is attained,  we can set $\varepsilon=0$ in \eqref{Eq-Phiu<0}, yielding strict inequality. This completes the proof.}
    \end{proof}
    \begin{lemma}\label{Lem4-m(a)cont}
    {   The map $a \in (0, a_0) \mapsto m(a)$ is continuous.}
    \end{lemma}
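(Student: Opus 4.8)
The plan is to prove continuity by showing that $m$ is both upper and lower semicontinuous at every $a\in(0,a_0)$, using the elementary multiplicative rescaling that transports mass between the constraint spheres. Fix $a\in(0,a_0)$ and a sequence $a_n\to a$ with $a_n\in(0,a_0)$. For $u\in S(c)$ and $c'>0$ set $u^{c'}:=(c'/c)\,u$, so that $u^{c'}\in S(c')$ and $\|u^{c'}\|_0=(c'/c)\|u\|_0$; moreover each of the three terms of $J_\mu$ scales by a fixed power of $c'/c$, namely $\|u^{c'}\|_0^2=(c'/c)^2\|u\|_0^2$, $|u^{c'}|_q^q=(c'/c)^q|u|_q^q$ and $|u^{c'}|_p^p=(c'/c)^p|u|_p^p$, whence $J_\mu(u^{c'})\to J_\mu(u)$ as $c'\to c$.

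The single ingredient that makes both inequalities work is a uniform gap keeping negative-energy functions strictly inside the ball. Applying Lemma \ref{lem5-f} with $a_1=a_0$ and $a_2=a$ (recall from Lemma \ref{lem5-g} that $h(a_0,\rho_0)=0$), we obtain $h(a,\rho)\geq 0$ for all $\rho\in[\frac{a}{a_0}\rho_0,\rho_0]$. Since $J_\mu(u)\geq h(a,\|u\|_0)\|u\|_0^2$ by \eqref{Eq-Phi>f}, any $u\in V(a)$ with $J_\mu(u)<0$ must satisfy $\|u\|_0<\frac{a}{a_0}\rho_0<\rho_0$; the same bound holds on each $V(a_n)$ with $a$ replaced by $a_n$. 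This is the step I expect to carry the most weight: without it the rescaled competitors could land on or beyond $\partial V$, and the comparison $m(a_n)\leq J_\mu(u^{a_n})$ (resp.\ $m(a)\leq J_\mu(u_n^{a})$) would fail.

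For upper semicontinuity, fix $\varepsilon>0$ and choose $u\in V(a)$ with $J_\mu(u)\leq m(a)+\varepsilon$ and $J_\mu(u)<0$, which is possible since $m(a)<0$ by Lemma \ref{Lem5-m(a)}. Then $\|u\|_0<\frac{a}{a_0}\rho_0$, so $\|u^{a_n}\|_0=(a_n/a)\|u\|_0<\frac{a_n}{a_0}\rho_0<\rho_0$ for $n$ large; hence $u^{a_n}\in V(a_n)$ and $m(a_n)\leq J_\mu(u^{a_n})$. Letting $n\to\infty$ gives $\limsup_n m(a_n)\leq J_\mu(u)\leq m(a)+\varepsilon$, and $\varepsilon$ is arbitrary.

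For lower semicontinuity, pick $u_n\in V(a_n)$ with $J_\mu(u_n)\leq m(a_n)+1/n<0$. By the gap, $\|u_n\|_0<\frac{a_n}{a_0}\rho_0$, so the rescaled functions $u_n^{a}=(a/a_n)u_n\in S(a)$ satisfy $\|u_n^{a}\|_0<\frac{a}{a_0}\rho_0<\rho_0$, i.e.\ $u_n^{a}\in V(a)$. Since $\|u_n\|_0$ is bounded, and $|u_n|_q$, $|u_n|_p$ are then bounded via Lemma \ref{L1}, the difference $J_\mu(u_n^{a})-J_\mu(u_n)\to 0$. Therefore $m(a)\leq J_\mu(u_n^{a})=J_\mu(u_n)+o_n(1)\leq m(a_n)+1/n+o_n(1)$, and taking $\liminf$ yields $m(a)\leq\liminf_n m(a_n)$. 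Combining the two inequalities gives $\lim_n m(a_n)=m(a)$, which proves the continuity of $a\mapsto m(a)$ on $(0,a_0)$.
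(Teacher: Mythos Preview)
Your proof is correct and follows the same strategy as the paper's---multiplicative rescaling $u\mapsto (c'/c)\,u$ to transport between constraint spheres, together with the gap estimate from Lemma~\ref{lem5-f} to ensure the rescaled competitors remain in $V$. Your uniform bound $\|u\|_0<\tfrac{c}{a_0}\rho_0$ (applying Lemma~\ref{lem5-f} with $a_1=a_0$) actually streamlines the paper's argument, which splits into the cases $a_n\geq a$ and $a_n<a$; the only slip is the phrase ``$m(a_n)+1/n<0$'', which need not hold for small $n$, but since $m(a_n)<0$ you can simply require $J_\mu(u_n)<0$ as an additional condition when choosing $u_n$.
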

    \begin{proof}
        Let $a \in\left(0, a_0\right)$ be arbitrary and $\left(a_n\right) \subset\left(0, a_0\right)$ with $a_n \rightarrow a$ as $n\rightarrow\infty$.
        It suffices to show that
        $m\left(a_n\right) \rightarrow m(a)$.
        {From the definition of $m\left(a_n\right)$, $m\left(a_n\right)<0$ and Lemma \ref{Lem5-m(a)},} for any $\varepsilon>0$ sufficiently small, there exists $u_n \in V\left(a_n\right)$ such that
        \begin{equation}\label{Eq-Phiun<0}
            J_\mu\left(u_n\right) \leq m\left(a_n\right)+\varepsilon \quad \text { and } \quad J_\mu\left(u_n\right)<0.
        \end{equation}
        {We first show that $(u_n)$ is bounded in $X$. Since $\left\| u_n\right\|_{0}<\rho_0$, the sequences {$(\left|u_n\right|_p)$ and $(\left|u_n\right|_{q})$  are bounded  in $\mathbb{R}$,} and from \eqref{Eq-Phiun<0},  $(u_n)$ is also bounded in $X$. Define $v_n:={\frac{a}{a_n}} u_n$, so $v_n \in {S}(a)$. Next, we are going to prove that}
        \begin{equation}\label{Eq-mPhiv}
            m(a) \leq J_\mu\left(v_n\right).
        \end{equation}
        First, we consider case that $a_n \geq a$, then
        \begin{equation}
            \left\| v_n\right\|_{0}=\frac{a}{a_n}\left\| u_n\right\|_{0} \leq\left\| u_n\right\|_{0}<\rho_0.
        \end{equation}
        Since we have that $v_n \in V(a)$, \eqref{Eq-mPhiv} holds.

        Second, we consider case that $a_n \leq a$, by Lemma \ref{lem5-f},
        $ h\left(a, \rho_0\right) > 0 $.
        Due to the continuity of $\rho \rightarrow h(a, \cdot)$, we may assume there exists sufficiently small
        $\delta>0$ such that
        $$
        h\left(a, \rho\right) > 0\quad \text{for}\quad \rho \in\left[\rho_0, (1+\delta)\rho_0\right].
        $$
        Hence, we deduce from \eqref{Eq-Phi>f} and \eqref{Eq-Phiun<0} that
        $\left\| u_n\right\|_0<\frac{a_n}{a}  \rho_0$ and
        for sufficiently large $n$
        $$
        \left\| v_n\right\|_{0}=\frac{a}{a_n}\left\| u_n\right\|_{0}< \rho_0\leq(1+\delta)\rho_0.
        $$
        If $\rho_0\leq\left\|v_n\right\|_{0}\leq(1+\delta)\rho_0$, from
        \eqref{Eq-Phi>f}
        $$
        m(a_n)<0<h\left(c,\|v_n\|_{0}\right)\| v_n\|_{0}^{2}\leq J_\mu\left(v_n\right).
        $$
        If $\left\|v_n\right\|_{0}<\rho_0$, we have that $v_n \in V(a)$, so \eqref{Eq-mPhiv} holds. Based on the above two cases, we can conclude that
        \begin{equation}
            m(a) \leq J_\mu\left(v_n\right)=J_\mu\left(u_n\right)+\left(J_\mu\left(v_n\right)-J_\mu\left(u_n\right)\right)
        \end{equation}
        and
        \begin{equation}
            \begin{aligned}
                J_\mu\left(v_n\right) - J_\mu\left(u_n\right) = & -\frac{1}{2} \left( \frac{a}{a_n} - 1 \right) \left\| u_n\right\|_0^2
                -
                 + \frac{\mu}{q} \left[ \left( \frac{a}{a_n} \right)^{\frac{q}{2}} - 1 \right] \left|u_n\right|_q^q \\
                &+ \frac{1}{p} \left[ \left( \frac{a}{a_n} \right)^{\frac{p}{2}} - 1 \right] \left|u_n\right|_{p}^{p}.
            \end{aligned}
        \end{equation}
        Since $ (u_n) $ is bounded in $X$, {we infer that}
        \begin{equation}\label{Eq-m(a)<Phivn}
            m(a) \leq J_\mu\left(v_n\right)=J_\mu\left(u_n\right)+o_n(1).
        \end{equation}
        Combining \eqref{Eq-Phiun<0} and \eqref{Eq-m(a)<Phivn}, {we arrive at}
        $$
        m(a) \leq m\left(a_n\right)+\varepsilon+o_n(1) .
        $$
        Now, let $u \in V(a)$ be such that
        $$
        J_\mu(u) \leq m(a)+\varepsilon \quad \text { and } \quad J_\mu(u)<0 .
        $$
        Define $w_n:={\frac{a_n}{a}} u$, so $w_n \in S\left(a\right)$. Clearly, $\| u\|_{0}<\rho_0$ and $c_n \rightarrow c$ imply $\left\| w_n\right\|_{0}<\rho_0$ for $n$ large enough, thus $w_n \in V\left(a_n\right)$. Moreover, since $J_\mu\left(w_n\right) \rightarrow J_\mu(u)$, it follows that
        $$
        m\left(a_n\right) \leq J_\mu\left(w_n\right)=J_\mu(u)+\left(J_\mu\left(w_n\right)-J_\mu(u)\right) \leq m(a)+\varepsilon+o_n(1).
        $$
    Since $\varepsilon>0$ is arbitrary, it follows that $m\left(a_n\right) \rightarrow m(a)$, completing the proof.
    \end{proof}

{\begin{lemma} \label{5C2} Let $(u_n)\subset V(a)$ be a minimizing sequence with respect to $m(a)$ such that  $u_n \rightharpoonup u_a$ in $X$, \linebreak $u_n(x) \to u_a(x)$ a.e. in $\mathbb{R}^2$ and $u \not=0 $. Then, $u \in S(a)$, $J(u)=m(a)$ and $u_n \to u$ in $X$.
\end{lemma}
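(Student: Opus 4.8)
The plan is to follow the scheme of Lemma \ref{C2}, adapting it to the constrained local minimization problem $m(a)=\inf_{u\in V(a)}J_\mu$. Write $u$ for the weak limit (the statement's $u_a$) and set $b:=|u|_2$. Since $u\neq 0$ and $u_n\rightharpoonup u$ with $|u_n|_2=a$, Fatou's lemma gives $b\in(0,a]$, and the goal is to show $b=a$. I argue by contradiction, assuming $b<a$, and put $v_n:=u_n-u$ and $d_n:=|v_n|_2$.

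First I would record the Br\'ezis--Lieb decompositions (cf. \cite[Lemma 1.32]{Willem}) for the $L^2$-, $L^q$- and $L^p$-norms, together with the orthogonal splitting of the quadratic form coming from $u_n\rightharpoonup u$,
\begin{gather*}
|u_n|_2^2=|v_n|_2^2+|u|_2^2+o_n(1), \qquad |u_n|_r^r=|v_n|_r^r+|u|_r^r+o_n(1)\ (r=q,p), \\
\|u_n\|_0^2=\|v_n\|_0^2+\|u\|_0^2+o_n(1).
\end{gather*}
These combine to the energy splitting $J_\mu(u_n)=J_\mu(u)+J_\mu(v_n)+o_n(1)$, so that $\lim_n J_\mu(v_n)=m(a)-J_\mu(u)$ exists. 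From $|u_n|_2^2=a^2$ I get $a^2=b^2+d^2$ with $d:=\lim_n d_n=\sqrt{a^2-b^2}>0$, hence $d_n,d\in(0,a)$ for large $n$. The crux is to keep everything inside $V$: since $\|u_n\|_0^2<\rho_0^2$ and $\|u\|_0>0$ (a nonzero $u\in X$ cannot have $\|u\|_0=0$, as $u_x=0$ with $u\in L^2(\mathbb{R}^2)$ forces $u=0$), the norm splitting forces $\|v_n\|_0^2<\rho_0^2$ for large $n$ and $\|u\|_0^2\le\rho_0^2$. Thus $v_n\in V(d_n)$ and $u\in\overline{V(b)}$, and because $J_\mu>0>m(b)$ on $\partial V(b)$ (Lemma \ref{Lem5-m(a)}) while $J_\mu\ge m(b)$ on $V(b)$, I obtain $J_\mu(u)\ge m(b)$ and $J_\mu(v_n)\ge m(d_n)$.

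Letting $n\to\infty$ and using the continuity of $a\mapsto m(a)$ (Lemma \ref{Lem4-m(a)cont}) yields $m(a)=J_\mu(u)+\lim_n J_\mu(v_n)\ge m(b)+m(d)$. On the other hand Lemma \ref{Lem4-subadd} gives $m(a)\le m(b)+m(d)$, so all inequalities are equalities; in particular $J_\mu(u)=m(b)<0$, which forces $u\in V(b)$ (on $\partial V(b)$ one would have $J_\mu(u)>0$). Hence $m(b)$ is attained by $u$, and the strict form of Lemma \ref{Lem4-subadd} gives $m(a)<m(b)+m(d)$, a contradiction. Therefore $b=a$ and $u\in S(a)$.

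Finally, with $b=a$ one has $d=0$, so $v_n\to 0$ in $L^2$; applying the Gagliardo--Nirenberg inequality (Lemma \ref{L1}) to $v_n$ together with the boundedness of $(\|v_n\|_0)$ gives $v_n\to 0$ in $L^q$ and $L^p$. Consequently $J_\mu(v_n)=\tfrac12\|v_n\|_0^2+o_n(1)$, and the energy splitting yields $m(a)=J_\mu(u)+\tfrac12\lim_n\|v_n\|_0^2\ge J_\mu(u)$. Since $\|u\|_0^2<\rho_0^2$ (again $J_\mu(u)\le m(a)<0$ excludes $\partial V(a)$), we have $u\in V(a)$ and $J_\mu(u)\ge m(a)$; therefore $J_\mu(u)=m(a)$ and $\lim_n\|v_n\|_0=0$, which combined with $v_n\to 0$ in $L^2$ gives $u_n\to u$ in $X$. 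I expect the main obstacle to be the bookkeeping at the threshold $\|\cdot\|_0=\rho_0$: one must repeatedly exclude the boundary $\partial V$ for both $u$ and the remainders $v_n$ (using that $J_\mu$ is positive there while the relevant energies are negative) so that the local-minimization values $m(b)$, $m(d_n)$, $m(a)$ may legitimately be compared, and then invoke the strict subadditivity of Lemma \ref{Lem4-subadd} at exactly the right place.
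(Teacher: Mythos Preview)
Your proposal is correct and follows essentially the same route as the paper: Br\'ezis--Lieb splitting of the energy, the lower bounds $J_\mu(u)\ge m(b)$ and $J_\mu(v_n)\ge m(d_n)$, continuity of $a\mapsto m(a)$, and then the strict subadditivity of Lemma~\ref{Lem4-subadd} to derive a contradiction from $b<a$. The only cosmetic difference is that the paper argues by a case split ($J_\mu(u_a)>m(a_1)$ versus $J_\mu(u_a)=m(a_1)$) while you pass directly to equality in both lower bounds; your handling of the boundary $\partial V$ (excluding $\|u\|_0=\rho_0$ via $J_\mu>0$ there) is in fact more explicit than the paper's.
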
}

\begin{proof}
    {We aim to show that $w_n:=u_n-u_a \rightarrow 0$ in $X$}, since $u_n(x) \to u(x)$ a.e. in $\mathbb{R}^2$, one has
    \begin{equation}\label{Eq-w2}
        \left|w_n\right|_2^2=\left|u_n\right|_2^2-\left|u_a\right|_2^2+o_n(1)=a^2-\left|u_a\right|_2^2+o_n(1).
    \end{equation}
    Similarly, since  $u_n \rightharpoonup u$ in $X$, one finds
    \begin{equation}\label{Eq-nw2}
        \left\|w_n\right\|_0^2=\left\| u_n\right\|_0^2-\left\| u_a\right\|_0^2+o_n(1).
    \end{equation}
    {Following the same argument as in \eqref{BL-2}-\eqref{BL-4}, we deduce that}
    \begin{equation}\label{Eq-Phibrez} J_\mu\left(u_n\right)=J_\mu\left(w_n\right)+J_\mu\left(u_a\right)+o_n(1).
    \end{equation}
    Now, we claim that
    $$
    \left |w_n\right  |_2^2 \rightarrow 0\quad\,\,\text{as}\quad\,\, n\rightarrow\infty.
    $$
    In order to prove this, let us denote $a_1:=\left|u_a\right|_2>0$. By \eqref{Eq-w2}, if we show that $a_1=a$, then the claim follows. Assume by contradiction that $a_1<a$. In view of \eqref{Eq-w2}, \eqref{Eq-nw2}, for $n$ large enough, we have $\left|w_n\right|_2 \leq a$ and $\left\| w_n\right\|_{0} \leq\left\| u_n\right\|_{0}<\rho_0$. Hence, $w_n \in V\left(\left|w_n\right|_2  \right)$ and $J_\mu\left(w_n\right) \geq m\left(\left|w_n\right|_2 \right)$. {recalling that $J_\mu\left(u_n\right) \rightarrow m(a)$ in \eqref{Eq-Phibrez}, we arrive at}
    \begin{equation}\label{Eq-mcc}
        m(a)=J_\mu\left(w_n\right)+J_\mu\left(u_a\right)+o_n(1) \geq m\left(\left|w_n\right|_2\right)+J_\mu\left(u_a\right)+o_n(1).
    \end{equation}
    Since the map $a \mapsto m(a)$ is continuous (see Lemma \ref{Lem4-m(a)cont}), \eqref{Eq-w2}  gives
    \begin{equation}\label{Eq-mc1}
        m(a) \geq m\left(\sqrt{a^2-a_1^2}\right)+J_\mu\left(u_a\right).
    \end{equation}
    We also have that $u_a \in V\left(a_1\right)$, which implies that $J_\mu\left(u_a\right) \geq m\left(a_1\right)$. If $J_\mu\left(u_a\right)>m\left(a_1\right)$, then it follows from \eqref{Eq-mc1} and Lemma \ref{Lem4-subadd} that
    $$
    m(a)>m\left(\sqrt{a^2-a_1^2}\right)+m\left(a_1\right) \geq m\left(\sqrt{a^2-a_1^2}+a_1\right)=m(a)
    $$
    which is impossible. Hence, we have $J_\mu\left(u_a\right)=m\left(a_1\right)$, that is, $u_a$ is a local minimizer on $V\left(a_1\right)$. Thus, using Lemma \ref{Lem4-subadd} with the strict inequality, we deduce from \eqref{Eq-mc1} that
    $$
    m(a) \geq m\left(\sqrt{a^2-a_1^2}\right)+J_\mu\left(u_a\right)=m\left(\sqrt{a^2-a_1^2}\right)+m\left(a_1\right)>m\left(\sqrt{a^2-a_1^2}+a_1\right)=m(a)
    $$
    which is impossible. Thus,  $ \left|w_n\right|_2^2 \rightarrow 0 $ and from \eqref{Eq-w2} it follows that $\left|u_a\right|_2^2=a^2$.
    It follows immediately, by Lemma \ref{Gagliardo} that
    $$
    |w_n|^{q}_{q} \leq C_q|w_n|^{(1-\beta )q}_{2}\left(\int_{\mathbb{R}^2}\left(\left|(w_n)_x\right|^2+\left|D_x^{-1} (w_n)_y\right|^2\right) d xdy\right)^{\frac{q\beta}{2}}=o_n(1).
    $$
    Finally, from \eqref{Eq-mcc}, we obtain
    $$
    m(a)=J_\mu\left(w_n\right)+J_\mu\left(u_a\right)+o_n(1) \geq \frac{1}{2}\left\|w_n\right\|_0^2+m(a)+o_n(1),
    $$
    which indicates $\left\| w_n\right\|_0 =o_n(1)$. Hence,  $w_n \rightarrow 0$ in $X$ and
    $$u_n\left(x-y_n\right) \rightarrow u_a \not \equiv 0 \text{ in } X.$$
\end{proof}

\begin{lemma}\label{Lem5-vanish}
    For any $a\in\left(0, a_0\right)$, let $(u_n) \subset B_{\rho_0}$ satisfy $\left|u_n\right|_2 \rightarrow a$ and $J_\mu\left(u_n\right) \rightarrow m(a)$. Then, for each $R>0$ fixed, there exist  $\beta_1>0$ and a sequence $(x_n,y_n) \subset \mathbb{R}^2$ such that
    \begin{equation}\label{Eq-beta1}
        \int_{B_R\left(x_n, y_n\right)}\left|u_n\right|^2 d x \geq \beta_1>0.
    \end{equation}
\end{lemma}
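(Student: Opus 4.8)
The plan is to argue by contradiction, combining a Lions-type vanishing lemma for the space $X$ with the strict negativity of the local minimum level established in Lemma~\ref{Lem5-m(a)}.

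First I would record that $(u_n)$ is bounded in $X$: since $(u_n)\subset B_{\rho_0}$ we have $\|u_n\|_0^2<\rho_0$, while $|u_n|_2\to a$ gives that $|u_n|_2$ is bounded; hence $\|u_n\|^2=\|u_n\|_0^2+|u_n|_2^2$ is bounded. Now fix $R>0$ as in the statement and suppose, for contradiction, that the conclusion fails. Then no lower bound of the required type can be extracted along any subsequence, which means
\[
\sup_{(x,y)\in\mathbb{R}^2}\int_{B_R(x,y)}|u_n|^2\,dx\,dy\to 0\qquad\text{as } n\to+\infty.
\]
Applying the Lions-type result for $X$ from Willem \cite[Lemma 7.4]{Willem}, exactly as in the proof of Theorem~\ref{Th1}, this vanishing forces $|u_n|_s\to 0$ for every $s\in(2,6)$; in particular $|u_n|_q\to 0$ and $|u_n|_p\to 0$.

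With this at hand, the definition of $J_\mu$ yields
\[
J_\mu(u_n)=\frac{1}{2}\|u_n\|_0^2-\frac{\mu}{q}|u_n|_q^q-\frac{1}{p}|u_n|_p^p=\frac{1}{2}\|u_n\|_0^2+o_n(1)\geq o_n(1),
\]
so that $\liminf_{n\to+\infty}J_\mu(u_n)\geq 0$. On the other hand, the hypothesis $J_\mu(u_n)\to m(a)$ together with Lemma~\ref{Lem5-m(a)} gives $J_\mu(u_n)\to m(a)<0$, a contradiction. Therefore vanishing cannot occur, and we obtain $\beta_1>0$ and a sequence $(x_n,y_n)\subset\mathbb{R}^2$ (after passing to a subsequence) satisfying \eqref{Eq-beta1}.

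The only genuinely substantive input is the Lions-type lemma in the anisotropic space $X$, which replaces the usual concentration-compactness step and is precisely what compensates for the absence of a useful radial subspace noted in the introduction; apart from invoking it, the argument is the standard ``vanishing contradicts a strictly negative energy level'' dichotomy, where the sign condition $m(a)<0$ is available only because $a<a_0$.
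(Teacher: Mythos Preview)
Your proof is correct and follows essentially the same approach as the paper: argue by contradiction, use boundedness of $(u_n)$ in $X$, apply the Lions-type vanishing lemma to get $|u_n|_q,|u_n|_p\to 0$, and then contradict $m(a)<0$. The only cosmetic difference is that the paper cites Lions's original lemma while you cite Willem's version adapted to $X$ (as was done in the proof of Theorem~\ref{Th1}), which is arguably the more precise reference here.
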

\begin{proof}
    We assume by contradiction that \eqref{Eq-beta1} does not hold. Since $(u_n)  \subset B_{\rho_0}$ and $\left|u_n\right|_2 \rightarrow a$, the sequence $(u_n) $ is bounded in $X$. By {
    \cite[Lemma I.1]{LionsCCPLocal2}}, {we know that} $\left|u_n\right|_p \rightarrow 0$ as $n \rightarrow +\infty$ up to a translation. Thus,
    {$$
    J_\mu\left(u_n\right) =\frac{1}{2}\left\| u_n\right\|_0^2+o_n(1)\geq o_n(1),
    $$}
{contradicting the fact that $m(a)<0$ from Lemma \ref{Lem5-m(a)}. Hence, the result follows.}
\end{proof}
Next, we prove the first part of Theorem \ref{Th3}.

\noindent \textbf{Proof of the first part of Theorem \ref{Th3}:}
{Let $ (u_n)\subset V(a) $  be a minimizing sequence for $m(a)$
By Lemma \ref{Lem5-vanish}, there exists a sequence $(x_n,y_n) \subset \mathbb{R}^2$ such that
\begin{equation}\label{Eq-weaku}
    u_n\left(x+x_n,y+y_n\right) \rightharpoonup u_a \neq 0 \quad \text { in } X .
\end{equation}
From Lemma \ref{5C2}, we deduce that, $J({u}_a)=m(a)$ and $u_n\left(x+x_n,y+y_n\right) \to {u}_a$ in $X$.
 By Lemma \ref{Lem4-ground}, this minimizer $u_a$ is a normalized ground state solution for problem \eqref{Equation},  and any normalized ground state solution for problem \eqref{Equation}  belongs to $V(a)$.

{Now, we are ready to prove the second part of Theorem \ref{Th3}. More precisely, we are going to show that there exists a sequence $(a_n) \subset (0, a_0)$ with $a_n \to 0$ as $n \to +\infty$, such that for each $a = a_n$, the problem \eqref{Equation} admits a second solution with positive energy. This completes the proof of Theorem \ref{Th3}. To this end, we first present the following lemma, which plays a crucial role in showing that the second solution has positive energy.}

\begin{lemma}\label{Mp}
    For any $\mu>0$ and $a \in\left(0, a_0\right)$, there exists $\kappa_{\mu, a}>0$ such that
    \begin{equation}\label{Eq5-3}
        M(a):=\inf _{\gamma \in \Gamma_{ a}} \max _{t \in[0,1]} J_\mu(\gamma(t)) \geq \kappa_{\mu,a}>\sup _{\gamma \in \Gamma_{ a}} \max \left\{J_\mu(\gamma(0)), J_\mu(\gamma(1))\right\}
    \end{equation}
    where
    \begin{equation}\label{Eq5-Gamma}
        \begin{aligned}
            &\Gamma_{ a}=\left\{\gamma \in \mathcal{C}([0,1], {S}(a): \gamma(0)=u_a, J_\mu(\gamma(1))<2 m(a)\right\} .
        \end{aligned}
    \end{equation}
\end{lemma}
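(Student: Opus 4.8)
The plan is to read \eqref{Eq5-3} as the statement that $J_\mu|_{S(a)}$ possesses a mountain pass geometry around the local minimizer $u_a$, the ``mountain range'' being the sphere $\partial V(a)$. Accordingly, I would define
$$
\kappa_{\mu,a}:=\inf_{u\in\partial V(a)}J_\mu(u),
$$
which is strictly positive by Lemma \ref{Lem5-m(a)}. The argument then splits into two independent observations: every admissible path must cross $\partial V(a)$, so its maximum dominates $\kappa_{\mu,a}$ (this gives the left inequality $M(a)\ge\kappa_{\mu,a}$); and both endpoints of every admissible path sit strictly below $\kappa_{\mu,a}$, which makes the right inequality immediate.

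For the lower bound, fix an arbitrary $\gamma\in\Gamma_a$. Since $u_a$ is a normalized ground state, Lemma \ref{Lem4-ground} places it in the open set $V(a)$, so $\|\gamma(0)\|_0=\|u_a\|_0<\rho_0$. For the other endpoint, $J_\mu(\gamma(1))<2m(a)$, and because $m(a)<0$ (Lemma \ref{Lem5-m(a)}) we have $2m(a)<m(a)=\inf_{V(a)}J_\mu$, whence $\gamma(1)\notin V(a)$; moreover $J_\mu(\gamma(1))<0\le\kappa_{\mu,a}$ rules out $\gamma(1)\in\partial V(a)$ as well, so $\gamma(1)\notin\overline{V(a)}$ and $\|\gamma(1)\|_0>\rho_0$. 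The map $t\mapsto\|\gamma(t)\|_0$ is continuous on $[0,1]$ (as $\gamma$ is continuous into $X$ and $\|\cdot\|_0$ is continuous), so the intermediate value theorem yields $t^\ast\in(0,1)$ with $\|\gamma(t^\ast)\|_0=\rho_0$, i.e.\ $\gamma(t^\ast)\in\partial V(a)$. Consequently
$$
\max_{t\in[0,1]}J_\mu(\gamma(t))\ge J_\mu(\gamma(t^\ast))\ge\kappa_{\mu,a},
$$
and taking the infimum over $\gamma\in\Gamma_a$ gives $M(a)\ge\kappa_{\mu,a}$.

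For the endpoint control, note that for every $\gamma\in\Gamma_a$ one has $J_\mu(\gamma(0))=J_\mu(u_a)=m(a)$ and $J_\mu(\gamma(1))<2m(a)<m(a)$, so $\max\{J_\mu(\gamma(0)),J_\mu(\gamma(1))\}=m(a)$ for each path; hence the supremum over $\Gamma_a$ equals $m(a)<0<\kappa_{\mu,a}$, which is the asserted strict inequality. To keep the statement non-vacuous I would also verify $\Gamma_a\neq\varnothing$ by taking the scaled path $\gamma(s)=\mathcal{H}(u_a,sT)$: since $\psi_{u_a}(t)\to-\infty$ as $t\to+\infty$ (the term $-\tfrac1p e^{(p-2)t}|u_a|_p^p$ dominates because $p-2>\tfrac43$), one chooses $T>0$ large enough that $J_\mu(\gamma(1))<2m(a)$.

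The only genuinely delicate point is the strict separation $\|u_a\|_0<\rho_0<\|\gamma(1)\|_0$, which forces a transversal crossing of $\partial V(a)$ in the interior of $[0,1]$. This rests exactly on Lemma \ref{Lem4-ground} (to locate $u_a$ strictly inside $V(a)$) together with the sign chain $m(a)<0<\kappa_{\mu,a}$ from Lemma \ref{Lem5-m(a)} (to push $\gamma(1)$ strictly outside $\overline{V(a)}$). Once this separation is secured, the crossing is automatic by continuity and everything else is routine.
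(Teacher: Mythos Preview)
Your proof is correct and follows essentially the same approach as the paper: define $\kappa_{\mu,a}=\inf_{\partial V(a)}J_\mu>0$ via Lemma \ref{Lem5-m(a)}, use that $\gamma(0)=u_a\in V(a)$ while $J_\mu(\gamma(1))<2m(a)<m(a)$ forces $\gamma(1)\notin V(a)$, and apply the intermediate value theorem to obtain a crossing of $\partial V(a)$. Your additional verification that $\Gamma_a\neq\varnothing$ and the explicit exclusion of $\gamma(1)\in\partial V(a)$ are minor refinements that the paper omits but which do no harm.
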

\begin{proof}
    {Define  $\kappa_{\mu,a}:=\inf _{u \in \partial V(a)} J_\mu(u)$. By \eqref{eq-m<0}, $\kappa_{\mu,a}>0$. For any $\gamma \in \Gamma_{ a}$, since $\gamma(0)=u_a \in V(a) \backslash(\partial V(a))$ and $J_\mu(\gamma(1))<2 m(a)$, we have  $\gamma(1) \notin V(a)$. By the continuity of $\gamma(t)$ on $[0,1]$, there exists a $t_0 \in(0,1)$ such that $\gamma\left(t_0\right) \in \partial V(a)$, and so
 $$
 \max _{t \in[0,1]} J_\mu(\gamma(t)) \geq  J_\mu(\gamma(t_0))\geq\kappa_{\mu,a}.
 $$
 Since $J_\mu(\gamma(0)) = J_\mu(u_a) = m(a) < 0$ and $J_\mu(\gamma(1)) < 2 m(a) < 0$, we have $\kappa_{\mu,a} > \max \{ J_\mu(\gamma(0)), J_\mu(\gamma(1)) \}$, proving \eqref{Eq5-3}.}
\end{proof}

\noindent \textbf{Proof of the second part of Theorem \ref{Th3}:}
        By Lemma \ref{Mp} and \cite[Theorem 2.5]{Chen2024Another}, there exists a (PS) sequence $(u_n) \subset S(a)$ associated with the mountain pass level $M(a)>0$, that is,
        \[
        J_\mu(u_n) \to M(a) \quad \text{and} \quad J_\mu'|_{S(a)}(u_n) \to 0 \quad \text{as} \quad n \to +\infty.
        \]
        It is easy to verify that the conclusion of Lemma~\ref{profile} remains valid for the functional $J_\mu$. Consequently, there exist $\ell \in \mathbb{N}$ and sequences $\left(\widetilde{u}_i\right)_{i=0}^{\ell} \subset X$, $\left((x_n^i,y_n^i)\right)_{i=0}^{\ell} \subset \mathbb{R}^2$ with $(x_n^0,y_n^0)=(0,0)$, such that
        \[
        (x_n^i - x_n^j)^2 + (y_n^i - y_n^j)^2 \to +\infty \quad \text{as} \quad n \to +\infty \quad \text{for} \quad i \neq j,
        \]
        and, up to a subsequence, {the following hold for each $i\in \{0,1,\ldots,\ell\}$},
        \begin{equation}\label{eq:com122}
            \begin{aligned}
                u_n(\cdot - x_n^i, \cdot - y_n^i) \rightharpoonup \widetilde{u}_i \text{ in } X, \quad
                J_\mu'|_{S(b_i)}(\widetilde{u}_i) = 0, \quad \text{where } b_i = |\widetilde{u}_i|_2, \text{ and } a^2 = \sum_{i=0}^{\ell} b_i^2,
            \end{aligned}
        \end{equation}
        \begin{equation}\label{eq:com11}
            \left\| u_n - \sum_{i=0}^{\ell} \widetilde{u}_i(\cdot + x_n^i, \cdot + y_n^i) \right\| \to 0 \quad \text{as } n \to \infty,
        \end{equation}
        \begin{equation}\label{eq:com13}
            \sum_{i=0}^{\ell} J_\mu(\widetilde{u}_i) = \lim_{n \to \infty} J_\mu(u_n) = M(a).
        \end{equation}
        Since $M(a) > 0$, there exists some $i_0 \in \{0,1,\ldots,\ell\}$ such that
        \[
        J_\mu(\widetilde{u}_{i_0}) > 0, \quad J_\mu'|_{S(b_{i_0})}(\widetilde{u}_{i_0}) = 0, \quad \text{with } b_{i_0} = |\widetilde{u}_{i_0}|_2 \leq a < a_0.
        \]
        Thus, problem \eqref{Equation} with $a = b_{i_0}$ has a second solution with positive energy.

{ Define $\tilde{a}_{2} = \min\{\frac{1}{2}, b_{i_0}\}$. Repeating the argument, there exists $a_2 \in (0, \tilde{a}_{2}]$ such that problem \eqref{Equation} with $a = a_2$ admits a second solution with positive energy. Inductively, or each $n \geq 2$, set $\tilde{a}_{n+1}=\min\{\frac{1}{n+1}, a_n\}$, then there exists $a_{n+1} \in (0, \tilde{a}_{n+1}]$ such that problem \eqref{Equation} with $a = a_{n+1}$ admits a second solution with positive energy. Thus, we obtain a sequence $(a_n) \subset (0, a_0)$ with $a_n \to 0$ as $n \to +\infty$, where each $a_n$ satisfies the desired property.}

\subsection*{Conflict of interest}

On behalf of all authors, the corresponding author states that there is no conflict of interest.

\subsection*{Ethics approval}
 Not applicable.

\subsection*{Data Availability Statements}
Data sharing not applicable to this article as no datasets were generated or analysed during the current study.

\subsection*{Acknowledgements}
C.O. Alves is supported by CNPq/Brazil 307045/2021-8 and Projeto Universal FAPESQ-PB 3031/2021. C. Ji is supported by National Natural Science Foundation of China (No. 12171152).

\end{document}